\documentclass[10pt]{amsart}
\usepackage{amssymb,amsmath,amsthm,amsfonts,color}
\usepackage{mathrsfs,dsfont,a4wide}

\theoremstyle{plain}
\newtheorem{theorem}{Theorem}[section]
\newtheorem{proposition}[theorem]{Proposition}
\newtheorem{lemma}[theorem]{Lemma}

\newtheorem{definition}[theorem]{Definition}
\theoremstyle{definition}
\newtheorem{remark}[theorem]{Remark}

\numberwithin{equation}{section}

\newcommand{\as}{{\mathcal A}}
\newcommand{\hs}{{\mathcal H}}

\newcommand{\fs}{{\mathcal F}}

\newcommand{\leb}{{\mathcal L}}
\newcommand{\ds}{{\mathcal D}}

\newcommand{\ms}{{\mathcal M}}

\newcommand{\bs}{{\mathcal B}}

\newcommand{\es}{{\mathcal E}}

\newcommand{\qs}{{\mathcal Q}}

\newcommand{\zs}{{\mathcal Z}}

\newcommand{\ys}{{\mathcal Y}}
\newcommand{\xs}{\boldsymbol{\mathcal X}}

\newcommand{\R}{{\mathbb R}}
\newcommand{\C}{{\mathbb C}}

\newcommand{\N}{{\mathbb N}}

\newcommand{\Z}{{\mathbb Z}}
\newcommand\II{{\mathbb I}}

\newcommand{\e}{\varepsilon}

\newcommand{\Om}{\Omega}
\newcommand{\Omb}{\overline{\Omega}}

\newcommand{\weakst}{\stackrel{\ast}{\rightharpoonup}}

\newcommand{\weakdue}{\stackrel{w\text{-}2}{\rightharpoonup}}
\newcommand{\weakduest}{\stackrel{w^*\text{-}2}{\rightharpoonup}}

\newcommand{\weak}{\rightharpoonup}
\newcommand{\wlystar}{$\text{weakly}^*$\;}

\newcommand{\wstar}{$\text{weak}^*$\;}
\newcommand{\duew}{$\text{two-scale weakly}$\;}
\newcommand{\duewst}{$\text{two-scale weakly*}$\;}

\newcommand{\hn}{\hs^{N-1}}

\newcommand{\pscal}[2]{\langle #1, #2 \rangle}

\newcommand{\eps}{\varepsilon}

\newcommand{\ph}{\varphi}

\newcommand{\bel}[1]{\begin{equation}\label{#1}}
\newcommand{\ee}{\end{equation}}

\newcommand{\pa}{\partial}

\renewcommand{\Omega}{\varOmega}
\renewcommand{\Gamma}{\varGamma}

\newcommand{\xoe}{\frac x\eps}
\newcommand{\ashom}{{\mathcal A}_{2s}^{hom}}
\newcommand{\qshom}{\qs_{2s}^{hom}}
\newcommand{\hshom}{\hs_{2s}^{hom}}
\newcommand{\eshom}{\es_{2s}^{hom}}

\newcommand\ootimes{\stackrel{\text{\it \tiny gen.}}{\otimes}}

\definecolor{verde}{RGB}{50,150,80}

\newcommand{\EEE}{\color{black}}

\title [Homogenization and Hencky plasticity]{Periodic homogenization in scalar Hencky plasticity} 
\author[G. Francfort] {Gilles Francfort} 
\address[Gilles Francfort]{Flatiron Institute}
\email[G. Francfort]{gilles.francfort@flatironinstitute.org}
\author[A. Giacomini] {Alessandro Giacomini}
\address[A. Giacomini]{DICATAM, Sezione di Matematica, Universit\`a di Brescia, 
Via Branze 43, 25123,  Brescia, Italy}
\email[A. Giacomini]{alessandro.giacomini@unibs.it}
\begin{document} 
\vskip .2truecm
\begin{abstract}
\small{This paper investigates the periodic homogenization of a two-phase elasto-plastic material in the framework of $\Gamma$-convergence. It  establishes the link between the homogenized functional and that which was previously obtained in \cite{FGhom} in the context of two-scale convergence. Although there is no gap between the two formulations and that involving two-scale has the hallmark of a (two-scale) elasto-plastic problem, we are unable to positively ascertain the elasto-plastic character of the homogenized energy except in a one-dimensional setting.
}
\end{abstract}
\maketitle
{\small \tableofcontents}

\section{Introduction}
\subsection{Introductory remarks}
In a previous work \cite{FGhom}, we investigated the periodic homogenization of a multi-phase elasto-plastic composite, each phase being endowed with an   arbitrary yield surface and elasticity. The only restriction was that the interfaces between the phases be piecewise $C^1$. 

Our main goal was to unravel the interaction between the evolution and the elasto-plastic microstructure; the reason was that the large body of engineering literature concerned with the macroscopic elasto-plastic behavior of heterogeneous materials operated exclusively under a priori assumptions on the allowed microscopic deformation mechanisms while the few mathematical works assumed strain hardening,  a regularizing mechanism under which the homogenization procedure becomes much simpler; see the references in the introduction of \cite{FGhom}.

The analysis was conducted using mainly the framework of two-scale convergence because we were unable to depart from the periodic setting. The resulting evolution is a rather complex two-scale evolution which cannot be easily reduced to an evolution solely at the  macroscopic scale. In particular   it is far from obvious whether homogenized elastic moduli and a homogenized dissipation potential can be associated with the macroscopic evolution, so   the label {\it elasto-plasticity} might be an inadequate qualification for that evolution.

\par
In this  paper, we propose to delve into this labeling issue by trying to tie the two-scale description to a purely macroscopic (one-scale) description.

Unfortunately, doing so in the original framework of \cite{FGhom} would be an overwhelming undertaking at present. This is so for two distinct reasons. First,  in the absence of a clear definition of  the macroscopic elastic and dissipation potentials, it is not possible to provide a rational framework for time-dependent evolutions. Consequently, 
we abandon the evolution aspect of the problem and zero in on what could be seen as a static version of elasto-plasticity, Hencky plasticity in the  parlance of solid mechanics. 
Then, if dealing with the problem in the vectorial case, we run into the issue of approximating  admissible configurations involving 
deviatoric plastic measures -- matrix-valued measures with $0$-trace -- while preserving the intricate kinematic structure of elasto-plasticity;  for example, localization procedures  employed in the analysis produce extra strains lacking   either the deviatoric character, or the required summability properties. This issue, whether it can be dealt with or not,  is a byproduct of the particulars of elasto-plastic kinematics and, as such,  it is  collateral to the main focus of our efforts 
which  is why we  abandon the vectorial case and restrict those to that of a scalar-valued ``displacement" field. 

Accordingly, we will decompose the gradient of the displacement $u$ into an ``elastic" field $e$ and a ``plastic" field  $p$, both $\R^N$-valued; that decomposition reads as
$$
Du=e+p.
$$
 Throughout we will use the  terminology of elasto-plasticity for denominational convenience. A mechanics oriented person would label our simplified setting  an anti-plane one.

With those caveats in mind, we establish the following results. 

Consider the two-scale homogenization process which gives rise to a two-scale description involving a functional defined over triplets $(u,E,P)$   where   $u$   is a  scalar-valued displacement (a $BV$-function), $E$  a two-scale elastic strain (an $L^2$-function depending on both the macroscopic variable $x$ -- that which spans  the physical domain-- and the microscopic variable $y\in \ys$ -- that which spans the microstructure --), and $P$  a two-scale plastic measure (a measure in both $x$ and $y$). Then, minimizing that functional over all pairs $E,P$ which are associated to the same triplet $(u,e,p)$ -- with $e,p$ the averages of $E,P$ over the microscopic variable -- through a rather subtle kinematic compatibility condition that we recall in Subsection \ref{sub:2sc-res}, we recover the homogenized energy  $\es^{hom}(u,e,p)$  of the elasto-plastic problem (see Theorem \ref{thm:hom-res}). 
The proof of that result is rather long and technical and will occupy the bulk of the paper.   It demonstrates that there is no gap, in the periodic setting, between the two-scale  and the one-scale viewpoints.
\par
 Our result is  of a similar vein to  that of \cite{FF} which  links 
 periodic homogenization of a functional with linear growth to two-scale convergence of measures. While continuity of the functional in the space variable is assumed in \cite{FF}, our setting encompasses  a natural  requirement, that of dealing with  a two-phase microstructure. This necessitates discontinuous coefficients.  Further,  we also need  to incorporate the somewhat intricate kinematic constraints of elasto-plasticity (see Remark \ref{rmk:FF}  for further details).
\par
 Information about the possible elasto-plastic structure of $\es^{hom}(u,e,p)$ can be suggested by the associated one-field  functional $\fs^{hom}(u)$ obtained by minimizing $\es^{hom}$ over all compatible triplets $(u,e,p)$ at fixed $u$. 
$\fs^{hom}$ is actually the homogenized functional for the one-field description $\fs_\e(u)$ associated with the original periodic elasto-plastic problem (see Theorem  \ref{thm:FshomEshom}). In the one-field setting, the functional to be homogenized is convex and with linear growth on $BV$; $\fs^{hom}$ can thus be described in terms of an energy density determined by a cell formula as first derived in \cite{bouchitte86}. 
\par
Using that cell formula we demonstrate that,  in one dimension, the homogenized behavior is indeed  elasto-plastic,  because it is the inf-convolution of a quadratic elastic energy with a positively one-homogeneous plastic potential. In higher dimensions, we can only produce energetic bounds on the homogenized energy, and this under some additional assumptions. The bounding energies are  indeed elasto-plastic, but it is unclear whether this is true of  the actual homogenized energy. 

 The use of the one-field homogenized functional in investigating the elasto-plastic character of the homogenized functional reveals our current failure in exploiting the link between two-scale homogenization and its one-scale analogue provided by Theorem \ref{thm:hom-res}. This is so because we lack a cell formula in that framework. However, the appeal to  a cell formula in the one-field framework fails to adjudicate the question at hand.   We should note that a negative result would deliver   a decisive blow \EEE to a large body of engineering literature where the goal is to discover the macroscopic parameters that could characterize the macroscopic elasto-plastic behavior because it would become impossible to define what the plastic strain is at the macroscopic level. There is however a two-scale plastic strain as we demonstrated in \cite{FGhom} and forcefully confirmed through Theorem \ref{thm:hom-res} of this contribution.

\par
The paper is organised as follows. 
\par
In Section \ref{sec:Hencky-het}, we introduce, in the case of a heterogeneous material made of two phases (the only setting we will address in this work),  the elasto-plastic formulation,  a three-field formulation in terms of the displacement field $u$, the elastic strain $e$, and the plastic strain $p$, the latter two being $\R^N$-vectors in this scalar setting and then
 set up the periodic problem.

Section \ref{sec:twoscale} has a first subsection
devoted to a brief recall of the relevant results in \cite{FGhom} while its second subsection goes to the heart of the argument. It is a kind of two-scale $\Gamma$-limit process which involves the two-scale homogenized energies arising out of the homogenization results of \cite{FGhom}. The $\Gamma\text{-}\liminf$-part of the argument is immediate in view of the previously obtained results recalled in Subsection  \ref{sub:2sc-res}.
The effort lies in establishing the $\Gamma\text{-}\limsup$ which leads to Theorem \ref{thm:2sc-gl}. Then the final subsection establishes the homogenization result  (Theorem \ref{thm:hom-res}).  In the approximation arguments, we will need to deal with distributions and measures on the periodic torus: we collect  the relevant results in the Appendix.

Section \ref{sec:notplastic} introduces the one-field formulation  and its homogenization and  demonstrates that  minimizing the two-scale homogenized kinematics over all two-scale elastic and plastic strains permits to recover the homogenized one-field functional, at least in the admittedly restrictive periodic setting. This is the object of Theorem \ref{thm:FshomEshom}. 
It then  addresses  in Subsection \ref{sub:cell-form}  the issue of whether the  homogenized formulation retains an elasto-plastic character. This we only know how to do by considering the cell formula classically associated with the one-field homogenized energy and attempting to discover the underlying elasto-plastic structure. We show that it is indeed so in the one dimensional case (see Paragraph \ref{subsub:1d}). In the multi-dimensional case (Paragraph \ref{subsub:bounds})  we can only produce bounds on the homogenized energy; those bounds do retain an elasto-plastic character (see Proposition \ref{prop:bds-Whom}) at least in an isotropic setting and with a proportionality restriction between the elastic coefficients and the yield stresses. Whether the homogenized energy itself always retains that property remains open at present.

Finally, the reader will undoubtedly note that force loads are not considered in this work. As  explained in 
\cite[Remark 2.9]{francfort.giacomini}, this is no restriction,  provided that a uniform safe load condition  is satisfied; for details refer to that remark in \cite{francfort.giacomini}.

\subsection{Notation}
\label{sec:prel}

The following notation will be adopted throughout.

\vskip10pt\noindent{\bf  General notation.}
 $\II$ denotes the identity matrix on $\R^N$. For $A\subseteq\R^N$, $\chi_A$ denotes the characteristic function of $A$, {\it i.e.}, $\chi_A(x)=1$ for $x\in A$ and $\chi_A(x)=0$ for $x\not\in A$. 
 The indicator function of $A$, denoted by $\mathbf I_A$, is defined as $\mathbf I_A(x)=0$ for $x\in A$, and $\mathbf I_A(x)=+\infty$ for $x\not\in A$. 
  The symbol $\lfloor_A$ stands  for ``restricted to $A$". $B(0,r)$ and ${\overline B}(0,r)$ respectively stand for the open and closed balls of radius $r$ centered at $0$.
 Finally $\leb^N$  {stands} for the usual Lebesgue measure, while $\hn$  {denotes} the $(N-1)$ dimensional Hausdorff measure.

\vskip15pt\noindent{\bf  Measures.}
 If $E$ is a locally compact separable metric space, and $X$ a finite dimensional normed space, 
$\ms_b(E;X)$ will denote the space of finite Radon measures on $E$ with values in $X$. 
For $\mu\in \ms_b(E;X)$, we denote by $|\mu|$ its total variation and by $\displaystyle\frac{\mu}{|\mu|}$  the Radon-Nikodym derivative of $\mu$ with respect to $|\mu|$. 
The space $\ms_b(E;X)$ is the topological dual of $C^0_0(E;X^*)$, the set of continuous functions $u$ from $E$ to 
the vector dual $X^*$ of $X$ which ``vanish at the boundary", {\it i.e.}, such that for every $\e>0$ there exists 
a compact set $K\subseteq E$ with $|u(x)|<\e$ for $x\not\in {K}$;  $\mu_n\weakst \mu$ will denote the associated weak* convergence. Finally we will denote by $\ms_b^+(E)$ the space of 
positive bounded Radon measures on $E$.  
\par

\vskip10pt
We will use generalized product and disintegration of measures, for which we refer the reader to \cite[Section 2.5]{ambrosio.fusco.pallara}. Given $E,F$ locally compact separable metric spaces, and $\eta\in \ms_b^+(E)$, a map $x\mapsto \mu_x\in \ms_b(F)$ is said to be $\eta$-measurable if the map
$$
x\mapsto \mu_x(B)
$$
is $\eta$-measurable for every Borel set $B\subseteq F$. Assuming moreover that
the map $x\mapsto |\mu_x|(F)$ is $\eta$-summable, the generalized product $\eta\ootimes \mu_x\in \ms_b(E\times F)$ is defined through the equality
$$
\pscal{\eta\ootimes \mu_x}{f}:=\int_E \left( \int_F f(x,y)\,d\mu_x(y)\right)\,d\eta(x),
\qquad f\in C^0_0(E\times F).
$$

\par
Moreover (see \cite[Theorem 2.28]{ambrosio.fusco.pallara}), every $\mu\in \ms_b(E\times F)$ can be disintegrated, {\it i.e.}, it can written as a generalized product $\eta\ootimes \mu_x$. Here $\eta$ is the push forward of $|\mu|$ along the projection on $E$, {{\it i.e.}}, for every Borel set $B\subseteq E$
$$
\eta(B):=|\mu|(B\times F),
$$
while $x\mapsto \mu_x\in \ms_b(F)$ is a suitable $\eta$-measurable map.
The generalized product technique, and the associated disintegration result, are easily extended to the case of vector valued finite Radon measure.

\vskip10pt
\noindent{\bf  Functions of bounded variation.}
{Let $\Om\subseteq \R^N$ be an open set.}
In this paper  the ``displacement field" $u$ lies in $BV(\Om)$, the space of functions with bounded variations. We refer the reader to e.g.\;\cite{ambrosio.fusco.pallara} for background material. \par
We say that
 $
u_n\weakst u\qquad\text{\wlystar in }BV(\Om)
$
iff
 $u_n\to u\text{ strongly in $L^1(\Om;\R)$ and } Du_n\weakst Du \;\text{ \wlystar in }\ms_b(\Om;\R^N).$ 
Bounded sequences in $BV(\Om)$ always admit a \wlystar converging subsequence.

\vskip15pt\noindent{ \bf Periodic torus.}
Our analysis of the homogenization problem relies on an extensive use of two-scale convergence (see Section \ref{sec:twoscale}). We thus need to consider the space of $\Z^N$-periodic continuous (or $C^1$) functions on $\R^N$, and its dual, a space of measures that enjoys suitable periodicity properties. These spaces are most conveniently viewed as acting on a torus.
\par
Let $\ys:=\R^N/\Z^N$ be the $N$-dimensional torus, which we endow with the standard differentiable structure as a quotient space. Let us denote with $\pi_\ys$ the projection of $\R^N$ onto $\ys$.
\par
 Smooth functions on $\ys$ are naturally identified with smooth functions on $\R^N$ which are $\Z^N$-periodic. For $\varphi$ function on $\ys$, we will denote with $\varphi_{\R^N}$ the associated periodic function on $\R^N$ and conversely, given a periodic function $\psi$ on $\R^N$, we will write $\psi_\ys$ for the associated function on $\ys$. Taking advantage of this equivalence we are at liberty to define differential operators for functions or vector fields on $\ys$: for example, the gradient of a function on $\ys$ is 
$$
D_y \varphi= (D_x\varphi_{\R^N})_\ys.
$$
This definition is consistent with that of gradient for the standard Riemannian structure on $\ys$ inherited by $\R^N$ as a differentiable manifold. 
\par
 Distributions or  measures on $\ys$ are also  associated to "periodic" distributions or measures on $\R^N$  allowing us to   transfer the standard euclidean techniques to the torus and therefore to keep a complete formal analogy. In Sections \ref{sec:twoscale} and \ref{sec:notplastic} we will 
need in particular the regularization by convolution of measures. The main definitions and relevant properties are collected in the Appendix  which can be viewed as a self-contained elementary introduction to convolution of measures on the torus.
\par

 For any $\e>0$ and $\zs\subset\ys$, we define
\begin{equation}
\label{eq:def-zeps}
\zs_\e:=\{x\in \R^N:\displaystyle \pi_\ys(x/\e)\in \zs\},
\end{equation}
while for any function $F:\ys\to X$, where $X$ is some set, we will write simply $F\left(x/\e\right)$ to denote the $\e \Z^N$-periodic function $F(\pi_\ys(x/\e))$.

\medskip

Finally, as alluded to before, we adopt throughout the terminology of elasto-plasticity, using  words like displacement, stress, elastic and plastic strains, yield stress, elasticity tensor, etc..., although we do not tackle the complexity of the true elasto-plastic problem, and most notably the requirement that the plastic strain should be deviatoric. 

\section{ Static heterogeneous scalar Hencky plasticity }
\label{sec:Hencky-het}
In this section we adapt to the scalar setting the approach to heterogeneous plasticity developed in \cite{francfort.giacomini},  restricting our attention to two phase materials. The relevant definitions are the object of the first subsection while the periodic setting is detailed in the second subsection.
\subsection{Two-phase heterogeneity}\label{sub:2phase}
Let  $\Om\subset \R^N$ be an open, bounded set with (at least) Lipschitz boundary {and exterior normal $\nu$}. Further, let the Dirichlet part $\Gamma_d$ of $\partial\Om$  be a non empty open set in the relative topology of $\partial\Om$. 
\par 

\vskip10pt\noindent{\sf Admissible configurations.} 
Given the boundary displacement $w\in H^1(\R^N)$, the family of admissible configurations relative to $w$, is the set of triplets $(u,e,p)$ with
$$
u\in BV(\Om),\qquad e\in L^2(\Om;\R^N),\qquad p\in \ms_b(\Om\cup \Gamma_d;\R^N),
$$
and such that
\begin{equation*}
Du=e+p\quad\text{ in }\Om,\qquad p= (w-u)\nu \,\hn\lfloor \Gamma_d\quad\text{ on }\Gamma_d,
\end{equation*}
where $(w-u)$ is to be understood in the sense of traces.
\par
 In order to handle the Dirichlet boundary condition, it proves convenient to consider $\Om'\subseteq \R^N$ open bounded with $\Om\subset\Om'$ and such that $\partial \Om\cap \Om'=\Gamma_d$. Given a boundary displacement $w\in H^1(\R^N)$, 
and a configuration $(u,e,p)\in \as(w)$, we may extend $u,e,p$ to $\Om'$ by setting
\begin{equation}
\label{eq:wOm'}
u=w,\qquad e=\nabla w,\qquad p=0\qquad\text{on }\Om'\setminus\Omb.
\end{equation}
It is readily checked that 
\begin{equation}
\label{eq:comp-Om'}
Du=e+p\qquad \text{on }\Om'.
\end{equation}
Then the family of admissible configurations for $w$ can be described  equivalently  as
\begin{multline}
\label{eq:asOm'}
\as(w,\Om')=\{(u,e,p)\in BV(\Om')\times L^2(\Om';\R^N)\times \ms_b(\Om';\R^N)\;:\,\\
 \text{\eqref{eq:wOm'} and \eqref{eq:comp-Om'} are satisfied}\}.
\end{multline}

\par

\vskip10pt\noindent{\sf Two-phase elasto-plastic materials.}  
We assume that the domain $\Om$ is made up of two open phases $\Om_1,\Om_2$, which are Lipschitz  domains,  together with their interface  $\Sigma:=\pa \Om_1\cap\pa \Om_2\cap\Om$.
\par
The elasto-plastic properties of $\Om$ are given in terms of an  elasticity tensor and a  dissipation potential.
\begin{itemize}
\item The elasticity tensor is  given by a symmetric $N\times N$-matrix of the form $\C(x)$
with $\C:=\C_i $  on  $\Om_i$ and  
\bel{eq:Cinc}
\C_i  \text{ is a symmetric definite positive (constant) matrix.}
\ee
\vskip5pt
\item In elasto-plasticity, the deviatoric part of the stress $\sigma=\C e$ is  restricted by the yield condition. When specialized to a scalar two-phase setting, we are simply led to assuming the existence of  a convex compact set $K_i\subset\R^N$ for  phase $\Om_i, i=1,2$. We further assume that those cannot be too small or too large, {\it i.e.}, there exist $c_1,c_2>0$ such that
\bel{eq:Kinc}
B(0, c_1)\subset K_i  (i=1,2)\subset B(0,c_2).
\ee
\par
In the rest of the paper, we will further assume that 
\bel{eq:ordK}
K_1\subset K_2.
\ee
 The ordering assumption \eqref{eq:ordK} is not present in \cite{francfort.giacomini, FGhom}. It   simplifies the proof of our main homogenization result, that is Theorem \ref{thm:2sc-gl}.  Absent this restriction,  additional technical difficulties arise which we prefer to avoid, even if it is our unsubstantiated belief that  the general strategy of the proof would not be altered. 
\vskip5pt
\item For all $x\in\Om_i$ and $\xi\in \R^N$, we define the dissipation potential
to be
$$
H_i(\xi):= \sup\{\tau\cdot \xi: \tau\in K_i\}.
$$
Thanks to \eqref{eq:ordK} we get
\begin{equation*}
H_1\le H_2.
\end{equation*}
The dissipation potential $H: (\Om \cup \Gamma_d) \times\R^N \to [0,+\infty)$ is defined as
\bel{eq:defHdens}
H(x,\xi)=
\begin{cases}
H_{1}(\xi)&\text{if }x\in\Om_1\cup \Sigma \cup (\pa\Om_1\cap \Gamma_d)\\
H_{2}(\xi)&\text{otherwise.}\\
\end{cases}
\ee
 The restriction \eqref{eq:ordK} underlies the definition \eqref{eq:defHdens} of $H$;  since $H=H_1$ (the lowest dissipation) on the interface $\Sigma$,  $H(\cdot, \xi)$ is  lower semicontinuous on $\Om \cup \Gamma_d$.  Moreover $H(x,\cdot)$  is convex and positively one-homogeneous and satisfies for every $x\in \Om\cup \Gamma_d$ and $\xi\in\R^N$
$$
c_3|\xi|\le H(x,\xi)\le c_4|\xi|,
$$
where $c_3, c_4>0$. 
\end{itemize}
For every  $(u,e,p)\in \as(w,\Om')$  we define the elastic energy as
$$
\qs(e):=\frac{1}{2}\int_{\Om'} \C(x) e\cdot e\,dx,
$$
where $\C$  has been extended to an arbitrary (possibly $x$-dependent) elasticity tensor on $\Om'\setminus \Om$ . Note that
the additional contribution to the elastic energy on $\Om'\setminus \Om$ is  given by  $1/2\int_{\Om'\setminus \Om}\C \nabla w\cdot \nabla w\ dx$; it is a fixed contribution that only depends  on $w$. Similarly we define 
 the dissipation functional to be
$$
\hs(p):=\int_{\Om\cup\Gamma_d}H\left(x,\frac{p}{|p|}\right)\,d|p|=\int_{\Om'}H\left(x,\frac{p}{|p|}\right)\,d|p|
$$
and set 
\begin{equation}
\label{eq:estot}
\es(u,e,p):=\qs(e)+\hs(p),
\end{equation}
which we call  the {\it total energy} of the configuration. 
\par
 The {\it  elasto-plastic equilibrium problem} is given by
\begin{equation}
\label{eq:minpb}
\min_{(u,e,p)\in \as(w,\Om')}\es(u,e,p).
\end{equation}

Existence of solutions to the previous problem is a simple consequence of the lower semicontinuity properties of $\qs$ and $\hs$ on $L^2(\Om;\R^N)$ and $\ms_b(\Om\cup \Gamma_d;\R^N)$,  respectively. Indeed, being quadratic, $\qs$ is lower semicontinuous with respect to the weak topology of $L^2(\Om;\R^N)$, while $\hs$ is  lower semicontinuous on $\ms_b(\Om\cup \Gamma_d;\R^N)$ with respect to the weak* convergence in view of Reshetnyak's lower semicontinuity theorem (see \cite[Theorem 2.38]{ambrosio.fusco.pallara}).
 Since the total energy $\es$ is coercive in $e$ and $p$, and consequently in $u$ in $BV(\Om')$ in view of \eqref{eq:wOm'} and \eqref{eq:comp-Om'}, 
\begin{equation}\label{eq:Kin}
\|u\|_{BV(\Om')}+\|e\|_{L^2(\Om;\R^N)}+\|p\|_{\ms_b(\Om';\R^N)}\le C(1+\es(u,e,p)),
\end{equation}
for some $C>0$ (depending on $w$). The existence of minimizers for \eqref{eq:minpb} follows by applying the Direct Method of the Calculus of Variations.

\begin{remark}
\label{rem:geom}
 The previous setting is a suitable simplification of the notion of  geometrically admissible multiphase domains described in \cite{francfort.giacomini}.  There  the ordering assumption \eqref{eq:ordK} is not assumed and a more careful definition of the dissipation on $\Sigma$ is required, forcing additional structure of the interface. Under the ordering assumption \eqref{eq:ordK},  no further structure is necessary. 
\hfill\P
\end{remark}

\begin{remark}[\bf Relaxation from smooth configurations]
\label{rem:relax}
A localization/translation/regular\-ization procedure similar to those of the lemmata in Subsection \ref{subsec:2s} below -- interpreting  the $y$-approximations as $x$-approximations -- would demonstrate that the total energy $\es$ in \eqref{eq:estot} is the lower semicontinuous envelope with respect to the product of the strong $L^1(\Om')$-topology, the weak $L^2(\Om';\R^N)$-topology and the weak*$ \ms_b(\Om';\R^N)$-topology of the functional
\bel{eq:esrel}
\es_{reg}(u,e,p)=\begin{cases}\qs(e)+\hs(p),&\text{if }(u,e,p)\in\as_{reg}(w, \Om')\\ +\infty&{\rm else,}\end{cases}
\ee
where 
\begin{multline*}
\as_{reg}(w, \Om'):=\left\{(u,e,p)\in \as(w, \Om')\,:\, u\in W^{1,1}(\Om'), \,\,p\in L^1(\Om';\R^N),\right.\\
\left.\text{$u=w$ and $p=0$ on $\Om'\setminus \Omb$}\right\}.
\end{multline*}
Note that, for $(u,e,p)\in \as_{reg}(w,\Om')$
$$
\hs(p)=\int_\Om H( x, p(x))\,dx,
$$
so that the values of the dissipation functional on the interface $\Sigma$ becomes irrelevant, as it has zero Lebesgue measure. As a consequence, the definition  $H=H_1$ on $\Sigma$  can be viewed as an outcome of a variational relaxation procedure.\EEE
\par
 The issue of relaxation from smooth configurations in plasticity has been addressed in \cite{mora} in the homogeneous case.  To the best of our knowledge an analogous relaxation process has not been performed in  a heterogeneous context, even in our case which is admittedly the simplest heterogeneity one can envision.  Let us finally remark that the definition of $H$ on $\Sigma$ has  also been motivated in \cite{francfort.giacomini} through a vanishing hardening approximation for quasi-static evolutions and this even in the  case without assumption \eqref{eq:ordK}. 
\hfill\P
\end{remark}

\EEE

\subsection{Periodic heterogeneous materials}
\label{sub:unit-cell}
We now detail the structure of scalar periodic heterogenous elasto-plastic materials. Elastic and plastic properties are defined through periodic functionals that we can assume defined through the use of the periodic torus $\ys$.
\par
Let the torus $\ys$ be  made of two disjoint open phases  with Lipschitz boundary  $\ys_1,\ys_2$, together with the interface $\Sigma$, {\it i.e.,} $\ys= \ys_1\cup  \ys_2\cup\Sigma$. 
\begin{itemize}
\item The elasticity tensor is  given by  a symmetric $N\times N$-matrix of the form $\C(y)$
with $\C:=\C_i $  on  $\ys_i$ and  
\bel{eq:Cpos2}\C_i  \text{ is a symmetric definite positive (constant) matrix.}
\ee

\item The set of admissible stresses is a convex compact  set $K_i\subset\R^N$ for each phase $\ys_i$ satisfying \eqref{eq:Kinc} and \eqref{eq:ordK}, The associated dissipation potential $H: \ys\times \R^N\to [0,+\infty[$ is given by
\bel{eq:defHper}
H(y,\xi)=\begin{cases}H_i(\xi) &\text{if }y\in\ys_i\\[2mm]H_1(\xi),&\text{if }y\in \Sigma. 
\end{cases}
\ee
As before, $H$ is lower semicontinuous since $H_1\le H_2$ (see \eqref{eq:ordK}) and  $H(y,\cdot)$  is convex and positively one-homogeneous in $\xi$, with
\begin{equation}
\label{eq:bdsHper}
c_3|\xi|\le H(y,\xi)\le c_4|\xi|,\mbox{ for all } y\in \ys
\end{equation}
where $c_3, c_4>0$.
\end{itemize}
Given $\e>0$, the domain $\Om$ is made up of the two phases $(\ys_1)_\e,(\ys_2)_\e$ {(see \eqref{eq:def-zeps})}. 
For  $(u,e,p)\in \as(w,\Om')$  defined in \eqref{eq:asOm'} we consider the elastic energy
\begin{equation}
\label{eq:defQe}
\qs_\e(e):=\frac{1}{2}\int_{\Om'} \C\left(\frac{x}{\e}\right) e\cdot e\,dx
\end{equation}
and the dissipation functional 
\begin{equation}
\label{eq:defHe}
\hs_\e(p):=\int_{\Om\cup\Gamma_d}H\left(\frac{x}{\e},\frac{p}{|p|}\right)\,d|p|=\int_{\Om'}H\left(\frac{x}{\e},\frac{p}{|p|}\right)\,d|p|.
\end{equation}
Here we use the fact that $\C(\cdot, \xi)$ and $H(\cdot,\xi)$ can be interpreted as periodic functions on $\R^N$. 
\par
The total energy of the configuration $(u,e,p)\in \as(w,\Om')$ is accordingly given by
\begin{equation}
\label{eq:totEeps}
\es_\e(u,e,p):=\qs_\e(e)+\hs_\e(p),
\end{equation}
and  the periodic elasto-plastic equilibrium problem  takes the form
$$
\min_{(u,e,p)\in \as(w,\Om')}\es_\e(u,e,p).
$$
Observe that inequality \eqref{eq:Kin} holds true for $\es_\e$ in lieu of $\es$ with a constant $C$ independent of $\e$. 
\par
We are interested in the  asymptotic behaviour as $\e\to 0^+$  of the energy $\es_\e$ and of the  minimizers $(u_\e,e_\e,p_\e)$  which trivially exist by lower semi-continuity and coercivity. Note that the contribution to the elastic energy of $\Om'\setminus \Om$ converges to $\frac{1}{2}\int_{\Om'\setminus \Om}\bar \C \nabla w\cdot\nabla w\,dx$ as $\e\to 0$, where $\bar \C:=\int_\ys \C(y)\,dy$. In other words, the asymptotic behavior on $\Om'\setminus \Om$ is trivial.
\section{ Homogenization for periodic scalar Hencky plasticity}
\label{sec:twoscale}
 
In this section we study the asymptotic behaviour of the functional $\es_\e$ defined in \eqref{eq:totEeps} as $\e$ goes to $0$. 
After recalling the definition of two-scale convergence for functions and measures in Subsection \ref{sub:2sc-res}, we describe in Subsection \ref{sub:Henky2s} the main results concerning two-scale periodic elasto-plasticity obtained in  \cite[Sections 4, 5]{FGhom} which we specialize to the simpler $BV$-setting. In Subsection \ref{subsec:2s} we prove that the two-scale total energy $\eshom$ can be seen as a suitable $\Gamma$-limit of the energies $\es_\e$ in the sense of two-scale convergence. Finally in Subsection \ref{sub:homHencky} we characterize the $\Gamma$-limit of $\es_\e$ in terms of $\eshom$ (see Theorem \ref{thm:hom-res}).

\subsection{ Two-scale convergence}
\label{sub:2sc-res}
 We refer the reader to \cite{allaire, nguetseng}  for the classical notion of two-scale convergence in Lebesgue spaces and to  \cite{amar} for that of two-scale weak* convergence of measures.

\begin{definition}[\bf Two-scale convergence for functions and measures]
\label{def:t-s-meas}
Let $\Om\subseteq\R^N$ be {an open} set.
\begin{itemize}

\item[(a)] Let $\{u_\e\}_{\e>0}$ be a  family in $L^p(\Om)$ for some $p\in [1,+\infty)$, and let $u_0\in L^p(\Om\times \ys)$. Then
$$
u_\e\weakdue u_0\qquad \text{two-scale weakly in }L^p(\Om\times\ys)
$$
if $\{u_\e\}_{\e>0}$ is bounded in $L^p(\Om)$ and if for every $\chi\in C^0_c(\Om\times\ys)$
$$
\lim_{\eps\to 0^+}\int_\Om u_\e(x)\chi\left(x,\frac{x}{\eps}\right)\,dx=\int_{\Om\times \ys}u(x,y)\chi(x,y)\,dxdy.
$$

\item[(b)]  Let $\{\mu_\eps\}_{\eps>0}$ be a family in $\ms_b(\Om)$ and consider $\mu_0\in \ms_b(\Om\times \ys)$. Then, 
$$
\mu_\eps\weakduest \mu_0\qquad\text{\duewst in }\ms_b(\Om\times \ys)
$$
iff,  for every $\chi\in C^0_0(\Om\times\ys)$,
$$
\lim_{\eps\to 0^+}\int_\Om \chi\left(x,\frac{x}{\eps}\right)\,d\mu_\eps(x)=\int_{\Om\times \ys}\chi(x,y)\,d\mu_0(x,y).
$$
\end{itemize}
\end{definition}

Bounded sequences in $L^p(\Om)$ for $p\in (1,+\infty)$ or in $\ms_b(\Om)$ always admit two-scale convergent subsequences (see \cite[Theorem 1.2]{allaire} for functions and \cite[Theorem 3.5]{amar} or Remark \ref{rem:ts1} below for measures).

\begin{remark}
\label{rem:ts1}
The family $\{\mu_\eps\}_{\eps>0}$ determines a family of measures $\{\lambda_\eps\}_{\eps>0}\subset \ms_b(\Om\times \ys)$ obtained by setting
$$
\int_{\Om\times \ys}\chi(x,y)\,d\lambda_\e(x,y):=\int_\Om \chi\left(x,\frac{x}{\eps}\right)\,d\mu_\eps(x)
$$
for every $\chi\in {C^0_0}(\Om\times \ys)$. Thus $\mu_0$ is simply the weak* limit in $\ms_b(\Om\times \ys)$ of $\{\lambda_\e\}_{\e>0}$.  In view of this identification, compactness for two-scale weak* convergence easily follows from that of standard weak* convergence for measures.
\hfill\P\end{remark}

\begin{remark}[\bf Weak convergence and two-scale weak convergence]
\label{rem:marginal}
The following properties of two-scale convergence   immediately follow upon taking test functions of the form $\chi(x,y):=\varphi(x)\psi(y)$ with $\varphi\in C^0_c(\Om)$ and $\psi\in C^0(\ys)$.
\begin{itemize}
\item[(a)]  If $u_\e \weakdue u_0(x,y)$ two-scale weakly in $L^p(\Om\times \ys)$ and $u_\e\weak u$ weakly in $L^p(\Om)$, then, for a.e. $x\in \Om$,
$$
u(x)=\int_\ys u_0(x,y)\,dy.
$$
\item[(b)] If $\mu_\e \weakduest \mu_0$ two-scale weakly* in $\ms_b(\Om\times \ys)$ and $\mu_\e\weakst \mu$ weakly* in $\ms_b(\Om)$, then, for every Borel set $B\subseteq \Om$,
$$
\mu(B)=\mu_0(B\times \ys).
$$
\end{itemize}
Items (a) and (b) show that weak or weak* limits can be recovered from their associated weak or weak* two-scale limits.
\hfill\P
\end{remark}

For our homogenization problem in scalar Hencky plasticity, we will need to  consider  two-scale weak* limits of measures which are also gradients of $BV$ functions. For $\Om\subseteq \R^N$ {open}, set
\begin{multline}
\label{defX}
\xs(\Om) :=\left\{\mu \in  \ms_b(\Om\times \ys): D_y\mu \in \ms_b(\Om\times \ys; \R^N), \right.\\
\left.{\mu(F\times \ys)=0\text{ for every Borel set $F\subseteq \Om$}}\right\},
\end{multline}
where $D_y\mu$ denotes the distributional  gradient of $\mu$ with respect to $y$ (see Appendix).  Following  \cite[Proposition 4.10]{FGhom}, if $u_\e\weakst u$ weakly* in $BV(\Om)$, then up to a subsequence
$$
Du_\e \weakduest Du\otimes \leb^N_y+D_y\mu\qquad\text{two-scale weakly* in }\ms_b(\Om\times \ys;\R^N)
$$
for some $\mu\in \xs(\Om)$. The following proposition enumerates the main properties of $\xs(\Om)$ that will be used in the sequel. It is the scalar reduction of \cite[Proposition 4.7]{FGhom}.

\begin{proposition}
\label{lem:Xs}
Let $\mu\in \xs(\Om)$. Then there exist $\eta\in \ms^+_b(\Om)$ and a Borel map
$(x,y)\in \Om\times \ys\mapsto \mu_x(y)\in \R$ such that, for $\eta$-a.e. $x\in \Om$,
\begin{equation}
\label{eq:mux-bd}
\mu_x\in BV(\ys),\qquad \int_\ys\mu_x(y)\,dy=0,\qquad {|D_y\mu_x|(\ys)\ne 0,}
\end{equation}
and
\bel{eq:dec-mu}
\mu=\mu_x(y)\,(\eta\otimes \leb^N_y).
\ee
Moreover, the map $x\mapsto D_y\mu_x\in \ms_b(\ys;\R^N)$ is $\eta$-measurable and
$$
D_y\mu=\eta\ootimes D_y\mu_x.
$$
\end{proposition}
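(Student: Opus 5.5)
The plan is to disintegrate the vector measure $D_y\mu$ rather than $\mu$ itself. I will recover $\mu_x$ from the slices of $D_y\mu$ by solving a Poisson-type problem on the torus, and then use the zero-mean condition built into $\xs(\Om)$ to identify $\mu$. Set $\lambda:=D_y\mu\in\ms_b(\Om\times\ys;\R^N)$ and let $\eta\in\ms_b^+(\Om)$ be the push-forward of $|\lambda|$ under the projection onto $\Om$, that is $\eta(B)=|D_y\mu|(B\times\ys)$. By the disintegration theorem, extended to vector-valued measures as recalled in the notation section, we can write $\lambda=\eta\ootimes\lambda_x$. Here $x\mapsto\lambda_x\in\ms_b(\ys;\R^N)$ is $\eta$-measurable and $|\lambda_x|(\ys)=1$ for $\eta$-a.e.\ $x$; in particular $\lambda_x\neq0$.

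The first key step is to show that, for $\eta$-a.e.\ $x$, $\lambda_x$ is a $y$-gradient. Since $\lambda=D_y\mu$, the $y$-curl vanishes in the sense of distributions: for $\varphi\in C^\infty_c(\Om)$ and $\psi\in C^\infty(\ys)$,
$$
\int\varphi(x)\big(\partial_{y_j}\psi\, d\lambda^i-\partial_{y_i}\psi\, d\lambda^j\big)=0 .
$$
Testing with $\psi(y)=e^{2\pi i k\cdot y}$ also gives $\int\varphi\,\psi\,d\lambda^i=0$ for $k=0$, because $D_y$ of anything integrates to zero in $y$. Using a countable dense family of $\varphi$ and all $k\in\Z^N$, I obtain that for $\eta$-a.e.\ $x$ the Fourier coefficients of $\lambda_x$ satisfy $\hat\lambda_x(0)=0$ and $k_j\hat\lambda^i_x(k)=k_i\hat\lambda^j_x(k)$. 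Hence $\hat\lambda_x(k)=2\pi i k\,\hat v_x(k)$ with $\hat v_x(k):=k\cdot\hat\lambda_x(k)/(2\pi i|k|^2)$ and $\hat v_x(0)=0$. The resulting distribution $v_x$ on the torus satisfies $D_yv_x=\lambda_x$, a finite measure, so $v_x\in BV(\ys)$ with zero mean. This is the torus analogue of the fact that a distribution with measure gradient is $BV$, which I would take from the Appendix. Measurability of $(x,y)\mapsto v_x(y)$ follows because $v_x$ is obtained from $\lambda_x$ by a fixed linear operation, namely convolution with the periodic kernel of $D_y\Delta_y^{-1}$. Approximating by mollified versions, using the convolution on $\ys$ from the Appendix, gives a Borel version.

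The second step is the identification $\mu=v_x(y)\,(\eta\otimes\leb^N_y)$. Put $\nu:=v_x(y)\,(\eta\otimes\leb^N_y)$; it is a finite measure since $\|v_x\|_{L^1(\ys)}\le C|\lambda_x|(\ys)=C$ by the Poincar\'e inequality on $\ys$. By construction $D_y\nu=\eta\ootimes D_yv_x=D_y\mu$, so $\sigma:=\mu-\nu$ has $D_y\sigma=0$. I then need the rigidity statement that a measure on $\Om\times\ys$ with zero $y$-gradient is of the form $\sigma=\theta\otimes\leb^N_y$ with $\theta\in\ms_b(\Om)$. To prove it, I mollify in $y$ only, via the Appendix convolution, which commutes with $D_y$. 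Alternatively, testing with $\varphi(x)e^{2\pi ik\cdot y}$ shows all nonzero $y$-Fourier modes of $\sigma$ vanish. Then $\theta(F)=\sigma(F\times\ys)=\mu(F\times\ys)-\nu(F\times\ys)=0$, using the defining condition of $\xs(\Om)$ and the zero mean of $v_x$. Hence $\mu=\nu$, and we take $\mu_x:=v_x$. The claimed formula $D_y\mu=\eta\ootimes D_y\mu_x$ and the $\eta$-measurability of $x\mapsto D_y\mu_x$ are then immediate.

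I expect the main obstacle to be the measure-theoretic bookkeeping in the first step. The curl-free condition and the vanishing zero mode hold only after integration against $\varphi(x)$. Passing to $\eta$-a.e.\ $x$ statements requires a countable separating family of test functions together with the uniqueness part of the disintegration theorem. The Borel joint measurability of $v_x(y)$ also needs care. The rigidity lemma is routine via Fourier modes in $y$.
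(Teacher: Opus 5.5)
The paper contains no proof of this proposition; it quotes it as the scalar case of \cite[Proposition 4.7]{FGhom}. There is therefore nothing in the text to match against. Your proposal is a correct, self-contained proof.

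Taking $\eta$ to be the $x$-marginal of $|D_y\mu|$ and building $\mu_x$ as the zero-mean periodic potential of the slice $\lambda_x$ gives $|D_y\mu_x|(\ys)=1$ for $\eta$-a.e.\ $x$ with no further work. The defining constraint $\mu(F\times\ys)=0$ of $\xs(\Om)$ is used exactly where it is needed: in the rigidity step $D_y\sigma=0\Rightarrow\sigma=\theta\otimes\leb^N_y$, followed by $\theta=0$. That step is what gives $\mu\ll\eta\otimes\leb^N_y$. Without the constraint the statement would fail, since every $\theta\otimes\leb^N_y$ has vanishing $D_y$.

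A different route, closer to how the statement is phrased, would disintegrate $\mu$ itself with respect to $\pi_\#(|\mu|+|D_y\mu|)$. One then identifies $D_y\mu_x$ with the corresponding slice of $D_y\mu$ by testing against $\varphi(x)\psi(y)$ for countably many $\psi$. Since a measure on $\ys$ whose gradient is a finite measure is a $BV$ function, $\mu_x\ll\leb^N_y$ with zero mean. Finally one restricts $\eta$ to the set where $D_y\mu_x\neq0$; elsewhere $\mu_x$ is a zero-mean constant, hence $0$. That route avoids the Fourier construction and the rigidity lemma. It needs the $BV$-regularity fact separately, though, and gets the non-degeneracy only after discarding slices. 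Your route builds everything from $D_y\mu$ and gets the normalization for free.

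Two points could be tightened.
\begin{itemize}
\item The fact you plan to take from the Appendix (a distribution on $\ys$ with finite-measure gradient is in $BV(\ys)$) is not proved there. Your own formula supplies it. The multiplier $\hat v_x(k)=k\cdot\hat\lambda_x(k)/(2\pi i|k|^2)$ means $v_x=\nabla G*\lambda_x$, where $G$ is the zero-mean periodic fundamental solution of $\Delta_y$. Since $\nabla G\in L^1(\ys)$ (it behaves like $|y|^{1-N}$ at the origin), you get $\|v_x\|_{L^1(\ys)}\le\|\nabla G\|_{L^1(\ys)}$ and $D_yv_x=\lambda_x$ at once, with no appeal to Poincar\'e.
\item The joint Borel measurability of $(x,y)\mapsto v_x(y)$ can be sidestepped. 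Define $\nu$ as the generalized product $\eta\ootimes(v_x\,\leb^N_y)$. This only requires $x\mapsto\int_B v_x\,dy=\int_\ys h_B\cdot d\lambda_x$ to be $\eta$-measurable, where $h_B(z):=\int_B\nabla G(y-z)\,dy$ is continuous. Once $\mu=\nu$ is established, $\mu\ll\eta\otimes\leb^N_y$, and a Borel version of the Radon--Nikodym derivative $d\mu/d(\eta\otimes\leb^N_y)$ serves as $\mu_x(y)$.
\end{itemize}
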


\subsection{ Two-scale setting for scalar Hencky plasticity}
\label{sub:Henky2s}
We now define the set of admissible two-scale (kinematically admissible) configurations.

\begin{definition}[\bf Kinematically admissible two-scale configurations]
\label{set:Aw2} 
$\ashom(w,\Om')$, the family of admissible two-scale configurations relative to $w$, is the set of triplets $(u,E,P)$ with
$$
u\in BV(\Om'),\qquad E\in L^2(\Om'\times \ys;\R^N),\qquad P\in \ms_b(\Om'\times \ys;\R^N),
$$
such that 
\begin{equation}
\label{eq:wOm'bis}
u=w,\qquad E=\nabla w,\qquad P=0\qquad \text{ on }(\Om'\setminus\Omb)\times \ys,
\end{equation}
and also such that there exists $\mu\in \xs(\Om')$ (see \eqref{defX}) with
\begin{equation}
\label{eq:adm1}
E(x,y)\,(\leb^N_x\otimes\leb^N_y)+P-Du\otimes \leb^N_y= D_y\mu\qquad \text{in }\Om'\times \ys.
\end{equation}
\end{definition}

\begin{remark}
\label{rem:unique-mu}
The element $\mu\in \xs(\Om')$ associated with $(u,E,P)$ according to the previous definition is uniquely determined (see \cite[Remark 5.2 ]{FGhom}).
Moreover, following \cite[Lemma 4.8]{FGhom}, $\ashom(w,\Om')$ is closed under the associated weak convergences of the triplets $(u,E,P)$. \hfill\P
\end{remark}

The definition of the class of admissible two-scale configurations is motivated by the following compactness result (see \cite[Lemma 5.6]{FGhom}).

\begin{proposition}[\bf Compactness]
\label{lem:comp1}
Let  $(u_{\e},e_{\e},p_{\e})\in \as(w,\Om')$ be such that
$$
\|u_{\e}\|_{BV(\Om')}+\|e_{\e}\|_{L^2(\Om'; \R^N )}+\|p_{\e}\|_{\ms_b(\Om'; \R^N)}\le C.
$$
 Then, up to a subsequence,
\begin{equation}
\label{eq:def-weak2}
\begin{array}{ll}
u_{\e}\weak u&\qquad \text{weakly* in }BV(\Om')\\
e_{\e}\weakdue  E&\qquad \text{\duew in }L^2(\Om'\times \ys;\R^N)\\
p_{\e}\weakduest P&\qquad\text{\duewst in }\ms_b(\Om'\times \ys;\R^N)\\
\end{array}
\end{equation}
 with $(u,E,P)\in \ashom(w,\Om')$. 
\end{proposition}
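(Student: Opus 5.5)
The plan is to extract convergent subsequences using the compactness results already available, and then to obtain the two-scale admissibility condition \eqref{eq:adm1} by passing to the limit in the relation $Du_\e=e_\e+p_\e$ on $\Om'$.

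\emph{Step 1: limits.} The sequences are bounded in $BV(\Om')$, $L^2$ and $\ms_b$. We extract a subsequence with $u_\e\weakst u$ weakly* in $BV(\Om')$, which is possible because $\Om'$ is a bounded domain. By \cite[Theorem 1.2]{allaire} we also get $e_\e\weakdue E$. Remark \ref{rem:ts1} gives $p_\e\weakduest P$. Applying \cite[Proposition 4.10]{FGhom}, recalled before Proposition \ref{lem:Xs}, and refining the subsequence once more, we obtain $\mu\in\xs(\Om')$ with
\[
Du_\e\weakduest Du\otimes\leb^N_y+D_y\mu \qquad\text{two-scale weakly* in }\ms_b(\Om'\times\ys;\R^N).
\]

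\emph{Step 2: compatibility.} Test $Du_\e=e_\e+p_\e$ with $\chi(x,x/\e)$, where $\chi\in C^0_c(\Om'\times\ys;\R^N)$. Two-scale weak convergence in $L^2$ identifies $e_\e\,\leb^N$ with the measure $E\,(\leb^N_x\otimes\leb^N_y)$ when tested against such $\chi$. Hence
\[
Du\otimes\leb^N_y+D_y\mu=E\,(\leb^N_x\otimes\leb^N_y)+P
\]
as distributions. Since every term is a finite measure, the identity holds as measures on $\Om'\times\ys$, and this is exactly \eqref{eq:adm1}.

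\emph{Step 3: boundary conditions \eqref{eq:wOm'bis}.} On the open set $\Om'\setminus\Omb$ we have $u_\e=w$, $e_\e=\nabla w$ and $p_\e=0$. Testing with $\chi$ supported in $(\Om'\setminus\Omb)\times\ys$ gives $E=\nabla w$ there and $P=0$ there; $u=w$ there is immediate.

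The delicate point is that $P$ could charge $\pa\Om\cap\Om'\times\ys$, that is, the set $\Gamma_d\times\ys$. This is allowed, since $P\in\ms_b(\Om'\times\ys)$ and \eqref{eq:wOm'bis} only concerns the open complement $(\Om'\setminus\Omb)\times\ys$. A related subtlety is that $\mu$ might charge $(\Om'\setminus\Omb)\times\ys$. On that set \eqref{eq:adm1} reduces to $D_y\mu=0$, so $\mu_x$ is constant in $y$ there. Combined with the zero-mean constraint in \eqref{eq:mux-bd}, this forces $\mu=0$ on that set, so the definition is consistent.

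\emph{Main obstacle.} The key technical input is Step 1's two-scale characterization of limits of $Du_\e$, taken from \cite{FGhom}. If that result is not simply quoted, it has to be proved. To do so, I would take the two-scale limit $\Lambda$ of $Du_\e$ and test it against $\e$-rescaled $y$-divergence-free fields, using $\int u_\e\,\mathrm{div}_y\Phi(x,x/\e)\,dx\to0$ after multiplying by $\e$. This shows $\Lambda-Du\otimes\leb^N_y$ is a $y$-gradient, and the solvability of the periodic cell problem on the torus then produces $\mu$.
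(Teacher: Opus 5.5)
Your proposal is correct and is essentially the argument the paper relies on when it cites \cite[Lemma 5.6]{FGhom}: extract subsequences, use \cite[Proposition 4.10]{FGhom} to write the two-scale limit of $Du_\e$ as $Du\otimes\leb^N_y+D_y\mu$, pass to the limit in $Du_\e=e_\e+p_\e$ tested against $\chi(x,x/\e)$, and read off the boundary conditions on $(\Om'\setminus\Omb)\times\ys$. One small imprecision: strong $L^1(\Om')$ compactness of $u_\e$ does not follow from boundedness of $\Om'$ alone; it follows from $u_\e=w$ on $\Om'\setminus\Omb$ together with the Lipschitz regularity of $\Om$.
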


 We will denote the convergences in \eqref{eq:def-weak2} as follows:
\begin{equation}
\label{eq:def-weak2-bis}
(u_{\e},e_{\e},p_{\e})\weakdue (u,E,P).
\end{equation}

For $(u,E,P)\in \ashom(w,\Om')$ we set
\begin{equation}
\label{eq:defQhom}
\qshom(E):=\frac{1}{2}\int_{\Om'\times\ys}\C(y)E\cdot E\,dxdy
\end{equation}
and
\begin{equation}
\label{eq:defHhom}
\hshom(P):=\int_{\Om'\times \ys}H\left( y,\frac{P}{|P|}\right)\,d|P|.
\end{equation}
We call $\qshom$ the  two-scale  homogenized elastic energy, and $\hshom$ the  two-scale  homogenized dissipation. 
Notice that the domain of integration  in the definition of $\hshom$ can be extended to $\Om'$ since $P=0$ on $(\Om'\setminus\Omb)\times\ys$.
\par
 The two-scale homogenized dissipation $\hshom$ is lower semicontinuous with respect to weak* convergence in $\ms_b(\Om'\times \ys;\R^N)$  as a consequence of  Reshetnyak's Lower Semicontinuity Theorem for functionals defined on measures (see \cite[Theorem 2.38]{ambrosio.fusco.pallara}).  This is so because the dissipation density $H$ in \eqref{eq:defHper} is lower semicontinuous on $(\Om'\times \ys)\times \R^N$ thanks to the ordering assumption \eqref{eq:ordK}.\EEE
\par
The following lower semi-continuity result  involving two-scale convergence  holds true (see \cite[Theorem 5.7]{FGhom}).

\begin{theorem}[\bf  Two-scale  lower semicontinuity]
\label{prop:lsc-hs}
Let $(u_\e,e_\e,p_\e)\in \as(w,\Om')$  be such that
\begin{equation*}
(u_{\e},e_{\e},p_{\e})\weakdue (u,E,P)
\end{equation*}
(see \eqref{eq:def-weak2-bis}) with $(u,E,P)\in \ashom(w,\Om')$. Then, for $\qs_\e$ and $\hs_\e$ as in \eqref{eq:defQe} and \eqref{eq:defHe} respectively, we get
\begin{equation*}
\qshom(E)\le  \liminf_{\e\to 0^+}\qs_\e(e_\e),
\end{equation*}
and
\begin{equation*}
\hshom(P)\le \liminf_{\e\to 0^+}\hs_\e(p_\e).
\end{equation*}
\end{theorem}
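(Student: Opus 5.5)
The two inequalities will be proved separately, each through the measures $\lambda_\e$ of Remark \ref{rem:ts1} lifted to $\Om'\times\ys$.

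\textbf{Elastic part.} The first inequality is standard for two-scale convergence in $L^2$. We will use convex duality for the quadratic form. For any $\Phi\in C^0_c(\Om'\times\ys;\R^N)$, the pointwise inequality $\frac12\C e\cdot e\ge \C e\cdot\Phi-\frac12\C\Phi\cdot\Phi$ gives
$$
\qs_\e(e_\e)\ge \int_{\Om'}\C\!\left(\tfrac x\e\right)e_\e\cdot\Phi\!\left(x,\tfrac x\e\right)dx-\frac12\int_{\Om'}\C\!\left(\tfrac x\e\right)\Phi\!\left(x,\tfrac x\e\right)\cdot\Phi\!\left(x,\tfrac x\e\right)dx .
\]
The coefficient $\C(y)$ is only piecewise constant, so $\C(y)\Phi(x,y)$ is not an admissible continuous test function. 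We handle this as follows.

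\begin{itemize}
\item Choose $\Phi$ of the form $\C(y)^{-1}\Psi$ restricted away from $\Sigma$. More simply, take test fields $\Psi(x,y)=\C(y)\Phi(x,y)$ with $\Phi$ continuous and supported in $\Om'\times(\ys_1\cup\ys_2)$, i.e. vanishing near $\Sigma$. Then $\Psi$ is continuous.
\item Two-scale convergence of $e_\e$ handles the first term.
\item For the second term, the oscillating integrand $g(x,x/\e)$ with $g$ continuous converges to $\int g\,dxdy$.
\end{itemize}

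Passing to the limit and taking the supremum over such $\Phi$, which are dense in $L^2(\Om'\times\ys;\R^N)$ because $\Sigma$ is Lebesgue-negligible, yields $\qshom(E)$.

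\textbf{Dissipative part.} Let $\lambda_\e\in\ms_b(\Om'\times\ys;\R^N)$ be the lift of $p_\e$, so that $\lambda_\e\weakst P$. We will show that
$$
\hs_\e(p_\e)=\int_{\Om'\times\ys}H\left(y,\frac{\lambda_\e}{|\lambda_\e|}\right)d|\lambda_\e| .
$$
This follows because $\lambda_\e$ is the push-forward of $p_\e$ under $x\mapsto(x,\pi_\ys(x/\e))$, which is injective, so that $|\lambda_\e|$ is the push-forward of $|p_\e|$ and the polar densities correspond. The identity holds for Borel $H$, so no continuity of $H$ is needed at this stage.

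We then apply Reshetnyak's lower semicontinuity theorem (\cite[Theorem 2.38]{ambrosio.fusco.pallara}) to the integrand $(x,y,\xi)\mapsto H(y,\xi)$. This integrand is nonnegative, convex and positively one-homogeneous in $\xi$, and lower semicontinuous in $(x,y,\xi)$ thanks to \eqref{eq:ordK} and \eqref{eq:defHper}. The theorem requires weak* convergence in $\ms_b$ of a space; $\Om'\times\ys$ is locally compact. Since the $|\lambda_\e|$ are bounded, we may pass to a subsequence realizing the $\liminf$.

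\textbf{Main obstacle.} The main technical point is the lower semicontinuity of the version of Reshetnyak's theorem for merely lower semicontinuous integrands. If it is not directly available, we will approximate $H$ from below by an increasing sequence of continuous integrands $H^k(y,\xi)$, convex and one-homogeneous in $\xi$. A natural choice is $H^k(y,\xi)=\sup_{\tau\in K^k(y)}\tau\cdot\xi$, where $K^k(y)$ interpolates continuously between $K_1$ near $\Sigma$ and $K_2$ inside $\ys_2$, with $K_1\subset K^k(y)\subset K(y)$. This interpolation is possible precisely because $K_1\subset K_2$. With $H^k$ in hand, we apply the continuous Reshetnyak theorem for each $k$ and then use monotone convergence in $k$ to conclude $\hshom(P)\le\liminf\hs_\e(p_\e)$.
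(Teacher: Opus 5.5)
Your argument is correct, but it is not the route the paper relies on. The paper does not reprove the statement; it quotes \cite[Theorem 5.7]{FGhom}, which is proved without the ordering \eqref{eq:ordK}. In that generality the interfacial dissipation (cf.\ Remark \ref{rem:geom}) cannot be the restriction of a jointly lower semicontinuous integrand that is convex in $\xi$: any such integrand is bounded on $\Sigma$ by the support function of $K_1\cap K_2$. So a bare Reshetnyak argument only gives a weaker bound there, and the structure of admissible two-scale limits must be used. Under \eqref{eq:ordK}, which is in force here, the paper itself notes just before the statement that $H$ is lower semicontinuous, so $\hshom$ is weakly* lower semicontinuous by \cite[Theorem 2.38]{ambrosio.fusco.pallara}. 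Your lift $\lambda_\e$ of Remark \ref{rem:ts1} turns this into the two-scale inequality, via the identity $\hs_\e(p_\e)=\int_{\Om'\times\ys}H(y,\lambda_\e/|\lambda_\e|)\,d|\lambda_\e|$; that identity is correct because $x\mapsto(x,\pi_\ys(x/\e))$ is injective. Your elastic part is the standard two-scale lower semicontinuity, correctly adapted to the discontinuous $\C$: test with fields compactly supported in $\Om'\times(\ys_1\cup\ys_2)$ (you wrote only ``supported in''), and get density from $\leb^N(\Sigma)=0$. Your route thus gives a short, self-contained proof that never uses $(u,E,P)\in\ashom(w,\Om')$, at the price of depending entirely on \eqref{eq:ordK}; the cited result does not need that assumption. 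Finally, your fallback approximation by continuous $H^k$ works, for instance $H^k=(1-t_k)H_1+t_kH_2$ with continuous $t_k\nearrow\chi_{\ys_2}$ and $t_k=0$ on $\overline{\ys}_1$. It is superfluous, though, since \cite[Theorem 2.38]{ambrosio.fusco.pallara} is already stated for lower semicontinuous integrands.
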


The results above allow us to introduce a kind of  two-scale $\Gamma$-limit which is the object of the next subsection.

\subsection{ Two-scale homogenization}\label{sub:2sc-gl}
\label{subsec:2s}
For $(u,E,P)\in \ashom(w,\Om')$ set
 \bel{eq:ehom}
 \eshom(u,E,P):=\qshom(E)+\hshom(P),
 \ee
  where $\qshom$ and $\hshom$ are defined in \eqref{eq:defQhom} and \eqref{eq:defHhom}, respectively.
We propose  to prove the 

\begin{theorem}\label{thm:2sc-gl}({\bf Two-scale homogenized energy as a ``two-scale $\Gamma$-limit''})
Assume that
 \bel{eq:reg-gd}
 \hn({\overline\Gamma}_d\setminus\Gamma_d)=0.
 \ee
 The following holds true:
\begin{itemize}
\item[(a)] If  a sequence $(u_\e,e_\e,p_\e)\in \as(w,\Om')$ is such that $(u_\e,e_\e,p_\e) \weakdue (u,E,P)$ (see \eqref{eq:def-weak2-bis}), then $(u,E,P)\in \ashom(w,\Om')$ and
$$
\eshom(u,E,P)\le \liminf_{\e\to 0^+} \es_\e(u_\e,e_\e,p_\e);
$$
\item[(b)] Given $(u,E,P)\in \ashom(w,\Om')$,  there exists a sequence $(u_\e,e_\e,p_\e)\in \as(w,\Om')$ with 
$(u_\e,e_\e,p_\e)\weakdue (u,E,P)$
such that
$$
\eshom(u,E,P)=\lim_{\e\to 0^+} \es_\e(u_\e,e_\e,p_\e).
$$
\end{itemize}
\end{theorem}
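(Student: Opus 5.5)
Part (a) follows from results already recalled. Proposition \ref{lem:comp1}, more precisely its proof via \cite[Lemma 5.6]{FGhom}, shows that the limit triplet lies in $\ashom(w,\Om')$. Theorem \ref{prop:lsc-hs} gives lower semicontinuity of $\qs_\e$ and $\hs_\e$ separately, and summing the two liminf inequalities yields (a). All the work is therefore in (b). There I would use the standard density strategy for $\Gamma$-limsup statements. I would show that the class of \emph{regular} two-scale configurations is dense in energy in $\ashom(w,\Om')$, construct explicit recovery sequences for regular configurations, and finish with a diagonal argument. The diagonal step uses that the two-scale weak topologies are metrizable on bounded sets, since energy bounds give bounds via \eqref{eq:Kin}.

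For the density step, fix $(u,E,P)\in\ashom(w,\Om')$ with associated $\mu\in\xs(\Om')$ from \eqref{eq:adm1}. I would perform the following operations in order.

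\begin{enumerate}
\item \emph{Localize and translate in $x$.} Using \eqref{eq:reg-gd} and the Lipschitz character of $\Om$, take a partition of unity subordinate to a cover of $\Omb$. Near $\Gamma_d$, push each piece slightly outward, i.e.\ into $\Om'\setminus\Omb$, where $u=w$, $E=\nabla w$ and $P=0$. This makes room for a subsequent $x$-mollification without violating \eqref{eq:wOm'bis}. The cutoffs produce error terms of the form $u\nabla\varphi$ and $\mu\nabla\varphi$. These have to be absorbed into $E$ (as $L^2$ contributions, using $u\in BV\subset L^{N/(N-1)}$ only after mollification) or into $P$, and they are small as the translation parameter tends to $0$.
\item \emph{Mollify in $x$.} Convolving in $x$ makes $u$, $E$, $P$ and $\mu$ smooth in $x$ with values in $L^2$ or in measures in $y$. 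Convexity and Jensen keep $\qshom$ and $\hshom$ upper semicontinuous along the regularization, and \eqref{eq:adm1} is linear, so it is preserved.
\item \emph{Regularize in $y$.} Convolve on the torus in $y$, as in the Appendix. This step is delicate because $H(y,\cdot)$ is discontinuous across $\Sigma$, and convolution in $y$ spreads plastic mass located on $\Sigma$, where $H=H_1$, into $\ys_2$, where $H_2\ge H_1$ is larger. Here I would use the ordering \eqref{eq:ordK}. Before convolving, translate the part of $P$ carried by $\Sigma$ slightly into $\ys_1$, using local Lipschitz graphs of $\Sigma$ and a finite partition of unity on $\ys$. The mass then lands in the phase of lowest dissipation. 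The compensating term is absorbed through the $y$-translation of $\mu$, so that \eqref{eq:adm1} still holds with a modified $\mu$. After this, $\hshom$ converges, rather than merely being lower semicontinuous, by Reshetnyak continuity, because the limit measure charges no discontinuity set of $H$ in a bad way.
\end{enumerate}

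The outcome is a sequence $(u_k,E_k,P_k)$ with $u_k$ smooth, $P_k\in L^1$ with $P_k=0$ near the boundary, $\mu_k$ smooth in $(x,y)$, and $\eshom(u_k,E_k,P_k)\to\eshom(u,E,P)$.

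For regular data, the recovery sequence is the classical oscillating-test-function ansatz:
\[
u_\e(x)=u(x)+\e\,\mu(x,x/\e),\qquad p_\e(x)=P(x,x/\e),\qquad e_\e=Du_\e-p_\e .
\]
By \eqref{eq:adm1}, $e_\e(x)=E(x,x/\e)+\e\nabla_x\mu(x,x/\e)$. The boundary conditions hold since $\mu=0$ and $P=0$ outside $\Omb$. Standard two-scale convergence for admissible (Carathéodory, smooth in $x$) test functions then gives $e_\e\weakdue E$ and $p_\e\weakduest P$, together with convergence of energies $\qs_\e(e_\e)\to\qshom(E)$ and $\hs_\e(p_\e)\to\hshom(P)$. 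The dissipation converges because $\int H(x/\e,P(x,x/\e))dx\to\int\!\!\int H(y,P)\,dxdy$ for $P$ continuous in $x$. The step I expect to be the main obstacle is the $y$-regularization near the interface $\Sigma$ while preserving the kinematic constraint \eqref{eq:adm1}. To handle it I would first try the "shift into $\ys_1$" device described above, relying on $H_1\le H_2$.
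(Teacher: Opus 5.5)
Your part (a), the shift of the interfacial plastic mass into $\ys_1$ before convolving in $y$, and the oscillating ansatz followed by a diagonal argument all match the paper. Your sign $u+\e\mu(x,x/\e)$ is in fact the one consistent with \eqref{eq:adm1}.

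The genuine gap is your first density step. Translating the pieces near $\Gamma_d$ \emph{outward}, into $\Om'\setminus\Omb$, carries values $u\neq w$, $P\neq 0$, $\mu\neq 0$ into the region where \eqref{eq:wOm'bis} prescribes $(w,\nabla w,0)$. This includes the boundary plastic strain on $\Gamma_d\times\ys$. The translated triplet therefore leaves $\ashom(w,\Om')$: the move does not make room for mollification, it destroys admissibility. The translation must go inward, as in the paper. Reduce to $w=0$ and extend $(u,E,P,\mu)$ by zero off $\Omb$. Then translate $(\psi_iu,\psi_iE,\psi_iP+(u\nabla\psi_i)\,\leb^N_x\otimes\leb^N_y)$, together with $\psi_i\mu$, by $\delta\tau_i$, where $(\Omb\cap V_i)+\delta\tau_i\subset\subset\Om$. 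Only then does the configuration vanish on a neighborhood of $\overline\Gamma_d$, with the Dirichlet jump relocated inside $\Om$. This relocation is unavoidable, because a plastic strain concentrated on the hypersurface $\Gamma_d\times\ys$ cannot be reproduced by $P(x,x/\e)$.

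This is also where \eqref{eq:reg-gd} really enters, which your sketch leaves unexplained. Extending by zero creates a jump of $u$ across the Neumann part $\partial\Om\setminus\Gamma_d$. That set lies on $\partial\Om'$, so the jump is not accounted for by $P$. To keep \eqref{eq:adm1} on $\R^N\times\ys$ you must add $-u\nu\,\hn\lfloor(\partial\Om\setminus\Gamma_d)\otimes\leb^N_y$ to $P$. The inward translation then drags the part of it near $\overline\Gamma_d$ into $\Om'$ as new plastic mass.

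This error does not go to zero with $\delta$. It is controlled by $\int_{(\cup_iV_i)\cap(\partial\Om\setminus\Gamma_d)}|u|\,d\hn$, which can be made small only by choosing the cover $\{V_i\}$ of $\overline\Gamma_d$ suitably. That choice is possible precisely because $\hn(\overline\Gamma_d\setminus\Gamma_d)=0$, and a diagonal argument then gives $\hshom(P_n)\to\hshom(P)$.

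Relatedly, since $\partial\Om\setminus\Gamma_d\subset\partial\Om'$, plain convolution in $x$ is not available up to $\partial\Om'$; the paper uses a Meyers--Serrin-type localization with $x$-dependent kernels. The Riemann--Lebesgue step also needs some control of the fields near $\partial\Om'$. The paper secures this with a last cutoff that makes $E_n=\mu_n=0$ and $P_n=Du_n\otimes\leb^N_y$ there.
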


The proof of the first item in Theorem \ref{thm:2sc-gl} is immediate in view of Theorem \ref{prop:lsc-hs}. It remains to prove the second item in the theorem. This will be achieved through the concatenation of three lemmata.

\begin{lemma}\label{lem:trans1}
For any $(u,E,P)\in \ashom(0,\Om')$, there exists a sequence $(u_n,E_n,P_n)\in \ashom(0,\Om')$ with 
$$
\left\{\begin{array}{ll}
u_{n}\weakst u&\qquad \text{weakly* in }BV(\Om')\\[2mm]
E_{n}\rightarrow  E&\qquad \text{strongly in }L^2(\Om'\times \ys;\R^N)\\[2mm]
P_{n}\weakst P&\qquad\text{weakly* in }\ms_b(\Om'\times \ys;\R^N)
\end{array}\right.
$$
such that $|P_n|(\Om'\times \Sigma)=0$ and $\lim_{n\to+\infty }\hshom(P_n)=\hshom(P)$.
\end{lemma}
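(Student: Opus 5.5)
The plan is to move the mass that $P$ puts on the interface $\Om'\times\Sigma$ into the phase $\ys_1$ by a small translation in $y$, keeping the kinematic constraint \eqref{eq:adm1} intact. Since $H=H_1$ on $\Sigma$ and $H_1\le H_2$, pushing the interface mass into the weak phase $\ys_1$ should not increase the dissipation.

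\textbf{Step 1: localization in $y$.} Because $\Sigma$ is a Lipschitz interface in the torus, I will cover $\Sigma$ by finitely many open sets $U_1,\dots,U_k\subset\ys$. In each $U_j$, $\Sigma$ is a Lipschitz graph, and there is a fixed unit vector $\tau_j$ (a direction transverse to the graph) such that $y-t\tau_j\in\ys_1$ for every $y\in \Sigma\cap U_j$ and $t>0$ small. Let $\{\theta_j\}_{j=0}^k\subset C^\infty(\ys)$ be a partition of unity subordinate to $\{U_0,U_1,\dots,U_k\}$, with $U_0\cap\Sigma=\emptyset$.

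Write the constraint as $E+P-Du\otimes\leb^N_y=D_y\mu$. For each $j$ I localize by setting
$$
D_y(\theta_j\mu)=\theta_j D_y\mu+\mu\, D_y\theta_j .
$$
The correction term $\mu D_y\theta_j$ is absolutely continuous in $y$ with respect to $\eta\otimes\leb^N_y$, by Proposition \ref{lem:Xs}. The difficulty is that it is not in $L^2$ in $x$, since $\eta$ may be singular. I will therefore place this correction in the plastic part, not the elastic one. As a measure it does not charge $\Om'\times\Sigma$, because $\Sigma$ is $\leb^N$-null in $y$. The same holds for $\theta_jE$.

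\textbf{Step 2: translation.} For $t>0$ define
$$
P_t:=\theta_0P+\sum_{j\ge1}(\tau_{-t\tau_j})_\#(\theta_jP)+R_t ,
$$
where $(\tau_{-t\tau_j})_\#$ denotes the push-forward under the $y$-translation $y\mapsto y-t\tau_j$. Define $E_t:=\theta_0E+\sum_j E(\cdot,\cdot+t\tau_j)\theta_j(\cdot+t\tau_j)$. The remainder $R_t$ is built so that
$$
E_t+P_t-Du\otimes\leb^N_y=D_y\mu_t,\qquad \mu_t:=\theta_0\mu+\sum_j \tau_{t\tau_j}(\theta_j\mu).
$$
Since $\int_\ys$ of $D_y(\theta_j\mu)$ and of the translated pieces vanish, $R_t$ can be chosen with zero $y$-average. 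Concretely, $R_t$ collects the differences between translated and untranslated versions of $\mu D_y\theta_j$ and of $\theta_j Du\otimes\leb^N_y$. These differences are absolutely continuous in $y$, so $R_t$ does not charge $\Om'\times\Sigma$.

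Checking that $\mu_t\in\xs(\Om')$ and that the conditions \eqref{eq:wOm'bis} are preserved is straightforward. Translations in $y$ do not act on $x$, so $P_t=0$ on $(\Om'\setminus\Omb)\times\ys$ still holds.

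By construction $|P_t|(\Om'\times\Sigma)=0$ for small $t$. The part of $\theta_jP$ on $\Sigma$ is moved into $\ys_1$. The rest of $\theta_jP$ has $y$-support in $U_j\setminus\Sigma$, and only countably many $t$ can carry it back onto $\Sigma$; I will pick $t_n\downarrow0$ avoiding those values.

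\textbf{Step 3: convergence.} As $t\to0$, $E_t\to E$ strongly in $L^2$ by continuity of translations, and $R_t\to0$ strongly in total variation by continuity of translations in $L^1(\ys)$ applied to the $\eta$-disintegrated densities. Hence $P_t\weakst P$ and $\mu_t\to\mu$.

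For the energy, $\liminf\hshom(P_t)\ge\hshom(P)$ follows from Reshetnyak's lower semicontinuity theorem. For the $\limsup$ I use convexity and one-homogeneity of $H$ in $\xi$, which give
$$
\hshom(P_t)\le\sum_j\int H(y-t\tau_j,\cdot)\,d|\theta_jP|+|R_t|\to\cdots .
$$
Mass on $\Sigma$ moves into $\ys_1$, where $H=H_1=H(y,\cdot)$, so its cost is unchanged. Mass in $\ys_i$ stays in $\ys_i$ for small $t$ (pointwise in $y$), with $H$ unchanged there. Dominated convergence then gives the $\limsup$.

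\textbf{Main obstacle.} The step I expect to be delicate is the pointwise claim that mass in $\ys_i$ stays in $\ys_i$. Some mass of $\theta_jP$ in $\ys_1$ near $\Sigma$ might be translated into $\ys_2$. This does not happen for small $t$ pointwise, but it requires the Lipschitz-graph choice of $\tau_j$ uniformly on $U_j$, a cone condition: $\ys_1\cap U_j-t\tau_j\subset\ys_1$. Together with the upper bound $H\le c_4|\xi|$ this gives dominated convergence.
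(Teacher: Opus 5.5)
Your proof is correct, and its architecture matches the paper's. Both arguments use:
a partition of unity on $\ys$ subordinate to a cover of $\Sigma$;
$y$-translations of the pieces near $\Sigma$ that push the interface mass into $\ys_1$;
the cutoff corrections (built from $\mu\,D_y\theta_j$, $\theta_j\,Du\otimes\leb^N_y$ and their translates) placed in the plastic part, where they vanish in total variation and do not charge $\Om'\times\Sigma$;
and a choice of $t_n\downarrow 0$ that avoids the countably many parameters carrying off-interface mass back onto $\Sigma$.

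The genuine difference is the dissipation step. The paper chooses the cover so that $|P|(\Om'\times(\cup_i V_i\setminus\Sigma))<\e$, as in \eqref{eq:Peps}. It then applies Reshetnyak's continuity theorem only to the translated interface part, which lives where $H=H_1$ both before and after translation. The remaining mass near $\Sigma$ is bounded by $C\e$, and the proof concludes with a diagonal argument in $\e$.

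You instead observe that for every $y\in\text{supp}\,\theta_j$ one has $H(y-t\tau_j,\cdot)=H(y,\cdot)$ once $t$ is small enough. This holds because the phases are open, and points of $\Sigma$ are pushed into $\ys_1$, where $H=H_1=H(y,\cdot)$. Dominated convergence with the bound $c_4$ then gives $\hshom((\tau_{-t\tau_j})_\#(\theta_jP))\to\hshom(\theta_jP)$ for each $j$. Summing, since the $\theta_jP$ share the polar of $P$, yields $\limsup_n\hshom(P_{t_n})\le\hshom(P)$ directly. This dispenses with \eqref{eq:Peps} and the diagonalization, so your route is slightly more economical.

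Your announced ``main obstacle'' is not actually an obstacle. Eventual equality on the open phases is automatic and needs no cone condition. Uniformity in $t$ is needed only at $\Sigma$, to ensure that for each fixed $t_n$ the interface mass lands in $\ys_1$ rather than back on $\Sigma$. Also keep the factor $c_4$ in front of $|R_t|$ in your subadditivity bound.
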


\begin{proof}
 We divide the proof into three steps.

\vskip10pt\noindent  Step 1-- {\sf Localization and translations on the torus.}
Consider $\mu\in \xs(\Om')$ associated with  $(u,E,P)$. 
\par
Recalling \eqref{eq:dec-mu} in Proposition \ref{lem:Xs}, we define the $y$-translation of $\mu$ by a vector $\tau$ as
$$
\mu_\tau=\mu_x(y-\tau)(\eta\otimes\leb^N_y).
$$
Then, testing $D_y\mu_\tau$ by $\ph(x)\Phi(y)$ with $\ph,\Phi$ smooth and compactly supported, it is easily seen that
$$
D_y\mu_\tau= E_\tau+P_\tau-Du\otimes \leb^N_y
$$
with $E_\tau$ the translate of $E(x,\cdot)$ in the direction $\tau$ and $P_\tau$ the push-forward of $P$ in the direction 
$0\times\tau$. We conclude that $\mu_\tau\in \xs(\Om')$ is associated with $(u, E_\tau, P_\tau)\in \ashom(0,\Om')$ in the sense of Definition \ref{set:Aw2}.

 Given $\psi\in C^\infty(\ys)$, 
$$
D_y(\psi\mu)= D_y\psi\mu+\psi(E+P-Du\otimes\leb^N_y)
$$
so that we deduce that $(u,\psi E, \psi P+(1-\psi)Du\otimes\leb^N_y+D_y\psi\mu)$ and  $(0,\psi E, \psi P-\psi Du\otimes\leb^N_y+D_y \psi\mu)$ are both elements of $\ashom(0,\Om')$, since $\mu=0$ on $(\Om'\setminus \Omb) \times \ys$. The associated element of $\xs(\Om')$  according to \eqref{eq:dec-mu}  is in both cases given by   $\psi\mu-\left(\int_\ys \psi(y)\mu_x(y)dy\right) \eta\otimes\leb^N_y$ , so as to enforce the zero mean condition in \eqref{eq:mux-bd}.

\EEE
\par
Fix $\e>0$  and consider a finite covering $\{V_i\}_{i=0,\dots,k}$ of $\ys$ by open sets in $\ys$ such that $\Sigma\subset\cup_{i=1,...,k} V_i$, $V_0\cap\Sigma=\emptyset$ and 
\begin{equation}
\label{eq:Peps}
|P|(\Om'\times (\cup_{i=1}^k V_i \setminus \Sigma))<\e.
\end{equation}
 Also consider an associated partition of unity $\{\psi_i\}_{i=0,..., k}$ with  $\psi_i(y)\in C^\infty_c(V_i),\; 0\le \psi_i\le 1$ and $\sum_{i=0,...,k}\psi_i=1$. Note that $\sum_{i=1}^k\psi_i=1$ in a neighborhood of $\Sigma$.  Because  the phase $\ys_1$ is Lipschitz, we may assume that  there exists for each $i\in \{1,...,k\}$ a translation $\tau_i$ such that,  for every  $0<\delta<1$, 
\bel{eq:trans-y1}
 ({\overline\ys}_1 \cap V_i) +\delta\tau_i\subset\!\subset \ys_1.
\ee
We now superimpose the two localizations of $\mu$ proposed above, that is, we localize through the $\psi_i$ adopting the first one  for $i=0$ and the second one for $i=1,\dots,k$, and then translate for a sequence $\delta_n\tau_i$  with $\delta_n\searrow 0$ \EEE for $i=1,\dots,k$. We obtain the sequence $(u,E_n,P_n)\in\ashom(0,\Om')$ with
\bel{eq:decomp}
\begin{cases}
E_n:= \psi_0E+\sum_{i=1}^k(\psi_iE)_{\delta_n\tau_i}\\[2mm]
P_n:=\psi_0P+(1-\psi_0)Du\otimes\leb^N_y+D_y\psi_0\mu+\sum_{i=1}^k(\psi_iP)_{\delta_n\tau_i}\\[2mm]\qquad\qquad\qquad\qquad-\sum_{i=1}^k(\psi_i)_{\delta_n\tau_i}Du\otimes\leb^N_y+\sum_{i=1}^k(D_y\psi_i\mu)_{\delta_n\tau_i}.
\end{cases}\ee

\vskip10pt\noindent Step 2 -- {\sf Convergence of the fields.}  Let $(u,E_n,P_n)\in\ashom(0,\Om')$ be the sequence constructed in the previous step.
It is clear that
$$
E_n\to E, \; \mbox{ strongly in } L^2(\Om'\times\ys;\R^N).
$$
Now, as far as $P_n$ is concerned, 
\begin{equation}
\label{eq:mu1n}
(1-\psi_0)Du\otimes\leb^N_y-\sum_{i=1}^k(\psi_i)_{\delta_n\tau_i}Du\otimes\leb^N_y\to 0 
\qquad\text{strongly in $\ms_b(\Om'\times \ys;\R^N)$}
\end{equation}
since $1-\sum_{i=0.,...,k}\psi_i=0$, and, in view of \eqref{eq:dec-mu},
\begin{equation}
\label{eq:mu2n}
D_y\psi_0\mu+\sum_{i=1}^k(D_y\psi_i\mu)_{\delta_n\tau_i}\to 0 
\qquad\text{strongly  in $\ms_b(\Om'\times \ys;\R^N)$}
\end{equation}
since translations converge strongly in $L^1(\Om'\times\ys; \eta\otimes \leb^N_y)$ and $\sum_{i=0}^kD_y\psi_i=0$. 
 Finally the term $\psi_0P+\sum_{i=1}^k(\psi_iP)_{\delta_n\tau_i}$
 goes to $P$ \wstar in $\ms_b(\Om'\times \ys;\R^N)$ for the same reason and since $\sum_{i=0}^k\psi_i=1.$
 We conclude that
$$
P_n\weakst P  \qquad\mbox{weakly* in } \ms_b(\Om'\times\ys;\R^N).
$$
Further, since, recalling \eqref{eq:dec-mu},  the measure 
$$(1-\psi_0)Du\otimes\leb^N_y+D_y\psi_0\mu-\sum_{i=1}^k(\psi_i)_{\delta_n\tau_i}Du\otimes\leb^N_y+\sum_{i=1}^k(D_y\psi_i\mu)_{\delta_n\tau_i}$$ 
is absolutely continuous with respect to $(|Du|+|\eta|)\otimes\leb^N_y$, that measure does not charge $\Om'\times  \Sigma$, so that, in view of \eqref{eq:decomp} and because $\psi_0\equiv 0$ on $\Sigma$,
\begin{multline*}
|P_n|(\Om'\times  \Sigma )\le \sum_{i=1}^k|(\psi_iP)_{\delta_n\tau_i}| (\Om'\times   \Sigma )=
\sum_{i=1}^k|(\psi_iP)| (\Om'\times ( \Sigma +\delta_n\tau_i))\\
\le \sum_{i=1}^k|P| (\Om'\times ( (\Sigma \cap V_i)+\delta_n\tau_i)).
\end{multline*}
$$
$$ 
We then choose $\delta_n\searrow 0$ such that $|P|(\Om'\times ( (\Sigma \cap V_i) +\delta_n\tau_i))=0$ for all $n$'s and $i$'s  (as the sets $(\Sigma \cap V_i)+\delta\tau_i)$ are mutually disjoint as $\delta$ varies, being translations of a Lipschitz graph) and obtain 
$$
|P_n|(\Om'\times\Sigma)=0.\medskip
$$

\vskip10pt\noindent Step 3 -- {\sf Convergence of the dissipation.}
We remark that, by sub-additivity of $\hshom$ and in view of \eqref{eq:decomp},
$$
\hshom(P_n)\le \hshom\bigg(\psi_0P+\sum_{i=1}^k(\psi_iP)_{\delta_n\tau_i}\bigg)+ r_n
$$
where
\begin{multline*}
r_n:= \hshom\bigg((1-\psi_0)Du\otimes\leb^N_y-\sum_{i=1}^k(\psi_i)_{\delta_n\tau_i}Du\otimes\leb^N_y\bigg)
+\hshom\bigg(D_y\psi_0\mu+\sum_{i=1}^k(D_y\psi_i\mu)_{\delta_n\tau_i}\bigg) \longrightarrow 0
\end{multline*} 
 because of \eqref{eq:mu1n} and \eqref{eq:mu2n},  together with the bound $\hshom(Q)\le c_4|Q|(\Om'\times \ys)$ (see \eqref{eq:bdsHper}).
We obtain
\begin{multline*}
\limsup_{n\to+\infty} \hshom(P_n)\le \limsup_{n\to+\infty}\hshom\bigg(\psi_0P+\sum_{i=1}^k(\psi_iP)_{\delta_n\tau_i}\bigg)\\
 \le \hshom\left(\psi_0P\right)+ \limsup_{n\to+\infty} \hshom\left( \sum_{i=1}^k (\psi_iP)_{\delta_n\tau_i} \right).
\end{multline*}

Now, because the translations $\delta_n\tau_i, i=1,...,k$ satisfy \eqref{eq:trans-y1} while the dissipation functional is given by $H_1$ on $\overline{\ys_1}$,  and since translation of measures yields strict convergence, Reshetnyak's continuity theorem (see e.g. \cite[Theorem 2.39]{ambrosio.fusco.pallara} or \cite{spector10}) implies that 
$$
\lim_{n\to+\infty}\hshom\left( \sum_{i=1}^k(\psi_iP \lfloor {(\Om'\times \Sigma)})_{\delta_n\tau_i}\right)=\hshom\left( \sum_{i=1}^k(\psi_iP \lfloor {(\Om'\times \Sigma)})\right).
$$
Recalling \eqref{eq:Peps} we may thus write, for some $C>0$ independent of $n$, 
\begin{multline*}
\limsup_{n\to+\infty}\hshom\left( \sum_{i=1}^k(\psi_iP)_{\delta_n\tau_i}\right)\le \limsup_{n\to+\infty}\hshom\left(\sum_{i=1}^k (\psi_iP \lfloor {(\Om'\times \Sigma)})_{\delta_n\tau_i}\right)+C\e\\
=\hshom\left( \sum_{i=1}^k(\psi_iP \lfloor {(\Om'\times \Sigma)})\right)+C\e.
\end{multline*}
Consequently,
$$
\limsup_{n\to+\infty} \hshom(P_n)\le \hshom\left(\psi_0P\right)+\hshom\left( \sum_{i=1}^k(\psi_iP \lfloor {(\Om'\times \Sigma)})\right)+C\e.
$$
Now $\psi_0 P$ and $ \sum_{i=1}^k(\psi_iP \lfloor {(\Om'\times \Sigma)}$ have disjoint supports so that we infer that
$$
\limsup_{n\to+\infty} \hshom(P_n) \le \hshom\left(\psi_0P+ \sum_{i=1}^k(\psi_iP \lfloor {(\Om'\times \Sigma)})\right)+C\e.
$$
Using \eqref{eq:Peps} again we finally obtain
$$
\limsup_{n\to+\infty} \hshom(P_n)  \le \hshom \left(\sum_{i=0}^k \psi_i P\right)+2C\e.
$$
Letting $\e\to 0$, and using a diagonal argument, we conclude that the approximating sequence $(u,E_n,P_n)$ satisfies
$$
\limsup_{n\to+\infty} \hshom(P_n) \le \hshom(P).
$$
In view of the lower semicontinuity of $\hshom$ with respect to weak* convergence we get
$$
\lim_{n\to+\infty}\hshom(P_n)=\hshom(P),
$$
so that the proof is concluded.
\EEE
 \end{proof}
 
 \bigskip
 \begin{lemma}\label{lem:trans2}
 Assume  \eqref{eq:reg-gd}.
 Consider $(u,E,P)\in \ashom(0,\Om')$ with $|P|(\Om'\times\Sigma)=0$. There exists a sequence $(u_n,E_n,P_n)\in \ashom(0,\Om')$ with 
$$
\left\{\begin{array}{ll}
u_{n}\weakst u&\qquad \text{weakly* in } BV(\Om')\\[2mm]
E_{n}\to  E&\qquad \text{strongly in }L^2(\Om'\times \ys;\R^N)\\[2mm]
P_{n}\weakst P&\qquad\text{ weakly*  in } \ms_b(\Om'\times \ys;\R^N)
\end{array}\right.
$$
such that $(u_n, E_n,P_n)\equiv 0$  on a neighborhood $U_n$ of  $\overline{\Gamma}_d$, $|P_n|(\Om'\times\Sigma)=0$  and $\hshom(P_n)\rightarrow \hshom(P).$
 \end{lemma}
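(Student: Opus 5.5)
We need $(u_n,E_n,P_n)$ vanishing near $\overline\Gamma_d$. Since $w=0$, outside $\Omb$ everything already vanishes. The natural plan is to push the configuration slightly "inward" into $\Om$ away from $\Gamma_d$, using a localization in $x$ combined with translations in $x$, exactly as Lemma \ref{lem:trans1} localized and translated in $y$. The $x$-translations commute with the $y$-structure, so translating $(u,E,P,\mu)$ in $x$ preserves \eqref{eq:adm1} and the property $|P|(\Om'\times\Sigma)=0$ (the translation is $\tau\times 0$). Also, $\hshom$ depends on $y$ only, so it is invariant under $x$-translations; this is what makes the energy convergence easy.

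\textbf{Step 1: covering of $\overline\Gamma_d$.} Fix $\e>0$. Since $\Om$ is Lipschitz, cover $\overline\Gamma_d$ by finitely many open sets $W_1,\dots,W_k$ in which $\partial\Om$ is a Lipschitz graph, with a direction $\tau_i$ pointing "into" $\Om$ such that $(\overline\Om\cap W_i)+\delta\tau_i\subset\subset \Om$ for small $\delta>0$ (a slightly shrunk set). Let $W_0$ be an open set with $\overline{W_0}\cap\overline\Gamma_d=\emptyset$, and complete the cover of $\Om'$ by $W_0,\dots,W_k$. Using \eqref{eq:reg-gd}, I would arrange that the pieces of $\partial\Om\setminus\Gamma_d$ near $\overline\Gamma_d$ carry small mass. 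Specifically, $|P|$ and $|Du|$ on $(\partial\Om\cap\bigcup_{i\ge1}W_i\setminus\Gamma_d)\times\ys$ should be less than $\e$. Since $P$ lives on $\Om\cup\Gamma_d$, what is really needed is that $|P|\big(((\bigcup_{i\ge1} W_i)\setminus\Gamma_d)\cap\Om\times\ys\big)$ and the $\eta$- and $|Du|$-masses there are small. Shrinking the $W_i$ toward $\overline\Gamma_d$ achieves this, because $\hn(\overline\Gamma_d\setminus\Gamma_d)=0$ and $u\in BV$ give $|Du|(\overline\Gamma_d\setminus\Gamma_d)=0$; the same argument controls $P$ and $\eta$. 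Take a partition of unity $\varphi_i\in C_c^\infty(W_i)$.

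\textbf{Step 2: construction.} Set $u_n:=\varphi_0u+\sum_{i\ge1}(\varphi_iu)(\cdot+\delta_n\tau_i)$, where the translation moves the support inward, so that near $\Gamma_d$ the field is pushed away from the boundary and $u_n=0$ on a neighborhood $U_n$ of $\overline\Gamma_d$. Define analogously
$$E_n:=\varphi_0E+\sum_{i\ge1}(\varphi_iE)_{\delta_n\tau_i},\qquad \mu_n:=\varphi_0\mu+\sum_{i\ge1}(\varphi_i\mu)_{\delta_n\tau_i},$$
and let $P_n$ absorb the terms $u\nabla\varphi_i\otimes\leb^N_y$ (appropriately translated) so that \eqref{eq:adm1} holds. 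The extra terms sum to $\sum_i \nabla\varphi_i\,u=0$ up to translation errors, and tend to $0$ strongly in $\ms_b$ since translations are continuous in $L^1$. Since $\mu_n$ is a cutoff in $x$ only, the zero-mean condition is preserved and $\mu_n\in\xs(\Om')$. Then $E_n\to E$ strongly, $u_n\weakst u$, and $P_n\weakst P$. Moreover $|P_n|(\Om'\times\Sigma)=0$ because all contributions are either translates in $x$ of $P\lfloor$ (not charging $\Om'\times\Sigma$) or absolutely continuous with respect to $\leb^N_x\otimes\leb^N_y$.

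\textbf{Step 3: energy (main obstacle).} By $x$-translation invariance of $\hshom$ and subadditivity,
$$\limsup_n \hshom(P_n)\le \hshom(\varphi_0P)+\sum_{i\ge1}\hshom(\varphi_iP)+o(1)=\hshom(P)+o(1),$$
using $\varphi_i\ge0$, $\sum\varphi_i=1$, and one-homogeneity, so that $\hshom(\varphi_iP)=\int\varphi_i H\,d|P|$. No Reshetnyak argument is needed here. The delicate point is the boundary: the part of $P$ living on $\Gamma_d\times\ys$, the jump $-u\nu\,\hn$, must be translated into $\Om$ without creating new dissipation. Because we translate the whole configuration rather than cutting $u$ off, the boundary jump moves to the translated surface, where it is still a jump of the translated $u$ to $0$. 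Its dissipation is preserved by translation invariance.

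The obstacle is where the translated graph $\partial\Om+\delta\tau_i$ overlaps the untranslated pieces in the transition zones of the $\varphi_i$ near $\partial\Gamma_d$. Mass created there is controlled by the $\e$-smallness from Step 1 via \eqref{eq:reg-gd}, with a diagonal argument in $\e$. Lower semicontinuity then gives $\hshom(P_n)\to\hshom(P)$.
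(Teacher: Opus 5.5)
\textbf{There is a genuine gap in the plastic-strain bookkeeping, at exactly the point where \eqref{eq:reg-gd} is needed.} Your architecture matches the paper's: a partition of unity near $\overline\Gamma_d$, inward $x$-translations of the localized pieces, the rest left fixed, and a diagonal argument in $\e$. The problem is this. Extended by $0$ outside $\Om$, $u$ jumps across all of $\partial\Om$. But $Du$ on $\Om'$, and hence $P$ through \eqref{eq:adm1}, only sees the jump across $\Gamma_d$, because $\partial\Om\setminus\Gamma_d$ lies outside $\Om'$. Once $\varphi_iu$ is translated into $\Om$, that jump is carried inside $\Om'$, so \eqref{eq:adm1} forces $P_n$ to contain
$$-\big(\varphi_i\,u\,\nu\,\hn\lfloor(\partial\Om\setminus\Gamma_d)\big)_{\delta_n\tau_i}\otimes\leb^N_y .$$
This term is neither a translate of $P$ nor one of the terms $u\nabla\varphi_i$; the paper builds it in by extending $P$ as in \eqref{eq:Pbdry}. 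It converges weakly* to $0$ in $\Om'\times\ys$, since it drifts to $\partial\Om'$. Its mass, however, is $\int_{\partial\Om\setminus\Gamma_d}\varphi_i|u|\,d\hn$, which does not depend on $n$. So the $o(1)$ in your Step~3 is wrong at fixed $\e$; the correct bound is
$$\limsup_{n}\hshom(P_n)\le \hshom(P)+c_4\int_{(\partial\Om\setminus\Gamma_d)\cap\bigcup_{i\ge1}W_i}|u|\,d\hn .$$

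\textbf{The smallness you finally settle on in Step~1 is the wrong one.} Masses of $|P|$, $\eta$, $|Du|$ in $(\bigcup_i W_i)\cap\Om$ are small for any cover shrinking to $\overline\Gamma_d$, without \eqref{eq:reg-gd}, and they do not bound the error above. Nor is the issue an ``overlap in transition zones'': by subadditivity, overlaps are harmless for the upper bound. What must be small is the trace of $u$ on the Neumann part of $\partial\Om$ near $\overline\Gamma_d$. Hypothesis \eqref{eq:reg-gd} gives $\int_{\overline\Gamma_d\setminus\Gamma_d}|u|\,d\hn=0$. Hence $\int_{(\bigcup_i W_i)\cap(\partial\Om\setminus\Gamma_d)}|u|\,d\hn=\int_{(\bigcup_i W_i)\cap(\partial\Om\setminus\overline\Gamma_d)}|u|\,d\hn$, which is $<\e$ once the $W_i$ are close enough to $\overline\Gamma_d$.

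\textbf{With these two corrections, your Step~3 works and is a genuinely different route.} The ingredients are $x$-translation invariance of $\hshom$, subadditivity, $\hshom(\varphi_iP)=\int\varphi_i\,H\big(y,\tfrac{P}{|P|}\big)\,d|P|$, and lower semicontinuity of $\hshom$; they give $\hshom(P_n)\to\hshom(P)$ after diagonalizing in $\e$. The paper instead proves strict convergence $|P_n|(\Om'\times\ys)\to|P|(\Om'\times\ys)$, splits into the two phases using $|P|(\Om'\times\Sigma)=0$, and applies Reshetnyak's continuity theorem on each phase. Your argument is simpler and uses $|P|(\Om'\times\Sigma)=0$ only to preserve that property. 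That property still holds, since the new term has the form $\lambda\otimes\leb^N_y$.
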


 \begin{proof}
 Let $\e>0$ and   introduce a finite covering $\{V_i\}_{i=1,...,k}$ of ${\overline \Gamma}_d$ by open sets  in such a way that
$$
\int_{(\cup_{i=1}^k V_i) \cap (\partial \Om \setminus \Gamma_d)}|u|\,d\hn=
\int_{(\cup_{i=1}^k V_i) \cap (\partial \Om \setminus \bar \Gamma_d)}|u|\,d\hn
<\e,
$$
where \eqref{eq:reg-gd} has been used in the equality above. 

Consider $\{\psi_i\}_{i=1,..., k}$
with  $\psi_i\in C^\infty_c(V_i),\; 0\le \psi_i\le 1$ and $\sum_{i=1}^k\psi_i=1$ in a neighborhood of ${\overline \Gamma}_d$.  Because $\pa\Om$ is Lipschitz,  we can assume that the $V_i$ have been chosen so that  there exists for each $i\in \{1,...,k\}$ a  translation vector $\tau_i \in \R^N$ such that, for  $0<\delta<1$,
\bel{eq:trans-x1}
({\overline\Om} \cap V_i)+\delta\tau_i\subset\!\subset \Om.
\ee

According to \eqref{eq:wOm'bis} and \eqref{eq:adm1},  $(u,E,P)$ and the associated $\mu$ are identically $0$ on $(\Om'\setminus{\overline\Om}) \times \ys$. We     extend $(u,E,P)$ and $\mu$  to $\R^N\times \ys$  as follows.    We set $u,E,P,\mu$  to be identically $0$ on $(\R^N\setminus{\overline\Om})\times \ys$  and define $P,\mu$ on $(\pa\Om\setminus\Gamma_d)\times \ys$ to be respectively
\begin{equation}
\label{eq:Pbdry}
P:=-u \nu \hn\lfloor (\pa\Om\setminus\Gamma_d) \otimes \leb^N_y \qquad\text{and}\qquad \mu:=0
\end{equation}
Such an extension  preserves the compatibility relation \eqref{eq:adm1}
 on $\R^N\times \ys$. Further, $|P|(\R^N\times\Sigma)=0$.
\par
Correspondingly, for each $i=1,\dots,k$,  the triplet 
$
\left(\psi_i u, \psi_iE,\psi_iP+D_x\psi_iu
\otimes\leb^N_y\right)
$
 satisfies the same compatibility condition with respect to $\psi_i \mu$.
\par
We translate that triplet by a sequence $\delta_n\tau_i$ with $\delta_n\searrow 0$  while leaving the remaining contribution $\left(1-\sum_{i=1}^k\psi_i\right)(u,E,P)$ fixed. Summing all contributions  generates a sequence $(u_n,E_n,P_n)$ with
\bel{eq:transx3}
\left\{\begin{array}{lccl}
u_n&\weakst& u&\text{weakly* in }BV(\R^N)\\[2mm]
E_n&\to& E&\text{ strongly  in }L^2(\R^N\times \ys;\R^N)\\[2mm]
P_n&\weakst& P&\text{weakly* in }\ms_b(\R^N\times \ys;\R^N)
\end{array}\right.
\ee
as $n\to+\infty$. 
\par
 Restricting $(u_n,E_n,P_n)$ to $\Om'\times\ys$ yields an element of $\ashom(0,\Om')$ identically vanishing in a neighborhood $U_n$ of $\Gamma_d$. Moreover, $|P_n|(\Om'\times \Sigma)=0$.
\par
Further, note that, for  $\eta\in \ms_b(\R^N\times\ys)$ and $\tau\in\R^N$, the $x$-translate $\eta_{\delta_n\tau}$ of $\eta$ by $\delta_n\tau$ satisfies for every open set $A\subseteq \R^N$
$$
\limsup_{n\to+\infty}|\eta_{\delta_n\tau}|(A\times\ys)\le |\eta|(\overline A\times\ys).
$$
Thus, because $P_n$ only involves translations and since the compatibility condition is satisfied by each $\psi_iP$,  \eqref{eq:Pbdry} implies that 
\begin{multline}\label{eq:transx6}
\limsup_{n\to+\infty}|P_n|(\Om'\times\ys)\le |P|(\Om'\times\ys)+\int_{\pa\Om\setminus\Gamma_d}
\left|\left(\sum_{i=1}^k\psi_i\right)u\right|d\hn\\[2mm]
 \le |P|(\Om'\times\ys)+\int_{(\cup_{i=1}^k V_i)\cap (\partial\Om \setminus \Gamma_d)}|u|\,d\hn< |P|(\Om'\times\ys)+\e.
\end{multline}
 Letting $\e\to 0$ and using a diagonal argument, we can assume that $\limsup_{n\to+\infty}|P_n|(\Om'\times \ys)\le |P|(\Om'\times \ys)$,   so that
\bel{eq:scP_n}
|P_n|(\Om'\times \ys)\to |P|(\Om'\times \ys).
\ee
Since $|P|$ does not charge $\Om'\times \Sigma$ and in view of the lower semicontinuity of the total variations on open sets, \eqref{eq:scP_n} yields 
\begin{equation}
\label{eq:mass1}
\begin{cases}|P_n|(\Om'\times \ys_1) \to |P|(\Om'\times \ys_1)
\\[3mm]
|P_n| (\Om'\times \ys_2) \to |P|(\Om'\times \ys_2).\end{cases}
\end{equation}
By construction $P_n$ does not charge $\Om'\times \Sigma$ and, further, the dissipation on the phases is given by the continuous functions $H_1$ and $H_2$.  In view of \eqref{eq:transx3}, \eqref{eq:mass1},  Reshetnyak's continuity theorem implies 
\begin{multline*}
\lim_{n\to+\infty} \hshom(P_n)=\lim_{n\to+\infty} \hshom(P_n \lfloor (\Om'\times \ys_1))+\lim_{n\to+\infty} \hshom(P_n \lfloor (\Om'\times \ys_2))\\
=\hshom(P \lfloor (\Om'\times \ys_1))+\lim_{n\to+\infty} \hshom(P\lfloor (\Om'\times \ys_2))=\hshom(P).
\end{multline*}
This concludes the proof.
\end{proof}

\begin{remark}
Inequality \eqref{eq:transx6} is the only place where the regularity \eqref{eq:reg-gd} is used in this work.
So lemmata \ref{lem:trans1} above and \ref{lem:trans3} below hold true absent \eqref{eq:reg-gd}.\hfill\P\end{remark}

In the following Lemma we  employ the regularization by convolution of measures on the torus $\ys$, which, as shown in the Appendix,  is  analogous to that of the euclidean setting. By a smooth configuration in $\ashom(0,\Om')$, we mean that the displacement and the elastic strain are smooth functions, and that the plastic strain is a measure absolutely continuous with respect to $\leb^N_x\otimes\leb^N_y$ with a smooth density.

 \begin{lemma}\label{lem:trans3}
 Consider $(u,E,P)\in \ashom(0,\Om')$ with $|P|(\Om'\times\Sigma)=0$ and $(u,E,P)\equiv 0$ in a neighborhood of 
 $\overline{\Gamma}_d$. There exists a smooth sequence $(u_n,E_n,P_n)\in \ashom(0,\Om')$ with
$$
\left\{\begin{array}{ll}
u_{n}\weakst u&\qquad \text{weakly* in } BV(\Om')\\[2mm]
E_{n}\to E&\qquad \text{strongly in }L^2(\Om'\times \ys;\R^N)\\[2mm]
P_{n}\weakst P&\qquad\text{weakly* in } \ms_b(\Om'\times \ys;\R^N)
\end{array}\right.
$$
such that $(u_n, E_n,P_n)\equiv 0$  on a neighborhood $U_n$ of $\overline{\Gamma}_d$ and $\hshom(P_n)\to \hshom(P)$.
 \end{lemma}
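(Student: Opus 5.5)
We regularize by convolution in both variables: with a standard mollifier $\rho_n(x)$ on $\R^N$ and a mollifier $\theta_n(y)$ on the torus $\ys$ (as in the Appendix), we set
$$
u_n:=\rho_n*u,\qquad E_n:=(\rho_n\otimes\theta_n)*E,\qquad P_n:=(\rho_n\otimes\theta_n)*P,\qquad \mu_n:=(\rho_n\otimes\theta_n)*\mu ,
$$
where $\mu\in\xs(\Om')$ is the element associated with $(u,E,P)$. Convolution commutes with $D_y$, and $(\rho_n\otimes\theta_n)*(Du\otimes\leb^N_y)=D(\rho_n*u)\otimes\leb^N_y$ because $\theta_n*\leb^N_y=\leb^N_y$. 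Hence the compatibility \eqref{eq:adm1} is preserved:
$$
E_n+P_n-Du_n\otimes\leb^N_y=D_y\mu_n .
$$
The zero-mean and $D_y$-boundedness conditions defining $\xs$ are also preserved. Since $(u,E,P,\mu)$ vanish on a neighborhood of $\overline\Gamma_d$ and on $(\Om'\setminus\Omb)\times\ys$, we first extend them by $0$ to $(\R^N\setminus\Om')\times\ys$; the extension stays compatible because everything already vanishes near $\partial\Om'$. For $n$ large the regularized fields then vanish on a smaller neighborhood $U_n$ of $\overline\Gamma_d$ and on $(\Om'\setminus\Omb)\times\ys$ away from a vanishing layer near $\partial\Om\setminus\Gamma_d$. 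This layer is a problem, since mollification spreads mass outside $\Omb$ there.

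To handle the layer, I would precede the mollification with an $x$-translation inward, as in Lemma~\ref{lem:trans2}. Using a finite covering of $\partial\Om\setminus U$ by sets $V_i$ with inward translations $\tau_i$ satisfying \eqref{eq:trans-x1}, and a partition of unity in $x$, we translate the localized pieces $(\psi_iu,\psi_iE,\psi_iP+D_x\psi_i u\otimes\leb^N_y)$ by $\delta_n\tau_i$. After this step the configuration is compactly supported in $\Om\times\ys$, plus the part outside $\Omb$. The boundary contribution $-u\nu\hn\lfloor(\partial\Om\setminus\Gamma_d)\otimes\leb^N_y$ is now moved inside and is part of $P$ (extended as in \eqref{eq:Pbdry}). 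We then mollify with $\rho_n$ of radius much smaller than $\delta_n$, and restrict to $\Om'\times\ys$. The convergences $u_n\weakst u$, $E_n\to E$ strongly in $L^2$ and $P_n\weakst P$ follow from standard properties of translations and convolutions. We also get $|P_n|(\Om'\times\ys)\to|P|(\Om'\times\ys)$, by the same estimate \eqref{eq:transx6} and because $|P|(\partial\Om\times\ys)$ is already accounted for.

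The main obstacle is the convergence of the dissipation $\hshom(P_n)\to\hshom(P)$, because $H$ is discontinuous in $y$ across $\Sigma$. Mollification in $y$ smears mass of $P$ across $\Sigma$, so mass lying in $\ys_2$ near $\Sigma$ can end up in $\ys_1$ and vice versa. My plan is the following. Lower semicontinuity (Reshetnyak) gives $\hshom(P)\le\liminf\hshom(P_n)$. For the upper bound, Jensen's inequality for the convex, one-homogeneous $H(y,\cdot)$ gives
$$
\hshom(P_n)\le\int H\Bigl(y,\frac{P}{|P|}\Bigr)\text{ averaged over the mollified }y\text{-neighbourhood}\,d|P|.
$$
More precisely, the density of $P_n$ at $(x,y)$ is an average of $P$ over $(x,y)-\mathrm{supp}(\rho_n\otimes\theta_n)$, and $H(y,\cdot)$ is evaluated at the point $y$ rather than at the source point. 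Pointwise in the source point $y'$, $\limsup_n H(y'+z_n,\xi)\le H(y',\xi)$ fails only on $\Sigma$, since $H$ is continuous on each open phase. Because $|P|(\Om'\times\Sigma)=0$, dominated convergence (with the bound $c_4|\xi|$) then yields
$$
\limsup_n\hshom(P_n)\le\hshom(P).
$$
This is exactly why Lemma~\ref{lem:trans1} was needed. If the pointwise Jensen step turns out to be awkward, a fallback is to invoke Reshetnyak's continuity theorem separately on $\Om'\times\ys_1$ and $\Om'\times\ys_2$, using the strict convergence $|P_n|\to|P|$ and $|P|(\Om'\times\Sigma)=0$ to split the mass as in \eqref{eq:mass1}.
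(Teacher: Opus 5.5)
\textbf{There is a genuine gap, created by the inward translation you add to handle the boundary layer.}

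Because $\partial\Om\cap\Om'=\Gamma_d$, the traction-free part $\partial\Om\setminus\Gamma_d$ lies on $\partial\Om'$. Neither $|P|(\Om'\times\ys)$ nor $\hshom(P)$ sees it, and the trace of $u$ there is free and in general nonzero. So your preliminary claim that everything vanishes near $\partial\Om'$ is already false.

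Extending by zero puts the term $-u\nu\hn\lfloor(\partial\Om\setminus\Gamma_d)\otimes\leb^N_y$ of \eqref{eq:Pbdry} into $P$. Translating inward the pieces that cover all of $\partial\Om\setminus U$ then moves this term into $\Om\times\ys$. There it becomes plastic strain of $P_n$: an interior jump of $u_n$ across translated copies of $\partial\Om\setminus\Gamma_d$, later smeared by the mollifier, with mass $\int_{\partial\Om\setminus\Gamma_d}|u|\,d\hn$ for every $n$. This mass only escapes to $\partial\Om'$ in the limit, which is why $P_n\weakst P$ survives, but it does not disappear. With your covering, \eqref{eq:transx6} bounds $\limsup_n|P_n|(\Om'\times\ys)$ only by $|P|(\Om'\times\ys)+\int_{\partial\Om\setminus\Gamma_d}|u|\,d\hn$. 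In Lemma \ref{lem:trans2} the error is below $\e$ only because the covering there is of $\overline\Gamma_d$ and \eqref{eq:reg-gd} holds.

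This is not a weakness of the estimate but a real obstruction. Take any admissible sequence vanishing near the whole of $\partial\Om$, and set $p_n(B):=P_n(B\times\ys)$. Then $Du_n=e_n+p_n$ with $e_n=\int_\ys E_n\,dy\to e$ in $L^2$. Extending by zero to $\R^N$ and using lower semicontinuity of the total variation gives $\liminf_n|P_n|(\Om'\times\ys)\ge\liminf_n|p_n|(\Om')\ge|p|(\Om')+\int_{\partial\Om\setminus\Gamma_d}|u|\,d\hn$. Now take $\mu=0$, $P=0$, $E(x,y)=\nabla u(x)$, with $u$ smooth, zero near $\overline\Gamma_d$ and nonzero on a portion of $\partial\Om\setminus\overline\Gamma_d$. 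Then $\hshom(P)=0$, while $\liminf_n\hshom(P_n)\ge c_3\int_{\partial\Om\setminus\Gamma_d}|u|\,d\hn>0$. So the claimed strict convergence and $\hshom(P_n)\to\hshom(P)$ both fail, however the dissipation is treated afterwards.

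\textbf{The missing idea: do not make the configuration vanish near $\partial\Om\setminus\Gamma_d$ at all.} Here ``smooth'' only means smooth in the open set $\Om'\times\ys$. The paper uses a Meyers--Serrin/Anzellotti--Giaquinta localization, with no translation:
\begin{itemize}
\item a partition of unity $\psi_i$ subordinate to the layers $A_i=\Om'_{i+1}\setminus\overline{\Om'_{i-1}}$ exhausting $\Om'$;
\item the pieces $(\psi_iu,\psi_iE,\psi_iP+D_x\psi_i\,u\,\leb^N_x\otimes\leb^N_y)$, compatible with $\psi_i\mu$;
\item each piece convolved with $\rho^x_{i,n}\rho^y_n$, with the $x$-radii tuned layer by layer.
\end{itemize}
Supports then stay inside $\Om'$ and away from $\overline\Gamma_d$. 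Nothing is pushed across $\partial\Om\setminus\Gamma_d$: the trace of $u$ is preserved rather than converted into plastic strain, and admissibility is kept. The corrector $\sum_i(D_x\psi_i\,u)\star\rho^x_{i,n}$ tends to zero because $\sum_iD_x\psi_i=0$, and $|P_n|(\Om'\times\ys)\le|P|(\Om'\times\ys)+o(1)$. The behaviour near $\partial\Om'$ is handled only afterwards, through the cutoffs $\ph_m$ in the proof of Theorem \ref{thm:2sc-gl}.

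\textbf{Your dissipation argument is fine once the construction is corrected.} On top of this construction, your Jensen plus dominated-convergence argument is correct. It is a legitimate alternative to the paper's splitting \eqref{eq:mass1} followed by Reshetnyak's continuity theorem on each phase. It uses only the sublinearity of $H(y,\cdot)$, the continuity of $H(\cdot,\xi)$ on the open phases, the bound $c_4|\xi|$ and $|P|(\Om'\times\Sigma)=0$.
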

\begin{proof}
 We proceed by localization in the variable $x$ together with regularization by convolutions in both $x$ and $y$ mimicking the standard density result for smooth functions in $BV$ (see \cite[Theorem 3.9]{ambrosio.fusco.pallara}).
\par
Define for $i\ge 1$ 
$$
\Om'_i:=\{x\in\Om': {\rm dist }(x,\pa\Om')>1/i\},
$$
and set
$$
A_1=\Om'_2, \qquad A_i:=\Om'_{i+1}\setminus{\overline\Om'}_{i-1} \text{ for }i\ge 2.
$$

Consider $\psi_i\in C^\infty_c(A_i)$ with $0\le\psi_i\le 1$ and $\sum_i\psi_i=1$ on $\Om'$ and note that, if $\mu \in \xs(\Om')$ is associated to $(u,E,P)$, then 
$
(\psi_iu,\psi_iE,\psi_iP+D_x\psi_iu\,\leb^N_x\otimes\leb^N_y)\in\ashom(0,\Om')
$
with associated $\psi_i \mu\in\xs(\Om')$. 
\par
We regularize by convolutions  each triplet above  in both $x$ and $y$ using regularization kernels $\rho^x_{i,n}(x)$ and $\rho^y_n(y)$ and superpose the result. 
Setting
 $$
 \begin{cases}
 u_n:=\sum_i(\psi_iu)\star\rho^x_{i,n}\\[2mm]
 E_n:=\sum_i(\psi_iE)\star(\rho^x_{i,n}\rho^y_n)\\[2mm]
 P_n:=\sum_i\{(\psi_iP)\star(\rho^x_{i,n}\rho^y_n)+(D_x\psi_iu\star\rho^x_{i,n})\} \leb^N_x\otimes\leb^N_y
 \end{cases}
 $$
 together with the associated $\mu_n:=\sum_i\{(\psi_i\mu)\star(\rho^x_{i,n}\rho^y_n)\}\leb^N_x\otimes\leb^N_y$, 
  we can tune the kernels $\rho^x_{i,n}(x)$ so that $(u_n, E_n, P_n)$ belongs to $\ashom(0,\Om')$, vanishes in a neighborhood $U_n$ of $\overline{\Gamma}_d$ -- since $(u,E,P)$ vanishes in a neighborhood $U$ of $\overline{\Gamma}_d$ -- and satisfies
 $$
\left\{\begin{array}{ll}
u_{n}\weakst u&\qquad \text{weakly* in }BV(\Om')\\[2mm]
E_{n}\to E&\qquad \text{strongly in }L^2(\Om'\times \ys;\R^N)\\[2mm]
P_{n}\weakst P&\qquad\text{weakly* in }\ms_b(\Om'\times \ys;\R^N)
\end{array}\right.
$$
with
$$
\left|\sum_i(D_x\psi_iu\star\rho^x_{i,n})\,\leb^N_x\otimes  \leb^N_y \right|(\Om'\times\ys)\to 0
$$
since $\sum_iD_x\psi_i=0$. 
 Moreover, in view of e.g. \cite[Theorem 2.2(b)]{ambrosio.fusco.pallara}, we can also require
$$
\left|[(\psi_i P)\star(\rho^x_{i,n}\rho^y_n)]\leb^N_x\otimes \leb^N_y\right|(\Om'\times\ys)\le \left|\psi_i P\right|(\Om'\times \ys),
$$
so that
\begin{multline*}
 \limsup_{n\to+\infty}|P_n|(\Om'\times\ys) 
 \le \sum_i  \limsup_{n\to+\infty}\left|[(\psi_i P)\star(\rho^x_{i,n}\rho^y_n)]\leb^N_x\otimes \leb^N_y\right|(\Om'\times\ys)\\
 +\limsup_{n\to+\infty}\left|\sum_i(D_x\psi_iu\star\rho^x_{i,n})\,\leb^N_x\otimes\leb^N_y\right|(\Om'\times\ys)
 \le \sum_i \left|\psi_i P\right|(\Om'\times \ys)=|P|(\Om'\times\ys).
\end{multline*}
We conclude that 
$$
|P_n|(\Om'\times \ys)\to |P|(\Om'\times \ys).
$$
Since $P_n$ and $P$ do not charge $\Om'\times \Sigma$,  the proof that  $\hshom(P_n)\rightarrow \hshom(P)$ is identical to that at the end of Lemma \ref{lem:trans2}.
\end{proof}
 
We are now in a position to prove Theorem \ref{thm:2sc-gl}.

\begin{proof}
Consider a triplet $(u,E,P)\in \ashom(0,\Om')$. Using successively the three previous lemmata and a diagonalization 
process, we find a neighborhood  $U_n$ of $\overline{\Gamma}_d$  and a {\it smooth} sequence $(u_n,E_n,P_n)\in \ashom(0,\Om')$  (and an associated smooth $\mu_n\in \xs(\Om')$) with $(u_n,E_n,P_n)\equiv 0$ on $U_n\times\ys$, $|P_n|( \Om'\times \Sigma)=0$,
$$
\left\{\begin{array}{ll}
u_{n}\weakst u&\qquad \text{weakly* in }BV(\Om')\\[2mm]
E_{n}\to  E&\qquad \text{strongly in }L^2(\Om'\times \ys;\R^N)\\[2mm]
P_{n}\weakst P&\qquad\text{weakly* in }\ms_b(\Om'\times \ys;\R^N)
\end{array}\right.
$$
 and such that
$$
\hshom(P_n) \to \hshom(P).
$$
Note that the associated  $\mu_n\in\xs(\Om')$ also vanishes identically on $U_n\times\ys$.
\par
Take a sequence $\ph_m\in C^\infty_c(\Om')$ with $\ph_m\nearrow \chi_{\Om'}$ and consider
$$
(u_n,\ph_mE_n,\ph_m P_n+(1-\ph_m)Du_n\otimes\leb^N_y)\in\ashom(0,\Om')
$$
 with associated $\varphi_m  \mu_n \in \xs(\Om')$.
Clearly, as $m\to+\infty$,
$$\left\{\begin{array}{ll}
\ph_mE_{n}\to  E_n&\qquad \text{strongly in }L^2(\Om'\times \ys;\R^N)\\[2mm]
\ph_m P_n+(1-\ph_m)Du_n\otimes\leb^N_y\to P_n&\qquad\text{strongly in }L^1(\Om'\times \ys;\R^N)
\end{array}\right.
$$
so, through yet another diagonalization process, we can assume  in addition that,  near $\pa\Om'\times\ys$, $E_n,\mu_n\equiv 0$ and that $P_n$ is   independent of $y$.
\par
 We now construct the recovery sequence for item (b) in the statement of the theorem.
Set
$$
u_{n,\e}(x)=u_n(x)-\e\mu_n\left(x,\xoe\right),
$$
so that
\begin{multline}
\label{eq:eq-kce}
Du_{n,\e}(x)=Du_n(x)-\e D_x\mu_n\left(x,\xoe\right)- D_y\mu_n\left(x,\xoe\right)\\
=E_n\left(x,\xoe\right)+P_n\left(x,\xoe\right)- \e D_x\mu_n\left(x,\xoe\right).
\end{multline}
We can define
$$
\begin{cases}
e_{n,\e}:=E_n(x,\xoe)-\e D_x\mu_n(x,\xoe)\\[1mm]
p_{n,\e}:=P_n(x,\xoe)
\end{cases}
$$
and  have thus constructed a triplet $(u_{n,\e}, e_{n,\e}, p_{n,\e})$. Further, in view of \eqref{eq:eq-kce} and since $\mu_n\equiv0$ near $\overline{\Gamma}_d$, that triplet belongs to $\as(0,\Om')$.  
\par
Since $e_{n\,e}\equiv 0$ and $p_{n,\e}$ does not depend on $x/\e$ near $\partial \Om'$,   a Riemann-Lebesgue type Lemma (see e.g. \cite[Lemma 5.5 and Lemma 5.6]{allaire}) immediately implies that,
$$
\left\{\begin{array}{rcll}
u_{n,\e}&\weakst& u_n,&\text{weakly* in } BV(\Om')\\[2mm]
e_{n,\e}&\weakdue& E_n, &\text{ two scale  weakly in }L^2(\Om'\times\ys;\R^N)\\[2mm]
p_{n,\e}&\weakduest& P_n, &\text{ two scale   weakly* in }\ms_b(\Om'\times\ys;\R^N)\\[2mm]
\end{array}\right.
$$
as $\e\searrow 0$ with
$$
\qs_\e(e_{n,\e})\to\qshom(E_n)\qquad\text{and}\qquad
\hs_\e(p_{n,\e})\to\hshom(P_n).
$$
A standard diagonalization process produces a triplet $(u_{\e},e_{\e},p_{\e})\in\as(0,\Om')$  which is the desired recovery sequence for $(u,E,P) \in \ashom(0,\Om')$.
\par
If $w\in H^1(\R^N)\ne 0$,  it is enough  to approximate $(u-w,E-\nabla w,P)\in \ashom(0,\Om')$ and to translate the approximating  triplet by $(w,\nabla w,0)$. The proof is complete.
\end{proof}

\subsection{ The homogenization result}
\label{sub:homHencky}

In this section we characterize  the asymptotic behaviour of  the elasto-plastic functionals $\es_\e$ (see \eqref{eq:totEeps}) in terms of  the two-scale homogenized energy $\eshom$ given in \eqref{eq:ehom}.
\par
For every $(u,e,p)\in \as(w,\Om')$  consider
\begin{multline}
\label{eq:Ehom}
\es^{hom}(u,e,p):=\min\{\eshom(u,E,P):  (u,E,P)\in  \ashom(w,\Om') \\
\text{ and \eqref{eq:Ee} and \eqref{eq:Pp} are satisfied}\},
\end{multline}
where
\begin{equation}
\label{eq:Ee}
e(x)=\int_\ys E(x,y)\,dy\qquad\text{ for $\leb^N$-a.e. $x\in \Om'$},
\end{equation}
and
\begin{equation}
\label{eq:Pp}
p(B)=P(B\times \ys)\qquad \text{for every Borel set $B\subseteq \Om'$}.
\end{equation}
The minimum problem in \eqref{eq:Ehom} is well posed.  Indeed, $\eshom$ is coercive and lower semicontinuous in $(E,P)$ and $\ashom(w,\Om')$ is closed under weak/weak* convergences of the strains (see Section \ref{sub:Henky2s}), while relations \eqref{eq:Ee} and \eqref{eq:Pp} are stable, taking $\varphi(x)\in C^0_c(\Om')$  as a test for weak/weak* convergence.
\par
 We extend $\es_\e$ and $\es^{hom}$ to $BV(\Om')\times L^2(\Om';\R^N)\times \ms_b(\Om';\R^N)$ by setting $\es_\e=+\infty$ and $\es^{hom}=+\infty$ outside $\as(w,\Om')$ and note that  the $\es_\e$ are equicoercive since \eqref{eq:Kin} holds true for $\es_\e$  for some $C>0$ independent of $\e$. 
The following result holds.

\begin{theorem}[\bf The Homogenization Result]
\label{thm:hom-res}
Assume that \eqref{eq:reg-gd} is satisfied. Then $(\es_\e)$ $\Gamma$-converges to $\es^{hom}$ given in \eqref{eq:Ehom} $\e\to 0^+$ with respect to the product of the weak*-$BV(\Om')$, weak-$L^2(\Om';\R^N)$ and weak*-$\ms_b(\Om';\R^N)$ topologies.
\end{theorem}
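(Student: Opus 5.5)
The proof reduces to Theorem \ref{thm:2sc-gl} together with the compactness statement Proposition \ref{lem:comp1}, which lets us move between ordinary weak limits and two-scale limits. The equicoercivity estimate \eqref{eq:Kin}, uniform in $\e$, allows sequential characterizations of the $\Gamma$-limit on bounded sets. Both inequalities are checked along arbitrary subsequences $\e_j\to 0$, so it suffices to prove the two usual inequalities.

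\textbf{$\Gamma$-liminf.} Let $(u_\e,e_\e,p_\e)\to(u,e,p)$ in the product topology, and assume without loss of generality that $\liminf_\e \es_\e(u_\e,e_\e,p_\e)<+\infty$. Pass to a subsequence realizing the liminf along which the energies stay bounded. By \eqref{eq:Kin} the triplets are bounded, so Proposition \ref{lem:comp1} provides a further subsequence and $(u,E,P)\in\ashom(w,\Om')$ with $(u_\e,e_\e,p_\e)\weakdue(u,E,P)$. The first component is the same $u$ by uniqueness of weak* limits in $BV(\Om')$.

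Remark \ref{rem:marginal}, applied to $e_\e\weak e$ and to $p_\e\weakst p$, gives \eqref{eq:Ee} and \eqref{eq:Pp}. Hence $(E,P)$ is a competitor in \eqref{eq:Ehom}. Theorem \ref{thm:2sc-gl}(a) then yields
\[
\es^{hom}(u,e,p)\le\eshom(u,E,P)\le\liminf_{\e}\es_\e(u_\e,e_\e,p_\e).
\]
In addition, $(u,e,p)\in\as(w,\Om')$: indeed $Du=e+p$ follows by averaging \eqref{eq:adm1} over $\ys$, since $D_y\mu(B\times\ys)=0$, and the boundary conditions follow from \eqref{eq:wOm'bis}. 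So the extension of $\es^{hom}$ by $+\infty$ causes no issue.

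\textbf{$\Gamma$-limsup.} Fix $(u,e,p)$ with $\es^{hom}(u,e,p)<\infty$. Let $(E,P)$ attain the minimum in \eqref{eq:Ehom}; it exists by the well-posedness discussed after \eqref{eq:Ehom}. Theorem \ref{thm:2sc-gl}(b), which uses \eqref{eq:reg-gd}, gives $(u_\e,e_\e,p_\e)\in\as(w,\Om')$ with $(u_\e,e_\e,p_\e)\weakdue(u,E,P)$ and $\es_\e(u_\e,e_\e,p_\e)\to\eshom(u,E,P)=\es^{hom}(u,e,p)$.

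It remains to check that two-scale convergence implies plain weak convergence to the averages. The sequences $e_\e$ and $p_\e$ are bounded, so every weak (respectively weak*) cluster point is determined through Remark \ref{rem:marginal} by \eqref{eq:Ee} and \eqref{eq:Pp}. It therefore equals $e$ (respectively $p$), and the whole family converges. For $p_\e$ one must make sure that no mass escapes to $\partial\Om'$. This holds because, in the construction of Theorem \ref{thm:2sc-gl}, $p_\e=0$ outside $\Omb$ and $\Omb\subset\Om'$ is compact, so testing with functions in $C^0_0(\Om')$ suffices.

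\textbf{Main obstacle.} The delicate point is identifying ordinary weak* limits from two-scale weak* limits of measures, since Definition \ref{def:t-s-meas} only uses test functions vanishing at the boundary. Uniform boundedness together with the support property (everything vanishes on $\Om'\setminus\Omb$) resolves this. All the substantive work lies in Theorem \ref{thm:2sc-gl}(b), which we take as given.
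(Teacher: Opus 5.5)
Your proof is correct and follows the paper's argument: compactness (Proposition \ref{lem:comp1}) combined with Remark \ref{rem:marginal} and two-scale lower semicontinuity gives the liminf inequality, and a minimizer of \eqref{eq:Ehom} fed into Theorem \ref{thm:2sc-gl}(b) gives the recovery sequence. Your extra checks are details the paper leaves implicit: that the limit triplet lies in $\as(w,\Om')$, and that the two-scale recovery sequence converges in the product topology. The second check needs no support argument, because weak* convergence in $\ms_b(\Om';\R^N)$ is itself tested against $C^0_0(\Om')$, and any $\varphi(x)\in C^0_0(\Om')$ is already an admissible two-scale test function.
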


\begin{proof}
The proof is a simple consequence of the results of the Subsection \ref{subsec:2s}. 
\par
Assume $(u_\e,e_\e,p_\e) \weak (u,e,p)$. By Proposition \ref{lem:comp1}, and recalling notation \eqref{eq:def-weak2-bis},  there exists $(u,E,P)\in \ashom(w,\Om')$ such that, up to a subsequence,
$$
(u_\e,e_\e,p_\e)\weakdue (u,E,P).
$$
In particular relations \eqref{eq:Ee} and \eqref{eq:Pp} are satisfied thanks to the general properties of two-scale convergence (see Remark \ref{rem:marginal}). Moreover, in view of the lower semicontinuity in Theorem \ref{prop:lsc-hs} and of the very definition of $\es^{hom}$ we may write
$$
\liminf_{\e\to 0^+} \es_\e(u_\e, e_\e, p_\e) \ge \eshom(u,E,P)\ge \es^{hom}(u,e,p).
$$
\medskip

 Given $(u,e,p)\in \as(w,\Om')$, let $(u,E,P)\in \ashom(w,\Om')$ satisfying \eqref{eq:Ee} and \eqref{eq:Pp} and such that
$$
\es^{hom}(u,e,p)=\eshom(u,E,P)
$$
according to \eqref{eq:Ehom}. Thanks to item (b) in Theorem \ref{thm:2sc-gl}, there exists a sequence $(u_{\e},e_{\e},p_{\e})\in \as(w,\Om')$ with $(u_{\e},e_{\e},p_{\e}) \weakdue (u,E,P)$ and such that
$$
\eshom(u,E,P)=\lim_{\e\to 0^+} \es_{\e}(u_{\e},e_{\e},p_{\e}),
$$
which is the desired recovery sequence.
\end{proof}

\section{Is the homogenized energy elasto-plastic?}\label{sec:notplastic}

In this last section, we investigate  to which extent the homogenized functional $\es^{hom}(u,e,p)$ defined in \eqref{eq:Ehom} is still an elasto-plastic energy  of the form \eqref{eq:estot} involving an elastic energy associated to the elastic strain $e$ and a dissipation potential associated to the plastic strain $p$. This is far from  obvious since the minimum problem in the two-scale setting defining $\es^{hom}$ is nonlinear.
\par
To tackle this issue, it proves convenient to resort to a  one-field description of $\es_\e$ and of $\es^{hom}$; see \cite{dalmaso.toader} for an attempt at  a similar description for evolution in a model combining plasticity and fracture.  

\subsection{The one-field viewpoint for heterogeneous plasticity}\label{sub:of-het}
 \label{subsec:one-filed}
Returning to the Hencky two-phase setting of Section \ref{sec:Hencky-het} we
set, for every $u\in BV(\Om')$ with $u=w$ on $\Om'\setminus \Omb$,
$$
\fs(u):=\min_{(e,p)}\{\es(u,e,p)\,:\, (u,e,p)\in \as(w,\Om')\}\\
=\min_{(e,p)}\{\qs(e)+\hs(p)\,:\, (u,e,p)\in \as(w,\Om')\}.
$$
The minimum problem is well posed and admits a unique solution, since the elastic energy is quadratic in $e$ (hence strictly convex) and in view of the compatibility condition $Du=e+p$.
\par
We can provide an integral representation for $\fs$ in the following way.  For any $\xi\in \R^N$ and $i\in \{1,2\}$  set
\bel{eq:eq-infs}
W_i(\xi):=\min_{p\in\R^N} \left\{\frac{1}{2}\C_i (\xi-p)\cdot (\xi-p)+ H_i(p)\right\}= \max_{\sigma\in K_i}\left\{\xi\cdot\sigma-\frac{1}{2}\C_i^{-1}\sigma\cdot\sigma\right\}
\ee
where the second equality can easily be checked through classical arguments of convex analysis. 
Consider the Caratheodory function $W:(\Om  \cup \Gamma_d)\times\R^N\to \R$ defined as
\bel{eq:defWx}
W(x,\xi):=
\begin{cases}
W_1(\xi) &\text{ if $x\in\Om_1\cup \Sigma \cup (\pa\Om_1\cap \Gamma_d)$}\\[2mm]
W_2(\xi) &\text{ otherwise }.
\end{cases}
\ee
The recession function $W^\infty(x,\xi)$ of $W(x,\xi)$, that is
$$
W^\infty(x,\xi):= \limsup_{t\to+\infty}\frac{W(x,t\xi)}t,
$$
 satisfies
\bel{eq:H=Winf}
W^\infty(x,\xi)=H(x,\xi).
\ee
 In the spirit of \cite[Lemma 4.1]{dalmaso.toader}, if $u\in BV(\Om')$, 
$$
W(x,\nabla u(x))=\frac{1}{2}\C(x) (\nabla u(x)-\overline p(x)\cdot (\nabla u(x)-{\overline p}(x))+ H(x,{\overline p}(x)), \mbox{ a.e. in }\Om,
$$
where $\overline p$ is measurable since the minimizer $p$ in \eqref{eq:eq-infs} is unique and depends continuously on $\xi$ and integrable   in view of \eqref{eq:eq-infs}. Setting $\bar p=0$ on $\Om'\setminus \Omb$, defining 
$$
e_u:=\nabla u-{\overline p}\qquad\text{and}\qquad p_u={\overline p}\leb^N+Du^s
$$ 
and recalling \eqref{eq:H=Winf} we recover a triplet $(u,e_u,p_u)\in \as(w,\Om')$ such that 
\begin{multline*}
\fs(u)=\es(u,e_u,p_u)=\qs(e_u)+\hs(p_u)
\\=\int_\Om W(x,\nabla u)\,dx+\int_{\Om}  H \left( x,\frac{D^su}{|D^su|}\right)\,d|D^su|\\
+\int_{\Gamma_d} H(x, (w-u)\nu)\,d\hn+\frac12\int_{\Om'\setminus\Om}\C \nabla w\cdot \nabla w dx.
\end{multline*}

The equilibrium problem \eqref{eq:minpb} can thus be rephrased as a minimum problem in the displacement $u$ as
$$
\min\{\fs(u)\,:\, u\in BV(\Om'), u=w \text{ on }\Om'\setminus \Omb\}.
$$

We will make use of the following lemma.

\begin{lemma}
\label{rem:fs-rel}
The functional $\fs(u)$ is the lower semi-continuous envelope  with respect to  the strong $L^1$-topology of
$$
 u\mapsto 
 \begin{cases}
 \displaystyle\int_\Om W(x, \nabla u)\; dx+\frac12\int_{\Om'\setminus\Om}\C \nabla w\cdot \nabla w dx,&\text{if $u\in W^{1,1}(\Om')$ with $u=w$ on $\Om'\setminus \Omb$,}\\[2mm]
 +\infty,&{\rm otherwise.}
 \end{cases}
$$
\end{lemma}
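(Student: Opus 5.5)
The plan is to prove the two inequalities that characterize a relaxed functional. Denote by $\mathcal G$ the functional on the right-hand side of the statement, and by $\overline{\mathcal G}$ its lower semicontinuous envelope for the strong $L^1(\Om')$-topology. I will show that $\fs\le\mathcal G$ and that $\fs$ is $L^1$-lower semicontinuous, which together give $\fs\le\overline{\mathcal G}$. I will then show that every $u$ with $\fs(u)<+\infty$ admits $u_n\in W^{1,1}(\Om')$, with $u_n=w$ on $\Om'\setminus\Omb$, such that $u_n\to u$ in $L^1$ and $\limsup_n\mathcal G(u_n)\le\fs(u)$. This gives $\overline{\mathcal G}\le\fs$.

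\emph{First inequality.} Let $u\in W^{1,1}(\Om')$ with $u=w$ on $\Om'\setminus\Omb$. Take $\overline p(x)$ to be the pointwise minimizer in \eqref{eq:eq-infs} with $\xi=\nabla u(x)$, and set $\overline p=0$ off $\Om$. This $\overline p$ is measurable and integrable, and $(u,\nabla u-\overline p,\overline p\,\leb^N)\in\as(w,\Om')$ with no $\Gamma_d$ contribution, since $u$ has no jump there. Its energy is exactly $\mathcal G(u)$, so $\fs(u)\le\mathcal G(u)$ by the definition of $\fs$.

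\emph{Lower semicontinuity of $\fs$.} Let $u_n\to u$ in $L^1$ with $\liminf_n\fs(u_n)<+\infty$, and pass to a subsequence realizing the liminf. Take the optimal pairs $(e_n,p_n)$. By \eqref{eq:Kin} the triplets $(u_n,e_n,p_n)$ are bounded in $BV\times L^2\times\ms_b$, so up to a subsequence $u_n\weakst u$, $e_n\weak e$ and $p_n\weakst p$. The relations \eqref{eq:wOm'} and \eqref{eq:comp-Om'} pass to the limit, hence $(u,e,p)\in\as(w,\Om')$. The functional $\qs$ is weakly lower semicontinuous. The functional $\hs$ is weak* lower semicontinuous by Reshetnyak's theorem, which applies because $H$ is lower semicontinuous thanks to \eqref{eq:ordK}. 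Therefore $\fs(u)\le\es(u,e,p)\le\liminf_n\fs(u_n)$.

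\emph{Recovery sequence.} Fix $u$ with $\fs(u)<+\infty$ and let $(u,e_u,p_u)$ be the optimal triplet. By Remark \ref{rem:relax}, $\es$ is the relaxation of $\es_{reg}$. Hence there exist $(u_n,e_n,p_n)\in\as_{reg}(w,\Om')$ with $u_n\to u$ in $L^1$ and $\es_{reg}(u_n,e_n,p_n)\to\es(u,e_u,p_u)=\fs(u)$. Since $\nabla u_n=e_n+p_n$ a.e., the definition \eqref{eq:eq-infs} of $W$ as an inf-convolution gives, pointwise a.e. in $\Om$,
\[
W(x,\nabla u_n)\le\tfrac12\C(x)e_n\cdot e_n+H(x,p_n).
\]
On $\Om'\setminus\Omb$ we have $e_n=\nabla w$. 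Integrating, $\mathcal G(u_n)\le\es_{reg}(u_n,e_n,p_n)$, and therefore $\limsup_n\mathcal G(u_n)\le\fs(u)$.

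\emph{Main obstacle.} The difficulty is that Remark \ref{rem:relax} is only sketched. If a self-contained argument is required, I would construct the approximations of $(u,e_u,p_u)$ directly, mimicking Lemmas \ref{lem:trans1}--\ref{lem:trans3} with $x$-translations in place of $y$-translations, and proceed in three steps.
\begin{enumerate}
\item Near the interface $\Sigma$, use a partition of unity and translate the localized pieces slightly into $\Om_1$, using that $\Om_1$ is Lipschitz. This turns the part of $p$ carried by $\Sigma$, where $H=H_1$, into a measure living in $\Om_1$, where the density is again $H_1$.
\item Near $\overline\Gamma_d$, translate inward as in Lemma \ref{lem:trans2}. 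The boundary jump $(w-u)\nu\,\hn\lfloor\Gamma_d$ then becomes an interior measure. Here \eqref{eq:reg-gd}, or an analogous argument, controls the extra mass created on $\pa\Om\setminus\Gamma_d$.
\item Regularize by convolution, as in Lemma \ref{lem:trans3}, to obtain absolutely continuous $p_n$ and $u_n\in W^{1,1}(\Om')$.
\end{enumerate}
In each step one checks the strict convergence $|p_n|(\Om')\to|p|(\Om')$ phase by phase, so that Reshetnyak's continuity theorem applies on $\Om_1$ and $\Om_2$ separately. The interface step is the delicate one, since only there does the lower semicontinuity of $H$, rather than its continuity, come into play.
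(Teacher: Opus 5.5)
Your inequality $\fs\le\mathcal G$ and your proof that $\fs$ is $L^1$-lower semicontinuous are fine. The second one, via three-field compactness and Reshetnyak's theorem, is a legitimate substitute for the paper's liminf, which instead localizes the liminf inequality of the relaxation theorem in \cite{ADF}. The gap is the recovery sequence, which is the only nontrivial half of the lemma. You take it from Remark \ref{rem:relax}, but that remark is not a proved result. The paper only says that a localization/translation/regularization procedure ``would demonstrate'' it, and adds that, to the authors' knowledge, such a relaxation has not been carried out in a heterogeneous setting. It then proves the present lemma without using the remark. Your pointwise inequality $W(x,e+p)\le\frac12\C(x)e\cdot e+H(x,p)$ shows that the three-field relaxation implies the one-field one. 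So you have reduced the lemma to a stronger statement that is not established.

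Your fallback sketch does not close this gap. Lemmas \ref{lem:trans1}--\ref{lem:trans3} work because the interface lives in $y$ and the Dirichlet boundary in $x$, so the two translations never interact. In the one-scale problem both live in $x$. Near points of $\overline{\Sigma}\cap\overline{\Gamma}_d$ you need a single $\tau$ that does two things at once:
\begin{itemize}
\item push $\Omb$ into $\Om$, to move the jump on $\Gamma_d$ inside while keeping $u=w$ on $\Om'\setminus\Omb$;
\item push $\overline{\Om}_1$ into $\Om_1$, so that mass charged with $H_1$ (on $\Sigma$ or on $\pa\Om_1\cap\Gamma_d$) does not land in $\Om_2$, where it costs $H_2$.
\end{itemize}
For Lipschitz phases such a $\tau$ need not exist. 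In the plane, near $0$, let $\Om=\{x_2>g(x_1)\}$ with $g=0$ for $x_1<0$ and $g(x_1)=-Lx_1$ for $x_1>0$. Let $\Om_1=\{x_1<0,\ 0<x_2<m|x_1|\}$ with $m<L$; then $\Om$, $\Om_1$ and $\Om_2$ are all Lipschitz. Pushing $\overline{\Om}_1$ into $\Om_1$ forces $\tau_1<0$ and $0<\tau_2<m|\tau_1|$. Pushing $\Omb$ into $\Om$ forces $\tau_2>-L\tau_1=L|\tau_1|$, which is incompatible. This can presumably be repaired by accepting a small error near the triple junctions, but your sketch does not address it. Its step (2) also invokes \eqref{eq:reg-gd}, which is not a hypothesis of the lemma.

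The missing tool is a relaxation theorem in $BV$ for integrands that are only lower semicontinuous in $x$; the paper takes it from \cite{ADF}. The paper's argument runs as follows:
\begin{enumerate}
\item It extends the integrand by $k|\xi|$ outside $\Omb$ on some $A$ with $\Om'\subset\subset A$; for large $k$ this fits the hypotheses of \cite{ADF}.
\item It relaxes there an extension $\tilde u$ whose outer trace is $w$ on $\Gamma_d$ and $u$ on $\pa\Om\setminus\Gamma_d$.
\item It lets $k\to\infty$ to get strict convergence outside $\Omb$, hence convergence of the traces on $\pa\Om$.
\item It restores the exact boundary value with a correction $G_k\to0$ in $W^{1,1}(\Om)$, which is harmless because $W(x,\cdot)$ is Lipschitz.
\end{enumerate}
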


\begin{proof}
The proof follows by application of the relaxation result \cite[Theorem 1.1]{ADF}  upon taking into account the boundary value $w$. 

 Let $k\in\N$, and, for any Borel set $B\subset\R^N$, let $W_{k,B}: B\times \R^N\to \R$ be given by
$$
W_{k,B}(x,\xi):=
\begin{cases}
W_1(\xi)& \text{if }x \in B\cap\Omb_1\\
W_2(\xi)& \text{if }x \in B\cap\Omb\setminus \Omb_1\\
k|\xi| &\text{if }x\in B\setminus \Omb.
\end{cases}
$$
Consider an open set $A$  such that $\Om'\subset\subset A$. 
Note that $W_{k,A}=W$ on $(\Om\cup\Gamma_d)\times \R^N$, and that $W_{k,A}$ satisfies relation (1.6)  in \cite[Theorem 1.1]{ADF} for $k$ larger than $c_2$ defined in \eqref{eq:Kinc} (since $W_1\le W_2$). By \cite{ADF} we conclude that 
$$
\fs_{k,A}(v):=
\begin{cases}
\int_A W_{k,A}(x,\nabla v)\,dx+\int_{A}  W_{k,A}^{\infty} \left( x,\frac{D^sv}{|D^sv|}\right)\,d|D^sv|&u\in BV(A)\\
+\infty &v\in L^1(A)\setminus BV(A)
\end{cases}
$$
is the relaxed functional with respect to the $L^1$-topology of
\begin{equation*}
v\mapsto 
\begin{cases}
\int_A W_{k,A}(x,\nabla v)\,dx &v\in W^{1,1}(A)\\
+\infty&v\in L^1(A)\setminus W^{1,1}(A).
\end{cases}
\end{equation*}

With this setup the liminf inequality for $\fs$ is obvious since  it suffices to localize the liminf inequality for  $\fs_{k,A}$ to $\Om'$ because the contribution on $\Om'\setminus \Om$ is fixed and given by $k\int_{\Om'\setminus \Om}|\nabla w|\,dx$.

The limsup inequality for $\fs$ goes as follows.  Let $u\in BV(\Om')$ with $u=w$ on $\Om'\setminus \Omb$ and, thanks to  Gagliardo's trace theorem, let $\tilde u\in BV(A)$ be an extension of $u$ to $A$, which belongs to $W^{1,1}$ on $A\setminus \Omb$ and whose trace (from outside) on $\partial \Om$ is given by $\tilde w:=w1_{\Gamma_d}+u 1_{\partial\Om\setminus \Gamma_d}$.  By the relaxation result, we can find $\tilde u_{k,n}\in W^{1,1}(A)$ with $\tilde u_{k,n}\stackrel{n}{\to} \tilde u$ strongly in $L^1(A)$ such that
$$
\lim_{n\to+\infty} \int_A W_{k,A}(x,\nabla \tilde u_{k,n})\,dx=\fs_{k,A}(\tilde u).
$$
Note that given $B\subseteq A$ open, the optimality of $\tilde u_{k,n}$ on $A$ implies (thanks to the liminf inequality on $A\setminus \bar B$)
$$
\limsup_{n\to+\infty} \int_B W_{k,A}(x,\nabla \tilde u_{k,n})\,dx \le \fs_{k,A\cap\bar B}(\tilde u),
$$
\par
Using this we get localizing on $A\setminus \Omb$ and dividing by $k$
$$
\limsup_{n\to+\infty} \int_{A\setminus \Omb} |\nabla \tilde u_{k,n}|\,dx\le \int_{A\setminus \Omb}|\nabla \tilde u|\,dx+\frac{1}{k}\int_{\Gamma_d} H(x, (w-u)\nu)\,d\hn,
$$
while, localizing on $\Om$,
\begin{multline*}
\limsup_{n\to+\infty}
\int_{\Om} W(x,\nabla \tilde u_{k,n})\,dx\\
\le \int_{\Om} W(x,\nabla u)\,dx+\int_{\Om} H \left( x,\frac{D^s u}{|D^s u|}\right)\, d|D^su|+\int_{\Gamma_d}H(x,(w-u)\nu)\,d\hn.
\end{multline*}
In both inequalities we used that by construction $D^s\tilde u\lfloor \partial\Om=(w-u)\nu\hn\lfloor \Gamma_d$ and $W_{k,A}^\infty=H$ on $\Om\cup\Gamma_d$.
A diagonal argument yields the existence of a sequence $n_k\to +\infty$ such that $\tilde u_{k,n_k}\to \tilde u$ strongly in $L^1(A)$, 
$ \int_{A\setminus\Omb}|\nabla \tilde u_{k,n_k}|dx\to\int_{A\setminus\Omb}|\nabla \tilde u|dx$ and
\begin{multline*}
\limsup_{k\to+\infty} \int_{\Om} W(x,\nabla \tilde u_{k,n_k})\,dx
\\ 
\le \int_{\Om} W(x,\nabla u)\,dx+\int_{\Om} H \left( x,\frac{D^s u}{|D^s u|}\right)\, d|D^su|+\int_{\Gamma_d}H(x,(w-u)\nu)\,d\hn.
\end{multline*}
In particular $\tilde u_{k,n_k}\to \tilde w$ strongly in $L^1(\partial\Om)$. Let us consider $G_k\in W^{1,1}(\Om)$ such that $G_k\to 0$ strongly in $W^{1,1}(\Om)$ and its trace on $\partial\Om$ is given by $\tilde w-\tilde u_{k,n_k}$. 
\par
The recovering sequence for $u$   relative to $\fs$ that   we are seeking is then given by $u_k\in W^{1,1}(\Om')$ such that $u_k=\tilde u_{k,n_k}+\tilde G_k$ on $\Om$ and $u_k=w$ on $\Om'\setminus \Omb$.
\end{proof}

\subsection{Periodic heterogeneous plasticity: the  homogenized one-field functional}
\label{sub:of-het-per}
Consider now the periodic setting of Subsection \ref{sub:unit-cell}. Introduce  $W:\ys\times \R^N\to [0,+\infty[$ given by
$$
W(y,\xi):=
\begin{cases}
W_i(\xi)&\text{if }y\in\ys_i\\[2mm]
W_1(\xi)&\text{if }y\in  \Sigma,
\end{cases}
$$
with $W_i$ defined in \eqref{eq:eq-infs}, and the associated
$\displaystyle
W_\e(x,\xi):= W\left(\frac x\e,\xi\right).
$
Recalling \eqref{eq:defQe} and  \eqref{eq:defHe} the one-field functional 
\begin{equation}
\label{eq:oneFe}
\fs_\e(u)=\min_{(e,p)} \left\{\es_\e(u,e,p)\,:\, (u,e,p)\in \as(w,\Om')\right\}
\end{equation}
takes the form
\begin{multline*}
\fs_\e(u)=\int_\Om W_\e(x,\nabla u)\,dx+ \hs_\e(D^su)+ \frac{1}{2}\int_{\Om'\setminus \Om}\C(x/\e) \nabla w\cdot \nabla w\,dx
\\[1mm]=\int_\Om W_\e(x,\nabla u)\,dx+\int_{\Om} H\left( \frac x\e,\frac{D^su}{|D^su|}\right)\,d|D^su|\\
+\int_{\Gamma_d}H\left(\frac x\e, (w-u)\nu\right)\,d\hn+\frac{1}{2} \int_{\Om'\setminus \Om}\C(x/\e) \nabla w\cdot \nabla w\,dx.
\end{multline*}
Let us extend it to $L^1(\Om')$ by setting $\fs_\e(u)=+\infty$ for $u\in L^1(\Om')\setminus BV(\Om')$. 

The results of Section \ref{sec:twoscale} imply the following 

\begin{theorem}
\label{thm:FshomEshom}
The sequence $(\fs_\e)$ $\Gamma$-converges with respect to the strong topology of $L^1(\Om')$ to a functional $\fs^{hom}$ such that,   for $u\in BV(\Om')$ with $u=w$ on $\Om'\setminus \Omb$,
$$
\fs^{hom}(u)=\min_{(e,p)} \left\{\es^{hom}(u,e,p)\,:\, (u,e,p)\in \as(w,\Om')\right\},
$$
where $\es^{hom}$ is  defined in \eqref{eq:Ehom}.
\end{theorem}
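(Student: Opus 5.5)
We define $\fs^{hom}(u)$ by the right-hand side, set $\fs^{hom}=+\infty$ elsewhere in $L^1(\Om')$, and prove the two $\Gamma$-convergence inequalities for this candidate. Both follow from Theorem \ref{thm:hom-res} and the equicoercivity estimate \eqref{eq:Kin}, after passing between the one-field and three-field descriptions through \eqref{eq:oneFe}. The minimum in the definition of $\fs^{hom}$ is attained because $\es^{hom}$ is itself a $\Gamma$-limit, hence lower semicontinuous in the product weak topology. It is also coercive by \eqref{eq:Kin}, since $\es^{hom}\ge$ the lsc limit of equicoercive functionals. Finally, $\as(w,\Om')$ is closed under these convergences.

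\emph{Liminf inequality.} Take $u_\e\to u$ in $L^1(\Om')$ with $\liminf_\e\fs_\e(u_\e)<+\infty$, and pass to a subsequence realizing the liminf. By \eqref{eq:oneFe} we pick $(e_\e,p_\e)$ with $(u_\e,e_\e,p_\e)\in\as(w,\Om')$ and $\fs_\e(u_\e)=\es_\e(u_\e,e_\e,p_\e)$. Then \eqref{eq:Kin}, valid uniformly in $\e$, bounds $u_\e$ in $BV(\Om')$, $e_\e$ in $L^2$ and $p_\e$ in $\ms_b$. Extracting a further subsequence gives $u_\e\weakst u$ in $BV$, $e_\e\weak e$ and $p_\e\weakst p$; the $BV$ limit coincides with the $L^1$ limit. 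Admissibility passes to the limit, so $(u,e,p)\in\as(w,\Om')$, and in particular $u=w$ on $\Om'\setminus\Omb$. The $\Gamma$-liminf part of Theorem \ref{thm:hom-res} then gives
$$\liminf_\e\fs_\e(u_\e)\ge\es^{hom}(u,e,p)\ge\fs^{hom}(u).$$
If instead $u\notin BV$, or $u\ne w$ outside $\Omb$, the liminf must be $+\infty$, since otherwise the argument just given would produce a contradiction.

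\emph{Limsup inequality.} Take $u\in BV(\Om')$ with $u=w$ on $\Om'\setminus\Omb$, and choose a minimizing pair $(e,p)$ in the definition of $\fs^{hom}(u)$. Theorem \ref{thm:hom-res}, through part (b) of Theorem \ref{thm:2sc-gl}, provides $(u_\e,e_\e,p_\e)\in\as(w,\Om')$ converging weakly to $(u,e,p)$ with $\es_\e(u_\e,e_\e,p_\e)\to\es^{hom}(u,e,p)$. Weak* convergence in $BV(\Om')$ implies $u_\e\to u$ strongly in $L^1$, because $\Om'$ is bounded and Rellich compactness applies. The convergence in $L^1$ is in any case part of the stated definition. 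Hence
$$\limsup_\e\fs_\e(u_\e)\le\lim_\e\es_\e(u_\e,e_\e,p_\e)=\fs^{hom}(u).$$
For all other $u$ there is nothing to prove.

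The main obstacle is making sure the topologies match. The $\Gamma$-convergence in Theorem \ref{thm:hom-res} is for product weak topologies, which are not metrizable on the whole space, whereas here we need $L^1$ convergence of $u$ alone, with $e$ and $p$ as hidden variables. The uniform estimate \eqref{eq:Kin} resolves this: on sublevel sets it turns $L^1$ convergence of $u_\e$ into weak compactness of the full triplet. This is the point to verify carefully, together with the sequential characterization of the $\Gamma$-limit on bounded sets, where the weak topologies are metrizable. Uniqueness of the limit along the whole family $\e\to0$ follows from the fact that the limit functional was identified independently of the subsequence.
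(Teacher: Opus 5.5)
Your proof is correct and follows essentially the paper's argument. For the lower bound you pick optimal $(e_\e,p_\e)$ for $\fs_\e(u_\e)$, use the uniform bound \eqref{eq:Kin} to extract weak limits, and apply the $\Gamma$-liminf of Theorem \ref{thm:hom-res}; for the upper bound you use the recovery sequence of Theorem \ref{thm:hom-res} together with $\fs_\e(u_\e)\le\es_\e(u_\e,e_\e,p_\e)$. The differences are only organizational: the paper first extracts a subsequential $\Gamma$-limit by compactness and then identifies it, and proves attainment of the minimum directly via two-scale compactness and lower semicontinuity of $\eshom$, whereas you verify both inequalities for the explicit candidate and obtain attainment from general properties of $\Gamma$-limits.
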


\begin{proof}
Note that the minimum in the statement is attained. Indeed, considering a minimizing sequence $(e_n,p_n)$, an associated $(E_n,P_n)$ according to \eqref{eq:Ehom} is bounded in $L^2(\Om'\times \ys;\R^N)\times \ms_b(\Om'\times \ys;\R^N)$, so that $(e_n,p_n)$ is bounded in $L^2(\Om';\R^N)\times \ms_b(\Om';\R^N)$: then compactness under weak/weak* convergence, the lower semicontinuity properties of $\eshom$ together with Remark \ref{rem:unique-mu} yield the conclusion.
\par
By general $\Gamma$-convergence results, we may assume that $\fs^{hom}$ exists up to subsequences. Let us prove that
\begin{equation}
\label{eq:ineq1}
\min_{(e,p)} \left\{\es^{hom}(u,e,p)\,:\, (u,e,p)\in \as(w,\Om')\right\}\le \fs^{hom}(u).
\end{equation}
We may assume $\fs^{hom}(u)<+\infty$. Let $u_\e\in BV(\Om')$ with $u_\e=w$ on $\Om'\setminus \Omb$ be such that $u_\e\to u$ strongly in $L^1(\Om')$ and $\fs_\e(u_\e)\to \fs^{hom}(u)$. Let us consider $(u_\e,e_\e,p_\e)\in \as(w,\Om')$ such that
$$
\fs_\e(u_\e)=\es_\e(u_\e,e_\e,p_\e)
$$
according to \eqref{eq:oneFe}. Up to a subsequence we have 
$$
e_\e\weak e\qquad\text{weakly in }L^2(\Om';\R^N)
\qquad\text{and}\qquad p_\e\weakst p\qquad\text{weakly* in }\ms_b(\Om';\R^N)
$$
with $(u,e,p)\in \as(w,\Om')$, and, thanks to Theorem \ref{thm:hom-res}, 
\begin{multline*}
\fs^{hom}(u)=\lim_{\e\to0^+} \fs_\e(u_\e)=\lim_{\e\to0^+} \es_\e(u_\e,e_\e,p_\e)\\
 \ge \es^{hom}(u,e,p)\ge \min_{(e,p)} \left\{\es^{hom}(u,e,p)\,:\, (u,e,p)\in \as(w,\Om')\right\},
\end{multline*}
so that \eqref{eq:ineq1} follows.
\par
 Let us now prove that
\begin{equation}
\label{eq:ineq2}
\fs^{hom}(u)\le \min_{(e,p)} \left\{\es^{hom}(u,e,p)\,:\, (u,e,p)\in \as(w,\Om')\right\}.
\end{equation}
If $(u,e,p)\in \as(w,\Om')$, in view of Theorem \ref{thm:hom-res} there exists $(u_\e,e_\e,p_\e)\in \as(w,\Om')$ such that
$(u_\e,e_\e,p_\e)\weak (u,e,p)$ and
$$
\es^{hom}(u,e,p)=\lim_{\e\to0^+} \es_\e(u_\e,e_\e,p_\e).
$$
We can thus write
$$
\es^{hom}(u,e,p)=\lim_{\e\to0^+} \es_\e(u_\e,e_\e,p_\e)\ge \liminf_{\e\to0^+} \fs_\e(u_\e)\ge \fs^{hom}(u)
$$
and \eqref{eq:ineq2} follows. 
\par
$\fs^{hom}$ is thus uniquely defined and  there is no need to pass to subsequences.
\end{proof}

\begin{remark}\label{rmk:FF} In view of Theorem \ref{thm:hom-res} an alternative characterization of $\fs^{hom}(u), u\in BV(\Om')$ with $u=w$ on $\Om'\setminus \Omb$, is
\bel{eq:charfhom}
\fs^{hom}(u)= \min_{(E,P)}\{\eshom(u,E,P): (u,E,P)\in \ashom(w,\Om')\}.
\ee
This result demonstrates that there is no gap, in the periodic setting, between the two-scale viewpoint and the one-scale viewpoint. 

A  result of similar ilk has been obtained in \cite[Theorem 1.1]{FF} for a functional defined on $BV$. That result does not possess  the kinematics  of \eqref{set:Aw2} which are specific to elasto-plasticity and operates under a continuity assumption which does not allow the consideration of a two-phase setting. Specifically, it is in particular assumed  there that
$$
y\mapsto \mathbb C(y) \mbox{ and } y\multimap K(y) \mbox{ are continuous (multi)-functions}
$$
(see e.g. \cite[(2.10), (2.11)]{francfort.giacomini} for the definition of a continuous multifunction) in lieu of \eqref{eq:Cpos2}, \eqref{eq:defHper}. Then, upon defining
$
\bs_{2s}^{hom}(w,\Om')
$
as $\as_{2s}^{hom}(w,\Om')$, but with the simpler kinematics $M=Du\otimes\leb_y^N+D_y\mu$ where $M\in  \ms_b(\Om'\times \ys;\R^N)$ and $\mu\in \ms_b(\Om'; BV(\ys))$,
$$
\fs^{hom}(u)= \min_{M}\left\{\int_{\Om'\times\ys}W\left(y, \frac{M^{ac}}{\leb^{2N}}\right)dxdy+ \int_{{\Om'\times\ys}}\!\!H\left(y,\frac{M^{s}}{|M^s|}\right)d|M^s|  : (u,M)\in \bs_{2s}^{hom}(w,\Om')\right\}.
$$
If adopting that standpoint there is no longer any remembrance of the elasto-plastic structure of the density $W(y,\xi)$ in contrast with \eqref{eq:charfhom}.
\hfill\P\end{remark}

According to an early work of Bouchitt\'e \cite[Theorem 4.1  and Remark 4.2]{bouchitte86} which, in the text, requires $C^1$-regularity for the domain but, in fact, also applies to  Lipschitz domains, 
the functionals $\fs_\e$ $\Gamma$-converge in the strong topology of  $L^1(\Om')$ to 
\begin{multline}
\label{eq:ofgl}
\fs^{hom}(u)=\int_{\Om} W^{hom}(\nabla u)\,dx+\int_{\Om'} (W^{hom})^\infty\left(\frac{D^su}{|D^su|}\right)\,d|D^su|+\frac{1}{2} \int_{\Om'\setminus \Om}\bar \C \nabla w\cdot \nabla w\,dx\\[1mm]
=\int_\Om W^{hom}(\nabla u)\,dx+\int_{\Om} (W^{hom})^\infty\left(\frac{D^su}{|D^su|}\right)\,d|D^su|\\
+\int_{\Gamma_d}(W^{hom})^\infty((w-u)\nu)\,d\hn+\frac{1}{2} \int_{\Om'\setminus \Om}\bar \C \nabla w\cdot \nabla w\,dx
\end{multline}
for $u\in BV(\Om')$ with $u=w$ on $\Om'\setminus \Omb$, and $\fs^{hom}=+\infty$ otherwise,  where $\bar \C=\int_\ys \C\,dy$. In \eqref{eq:ofgl} 
\begin{multline}\label{eq:whom}
W^{hom}(\xi)=\inf_\ph\left\{\int_\ys W(y,\xi+\nabla_y\ph)\ dy: \ph\in W^{1,1}(\ys)\right\}\\[1mm]
=\inf_\ph\left\{\int_Y W(y,\xi+\nabla_y\ph)\ dy: \ph\in W^{1,1}(Y) \mbox{ with } \ph \;Y-\text{periodic}\right\}.
\end{multline}
\par

Obviously, we have not used the integral representation \eqref{eq:ofgl}, \eqref{eq:whom} in our previous characterization \eqref{eq:charfhom} of $\fs^{hom}$. In the final subsection we propose to directly investigate  the cell formula \eqref{eq:whom} so as to adjudicate the elasto-plastic character of the homogenized functional $\fs^{hom}$. We do so with some trepidation, having  failed to make use of Theorem \ref{thm:hom-res}  
in such an endeavor.

\begin{remark}\label{rem:DQ} In the vectorial setting alluded to in the introduction the energy associated with the reduced one-field formulation    exhibits quadratic growth along hydrostatic matrices (multiples of the identity) while it only has linear growth along deviatoric matrices.  In that setting a result similar to that of \cite{bouchitte86} was obtained in \cite[Equation (3.1)]{DQ}. To the best of our knowledge that work did not give rise to further explorations of the elasto-plastic character of the accompanying energy density. 
\hfill\P\end{remark}

\subsection{The elasto-plastic character of the homogenized energy}\label{sub:cell-form}
We can relax the energy in \eqref{eq:whom} and rewrite the homogenized density as
 \bel{eq:cell-pbm1}
 W^{hom}(\xi)= \min\left\{\int_\ys W(y, \xi+\nabla_y\ph) dy+ \int_\ys  W^{\infty} \left(y,\frac{d D^s_y\ph}{d|D^s_y\ph|}\right)d|D^s_y\ph|:\ph\in BV(\ys)\right\}.
 \ee
where the infimum is now a minimum because the problem is coercive in $BV(\ys)$. Indeed, because  the torus has no boundary, the limsup inequality can be proved directly as in Section \ref{subsec:2s} employing  localization and translation near the interface $\Sigma$ then convolution in $y$. On each phase $W$ is constant in $y$ and  the standard property of convex functions (with linear growth) of a measure under convolution (see \cite[Theorem 4]{GS}) permits to conclude. The liminf inequality is local and is thus a consequence of \cite{ADF} as seen in the proof of Lemma \ref{rem:fs-rel}. 
Note that a direct proof which would completely bypass the results of \cite{ADF} could also be devised using localization and partitions of unity. We will not dwell any further on that point.
\par
 Now  recalling the definition of $W$ in \eqref{eq:eq-infs}-\eqref{eq:defWx} as well as  equality \eqref{eq:H=Winf} and defining 
 $$
\qs_y(e):=\frac{1}{2}\int_\ys \C(y)e(y)\cdot e(y) dy, \quad
\hs_y(p):=\int_\ys H\left(y,\frac{p}{|p|}(y)\right)d|p|
 $$
for $e\in L^2(\ys,\R^N)$ and $p\in \ms_b(\ys;\R^N)$,  the equivalence between the elasto-plastic and the one-field approaches expounded in Section \ref{subsec:one-filed} (and now performed  in $BV(\ys)$) lead to the following
 \begin{proposition}\label{eq:cell-pbm2} The energy ${W}^{hom}$ in \eqref{eq:cell-pbm1} also reads as
 \begin{multline*}
 {W}^{hom}(\xi)= \min\left\{\qs_y(e)+\hs_y(p): (\ph,e,p)\in BV(\ys)\times L^2(\ys,\R^N)\times\ms_b(\ys;\R^N),\right.\\[2mm]\left.\xi+D_y\ph=e+p\right\}.
 \end{multline*}
 \end{proposition}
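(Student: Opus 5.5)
The plan is to prove the two inequalities between the right-hand side of \eqref{eq:cell-pbm1} and the three-field minimum separately, following the one-field reduction of Subsection \ref{subsec:one-filed} transplanted to the torus $\ys$ (which has no boundary, so the Dirichlet terms disappear). Call $M(\xi)$ the minimum in the statement. First I check that $M(\xi)$ is attained. The constraint $\xi+D_y\ph=e+p$ is linear, and $\qs_y+\hs_y$ is coercive on $L^2\times\ms_b$. Since $D_y\ph=e+p-\xi$, the total variation of $\ph$ is controlled, and normalizing $\int_\ys\ph\,dy=0$ gives a $BV(\ys)$ bound. Lower semicontinuity follows from weak $L^2$ lower semicontinuity of $\qs_y$ and Reshetnyak's theorem for $\hs_y$, which applies because $H$ is lower semicontinuous on $\ys\times\R^N$ thanks to \eqref{eq:ordK}. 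The constraint passes to the limit distributionally.

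\emph{Inequality $M(\xi)\ge$ RHS of \eqref{eq:cell-pbm1}.} Take an admissible triple $(\ph,e,p)$ and split $p=p^a\leb^N+p^s$ with respect to $\leb^N_y$. Since $e\in L^2$, the singular parts give $D^s_y\ph=p^s$, and the absolutely continuous parts give $\xi+\nabla_y\ph=e+p^a$ a.e. Pointwise, by the definition \eqref{eq:eq-infs} of $W_i$ and of $W$ (with $W=W_1$, $H=H_1$ on $\Sigma$, a null set for $\leb^N$),
$$\tfrac12\C(y)e\cdot e+H(y,p^a)\ge W(y,\xi+\nabla_y\ph).$$
The singular part gives $\hs_y(p^s)=\int_\ys H(y,\frac{dD^s_y\ph}{d|D^s_y\ph|})\,d|D^s_y\ph|$, and by \eqref{eq:H=Winf} (which holds pointwise in $y$, including on $\Sigma$) this equals the $W^\infty$ term. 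Because $H(y,\cdot)$ is one-homogeneous and $p$ has mutually singular parts, $\hs_y(p)=\int H(y,p^a)dy+\hs_y(p^s)$. Summing yields the inequality.

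\emph{Inequality $M(\xi)\le$ RHS of \eqref{eq:cell-pbm1}.} Given $\ph\in BV(\ys)$, let $\bar p(y)$ be the unique minimizer in \eqref{eq:eq-infs} at $\xi+\nabla_y\ph(y)$ for phase $i$ containing $y$. Uniqueness comes from strict convexity in $e$, and $\bar p$ depends continuously on the argument, which makes it measurable. Set $e:=\xi+\nabla_y\ph-\bar p$ and $p:=\bar p\,\leb^N+D^s_y\ph$. Then the constraint holds, and the energies match term by term by the same computation run in reverse.

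The only step needing care is integrability: $e\in L^2$ and $\bar p\in L^1$. I would obtain this from the dual formula in \eqref{eq:eq-infs}. The optimal stress $\sigma=\C_i e$ lies in $K_i\subset B(0,c_2)$, so $e$ is bounded; then $\bar p=\xi+\nabla_y\ph-e\in L^1$. Everything else is routine, and the two inequalities give the proposition.
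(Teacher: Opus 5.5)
Your proposal is correct and follows the paper's route: the paper obtains this proposition by transplanting the one-field reduction of Subsection~\ref{subsec:one-filed} to $BV(\ys)$ (pointwise minimizer $\bar p$, $e=\nabla u-\bar p$, $p=\bar p\,\leb^N+D^s u$, together with $W^\infty=H$), which is exactly the two-inequality argument you spell out. Your $L^\infty$ bound on $e$ via $\C_i e\in K_i$ is a valid alternative to the integrability that follows directly from $\tfrac12\C_i e\cdot e+H_i(\bar p)=W_i(\xi+\nabla_y\ph)\le c_4|\xi+\nabla_y\ph|$.
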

 
 If $\es^{hom}(u,e,p)$ was an elasto-plastic functional, the density $W^{hom}$ of $\fs^{hom}$ should present an inf-convolution structure involving a quadratic elastic energy and a convex positively one homogeneous dissipation plastic potential. This seems to be suggested by Proposition \ref{eq:cell-pbm2} in a variational, rather than  pointwise sense. The analysis of the cell formula is  non trivial. 
 In the rest of this study  we will address the one dimensional case, for which we can exhibit a positive answer, and  the multi-dimensional case  for which we will only be in a position to exhibit  bounds for $W^{hom}(\xi)$ in terms of elasto-plastic energy densities.

\subsubsection{The 1d-setting}\label{subsub:1d}
Consider a two-phase setting where the characteristic function of phase 1 (say $[0,\theta], 0<\theta<1),$ is denoted by $\chi_1(y)$,   the elasticities are respectively $\alpha,\beta$, both positive, and the yield stresses are $\kappa,\eta$ with $\kappa\le\eta$ by which we mean
$K_1=[-\kappa,\kappa],\; K_2=[-\eta,\eta]$. According to  Proposition \ref{eq:cell-pbm2}, for any $\xi\in\R$,
\begin{multline}\label{eq:1dhe}
W^{hom}(\xi)=\min_{(\varphi,e,p)}\bigg\{\frac{1}{2}\int_{(0,1)}[\chi_1(y)\alpha+(1-\chi_1(y))\beta]\,|e(y)|^2\ dy \\[1mm]+\int_{]0,1]}[\chi_1(y)\kappa+(1-\chi_1(y))\eta]\,d|p|: \varphi\in  BV(0,1),\;\xi+ \varphi'=e+p,\\[1mm] p(1)= \varphi^+- \varphi^- (\mbox{where  $\varphi^\pm$ are the traces of $u$ at $x=1$ resp. $0$})  \bigg\}.
\end{multline}
 Note that we took care to define $\chi_1(y)$ to equal $1$ on $[0,\theta]$ and not only on $[0,\theta[$ to abide by condition \eqref{eq:defHper}. Also note that $p$ should be defined on $]0,1]$ and not on $[0,1]$ so as to reflect the underlying periodicity of the relevant fields.
\par
In the 1d case, this minimum can be explicitly computed thanks to the associated Euler-Lagrange equations. These can be derived by adapting line by line the arguments at the beginning of Section 3 in \cite{BF} (see in particular \cite[Equation (3.2) and Definition 2.1]{BF}) to the one dimensional periodic $BV$ setting (so without boundary): the discontinuous coefficients are no obstacle because only  the quadratic nature of the elastic energy and the subadditive and convex character of the plastic dissipation are used there. We obtain
\bel{eq:ELeq}
\begin{cases}
\sigma(y)=[\chi_1(y) \alpha+(1-\chi_1(y))\beta]\,e(y)\\[2mm]
\sigma(y)=\sigma_0\in\R\\[2mm]
|\sigma_0|\le \chi_1(y) \kappa+(1-\chi_1(y))\eta \quad \text{a.e. in }  [0,1]\\[2mm]
\sigma_0 p= [\chi_1(y) \kappa+(1-\chi_1(y))\eta]\,|p|.
\end{cases}
\ee
Since $|\sigma_0|\le \kappa$  we conclude  from the last equation in \eqref{eq:ELeq}  that, either $p\equiv 0$, or $\sigma_0=\pm\kappa$ and $p=\pm |p|$ with $\text{supp}|p|\subset ]0,\theta]$.
 
 From $\xi+\varphi'=e+p$ we get that, if $p\equiv 0$ which implies that $\varphi^+=\varphi^-$, 
 \bel{eq:trans-11}\xi=\int_{]0,1[}e(y) dy=\left(\frac\theta\alpha+\frac{(1-\theta)}\beta\right)\sigma_0.
 \ee
 Otherwise, 
 $$
 \xi+\varphi'=\pm \left(\frac{1}{\alpha}\chi_1(y)+\frac{1}{\beta}(1-\chi_1(y))\right)\kappa\pm|p|,
 $$
or still, integrating over $[0,1]$,
 \bel{eq:trans-12}\xi=\pm \left(\frac\theta\alpha+\frac{(1-\theta)}\beta\right)\kappa\pm\int_{]0,\theta]}d|p|.
 \ee
 From \eqref{eq:trans-11}, we have that
 $$
 |\xi|<\left(\frac\theta\alpha+\frac{(1-\theta)}\beta\right)\kappa\Rightarrow p=0\Rightarrow e(y)=\left(\frac{\chi_1(y)}\alpha+\frac{(1-\chi_1(y))}\beta\right)\frac\xi{\displaystyle\left(\frac\theta\alpha+\frac{(1-\theta)}\beta\right)},
 $$
 and then, for such $\xi$'s,
 \bel{eq:trans-13}
 W^{hom}(\xi)=\frac{1}{2}\frac{|\xi|^2}{\displaystyle\left(\frac\theta\alpha+\frac{(1-\theta)}\beta\right)}.
 \ee
 Meanwhile, from \eqref{eq:trans-12},
 $$
 |\xi|\ge\left(\frac\theta\alpha+\frac{(1-\theta)}\beta\right)\kappa\Rightarrow \sigma_0=\pm\kappa\Rightarrow e(y)=\pm\left(\frac{\chi_1(y)}\alpha+\frac{(1-\chi_1(y))}\beta\right)\kappa,
 $$
 and then, for such $\xi$'s,
 \bel{eq:trans-14}
 W^{hom}(\xi)=\frac{1}{2}{\displaystyle\left(\frac\theta\alpha+\frac{(1-\theta)}\beta\right)}{\kappa^2}+\kappa\left|\xi\mp\left(\frac\theta\alpha+\frac{(1-\theta)}\beta\right) \kappa\right|.
 \ee
 Collecting \eqref{eq:trans-13}, \eqref{eq:trans-14}, we obtain the following
 \begin{proposition}\label{prop:1dhhom}
 In a one-dimensional setting, the homogenized energy $W^{hom}$ of a bi-phase material for which the first phase has volume fraction $\theta$ is that  associated with a homogeneous Hencky elasto-plastic problem with, for elasticity, the harmonic mean $\left(\displaystyle\frac\theta\alpha+\frac{(1-\theta)}\beta\right)^{ -1}$ of the elasticities $\alpha,\beta$ of the two phases and, for ``yield stress" the minimum of the yield stresses $\kappa,\eta$ of the two phases.
 \end{proposition}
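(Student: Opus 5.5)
The plan is to show that the two formulas \eqref{eq:trans-13}--\eqref{eq:trans-14} coincide with the inf-convolution
$$
\overline W(\xi):=\min_{q\in\R}\Big\{\tfrac12\,\bar\alpha\,|\xi-q|^2+\kappa|q|\Big\},\qquad \bar\alpha:=\Big(\frac\theta\alpha+\frac{1-\theta}\beta\Big)^{-1},
$$
which is precisely the density of a homogeneous one-dimensional Hencky problem with elasticity $\bar\alpha$ and yield stress $\min\{\kappa,\eta\}=\kappa$.

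The first step is to make sure the computation preceding the statement is rigorous. Proposition \ref{eq:cell-pbm2} guarantees that the minimum in \eqref{eq:1dhe} is attained, and the Euler--Lagrange system \eqref{eq:ELeq} characterizes minimizers by convexity, so it is both necessary and sufficient. I would therefore exhibit an explicit triplet satisfying \eqref{eq:ELeq} for each $\xi$:
\begin{itemize}
\item for $|\xi|<\kappa/\bar\alpha$, take $p=0$, $\sigma_0=\bar\alpha\xi$, $e=\sigma_0/\alpha$ on phase 1 and $e=\sigma_0/\beta$ on phase 2, with $\varphi'=e-\xi$ (periodic, since $\int e=\xi$);
\item for $|\xi|\ge\kappa/\bar\alpha$, take $\sigma_0=\pm\kappa$ and $p=\pm(|\xi|-\kappa/\bar\alpha)\,\delta_{y_0}$ with $y_0\in]0,\theta]$, where $\delta_{y_0}$ is the Dirac mass at $y_0$; the sign and location are consistent with the last line of \eqref{eq:ELeq}, and $\varphi$ is obtained by integrating, its jump absorbing the Dirac mass.
\end{itemize}
Evaluating the energy of these triplets yields \eqref{eq:trans-13} and \eqref{eq:trans-14}. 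The only point needing care is that the Dirac mass sits where the dissipation equals $\kappa$; placing it at $y=\theta$ or inside $]0,\theta[$ is allowed because $\chi_1=1$ on $[0,\theta]$.

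The second step is the elementary evaluation of $\overline W$. Its optimality condition is $\bar\alpha(\xi-q)\in\kappa\,\partial|q|$. This gives $q=0$ when $|\xi|\le\kappa/\bar\alpha$, so $\overline W(\xi)=\frac12\bar\alpha\xi^2$, which matches \eqref{eq:trans-13} since $\bar\alpha=(\theta/\alpha+(1-\theta)/\beta)^{-1}$. Otherwise $q=\xi\mp\kappa/\bar\alpha$, and
$$
\overline W(\xi)=\frac{\kappa^2}{2\bar\alpha}+\kappa\Big|\xi\mp\frac{\kappa}{\bar\alpha}\Big|,
$$
which is exactly \eqref{eq:trans-14}. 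The two branches agree at $|\xi|=\kappa/\bar\alpha$, so continuity is automatic.

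Hence $W^{hom}=\overline W$, which is the claim. The main obstacle is the justification of \eqref{eq:ELeq} in the periodic $BV$ setting with discontinuous coefficients. If adapting \cite{BF} turns out to be delicate, I would bypass it with a direct lower bound. For any admissible $(\varphi,e,p)$, integrating the kinematic relation $\xi+\varphi'=e+p$ over the torus, with the periodic jump included in $p$, gives $\xi=\int e\,dy+p(]0,1])$. Setting $q:=p(]0,1])$, the dissipation is at least $\kappa|p|(]0,1])\ge\kappa|q|$ because $\kappa\le\eta$. For the elastic term, Jensen's inequality (or Cauchy--Schwarz with weights) applied to the constraint $\int e=\xi-q$ gives
$$
\frac12\int\big[\chi_1\alpha+(1-\chi_1)\beta\big]e^2\,dy\ge\frac12\,\bar\alpha\,(\xi-q)^2 .
$$
Together these give $W^{hom}\ge\overline W$. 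The explicit triplets of the first step provide the reverse inequality, so this route avoids \eqref{eq:ELeq} altogether; I would present it as the actual proof.
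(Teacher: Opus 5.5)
Your proof is correct, and the route you say you would actually present differs from the paper's in its key step.

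\textbf{The paper's route.} The paper treats the Euler--Lagrange system \eqref{eq:ELeq} as a \emph{necessary} condition for minimizers, justifying it by adapting \cite{BF} to the periodic $BV$ setting with discontinuous coefficients. From it the paper deduces that the stress is a constant $\sigma_0$. It then shows that either $p\equiv 0$, or $\sigma_0=\pm\kappa$ with $|p|$ carried by $]0,\theta]$. Finally it reads off \eqref{eq:trans-13}--\eqref{eq:trans-14}. The identification of these formulas with a Hencky inf-convolution is left implicit in the word ``collecting''.

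\textbf{Your route.} You prove $W^{hom}\ge\overline W$ directly, in three moves:
\begin{itemize}
\item Integrating $\xi+D\varphi=e+p$ over the torus gives $\int e\,dy=\xi-p(]0,1])$. This uses that the jump at $y=1$ is counted in $p$ as the jump of the periodic extension, so $D\varphi(]0,1])=0$.
\item The ordering $\kappa\le\eta$ bounds the dissipation below by $\kappa|p|(]0,1])\ge\kappa|p(]0,1])|$.
\item The weighted Cauchy--Schwarz inequality bounds the elastic energy below by $\frac12\bar\alpha(\xi-p(]0,1]))^2$.
\end{itemize}
Your explicit triplets, which are exactly the minimizers the paper finds, give the reverse inequality.

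\textbf{What each route buys.} Your argument is elementary and self-contained. It needs neither the existence of a minimizer nor any justification of \eqref{eq:ELeq}. It also makes the final matching with $\min_q\{\frac12\bar\alpha|\xi-q|^2+\kappa|q|\}$ explicit. The paper's route additionally yields the structure of every minimizer: constant stress, and plastic strain confined to the phase with the lower yield stress. You could recover this from the equality cases of your two inequalities.

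Incidentally, your lower bound is the primal analogue of the constant-stress bound that gives $W_{min}$ in Proposition \ref{prop:bds-Whom}. Unlike that bound, it needs no proportionality between moduli and yield stresses, and in one dimension it is attained.
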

 
 We finally investigate the multi-dimensional setting and produce bounds on $W^{hom}$ which, unfortunately, do not allow us to prove, or disprove the elasto-plastic character of $W^{hom}$ as was the case in one dimension.

\subsubsection{Bounds on the homogenized energy $W^{hom}$}\label{subsub:bounds}
We will restrict our attention to the case where 
\bel{eq:ci=i}
 \C_1=\alpha \II,\; \C_2=\beta \II \;(\alpha,\beta>0)
\ee
and also where, for some $\lambda>0$,
\bel{eq:ki=ball}
K_1={\overline B}(0,\lambda\alpha),\;  K_2={\overline B}(0,\lambda\beta).
\ee
This is of course a restrictive assumption.
\par
If $\xi\in\R^N\mapsto F(\xi)\ge 0$ is a  convex function  whose Legendre transform $F^*$ satisfies $F^*(\xi)=+\infty$ as $|\xi|\ge R$, 
we set
$$
F^{hom}(\xi):=\inf_\ph  \left\{\int_\ys(\chi_1(y)\alpha+(1-\chi_1(y))\beta) F(\xi+\nabla \ph)\ dx: \ph\in W^{1,1}( \ys)\right\}
$$
where $\chi_1(y)$ denotes the characteristic function of, say, phase 1, with $\int_\ys \chi_1(y) dy=\theta$. Then, 
observing that, for $t>0$, $(tF)^*(\xi)=tF^*(\xi/t)$, and using Fan's mini-max theorem (see e.g. \cite[Theorem 3.8]{ambrosio-tr}, with $\tau \in L^2(\ys;\R^N)$ with $|\tau| \le R$ under the weak topology) 
\begin{multline}\label{eq:exp-f*}
F^{hom}(\xi)\!=\displaystyle\inf_\ph \sup_{\tau(y)}\left\{\int_\ys \left[(\xi+\nabla \ph)\cdot \tau\!-\!(\chi_1(y)\alpha+(1\!-\!\chi_1(y))\beta)F^*\!\!\left(\frac{\tau}{\chi_1(y)\alpha+(1-\chi_1(y))\beta}\right)\right] \!dy\!\right\}\\[2mm]
=\displaystyle\sup_{\tau(y)}\inf_\ph \left\{\int_\ys \left[(\xi+\nabla \ph)\cdot \tau- (\chi_1(y)\alpha+(1\!-\chi_1(y))\beta)F^*\left(\frac{\tau}{\chi_1(y)\alpha+(1-\chi_1(y))\beta}\right)\right]\ dy\right\}\\[2mm]
=\sup_{\tau(y)}\left\{T\cdot\xi-\int_\ys \left[(\chi_1(y)\alpha+(1-\chi_1(y))\beta)F^*\left(\frac{\tau}{\chi_1(y)\alpha+(1-\chi_1(y))\beta}\right)\right]\ dy: \right.\\[2mm]\left.\tau \text{ with } {\rm div\;}\tau=0, T=\int_\ys\tau(y) dy\right\}.
\end{multline}
With this in mind, we proceed with the computation of $W^{hom}$ under assumptions \eqref{eq:ci=i}, \eqref{eq:ki=ball}.
It is immediately seen that
$$
W(y,\xi)=(\chi_1(y)\alpha+(1-\chi_1(y)\beta)\hat W(\xi)
$$
with
$$
\hat W(\xi)=\min_{\eta\in\R^N}\left\{ \frac{1}{2}|\xi-\eta|^2+\lambda |\eta|\right\}.
$$
In other words $\hat W$ can be thought of as corresponding to an elasto-plastic material with elasticity $\II$ and yield stress $\lambda$ (i.e., $K={\overline B}(0,\lambda)$). Recalling the last equality in \eqref{eq:exp-f*}, we get that
\begin{multline*}
W^{hom}(\xi)=\sup_{\tau(y)}\left\{T\cdot\xi-\int_\ys \left[(\chi_1(y)\alpha+(1-\chi_1(y))\beta)(\hat W)^*\left(\frac{\tau}{\chi_1(y)\alpha+(1-\chi_1(y))\beta}\right)\right]\ dy: \right.\\[1mm]\left.\tau \text{ with } {\rm div\;}\tau=0, T=\int_\ys\tau(y) dy\right\}.
\end{multline*}
But $(\hat W)^*(\xi)= \frac{1}{2}|\xi|^2+\mathbf I_{{\overline B}(0,\lambda)}(\xi).$ Consequently,
\begin{multline}\label{eq:compwhom}
W^{hom}(\xi)=\sup_{\tau(y)}\left\{T\cdot\xi-\int_\ys \left[ \frac{1}{2}\left(\frac{\chi_1(y)}\alpha+\frac{(1-\chi_1(y))}\beta\right)|\tau|^2+\right.\right.\\[1mm]\left.\left.{(\chi_1(y)\alpha+(1\!-\!\chi_1(y))\beta)}\mathbf I_{{\overline B}(0,\lambda)}\left(\frac\tau{\chi_1(y)\alpha+(1-\chi_1(y))\beta}\right)\right]\ dy: \tau \text{ with } {\rm div\;}\tau=0, T=\!\int_\ys\tau(y) dy\right\}\\[2mm]
=\sup_{T\in\R^N}\left\{T\cdot\xi- \frac{1}{2}\inf_{\tau(y)}\left[\int_\ys \left(\frac{\chi_1(y)}\alpha+\frac{(1-\chi_1(y))}\beta\right)|\tau|^2\ dy\right]:\tau \text{ with } {\rm div\;}\tau=0, \right.\\[1mm]\left. |\tau(y)|\le \lambda (\chi_1(y)\alpha+(1-\chi_1(y))\beta),\;T=\!\int_\ys\tau(y) dy\right\}.
\end{multline}

We can specialize the $\tau$'s to be constant fields ($\tau=T$). Then \eqref{eq:compwhom} becomes
\begin{multline*}
W^{hom}(\xi)\ge \sup_{T\in\R^N}\left\{T\cdot\xi-\frac{1}{2}\left(\frac{\theta}\alpha+\frac{(1-\theta)}\beta\right)|T|^2: |T|\le \min{(\lambda \alpha,\lambda\beta)}\right\}\\[1mm]
=\min_{p\in\R^N}\left\{\frac{1}{2}\left(\frac{\theta}\alpha+\frac{(1-\theta)}\beta\right)^{-1}|\xi-p|^2+\min{(\lambda \alpha,\lambda\beta)}|p| \right\}.
\end{multline*}
In other words, defining $W_{min}$ to be the one-field energy associated with the elasto-plastic material characterized by
\bel{eq:wmin}
\C=\left(\frac{\theta}\alpha+\frac{(1-\theta)}\beta\right)^{ -1}\II,\;K={\overline B}(0, \min{(\lambda \alpha,\lambda\beta)},
\ee
 we get
$$
 W^{hom}(\xi)\ge W_{min}(\xi).
 $$
 We can also remark that by Jensen's inequality for  $(t,\xi)\mapsto |\xi|^2/t$, for every admissible $\tau$ we get
$$
 \int_\ys \left(\frac{\chi_1(y)}\alpha+\frac{(1-\chi_1(y))}\beta\right)|\tau|^2\ dy\ge \frac{|T|^2}{\theta\alpha+(1-\theta)\beta}.
$$
 So, since $|\tau(y)|\le \lambda (\chi_1(y)\alpha+(1-\chi_1(y))\beta)$ implies that $|T|\le {(\theta\lambda\alpha+(1-\theta) \lambda\beta)}$, recalling \eqref{eq:compwhom} once more, we obtain 
\begin{multline*}
W^{hom}(\xi)\le \sup_{T\in\R^N}\left\{T\cdot\xi-\frac{1}{2} \frac{|T|^2}{\theta\alpha+(1-\theta)\beta}: |T|\le {(\theta\lambda\alpha+(1-\theta) \lambda\beta)} \right\}\\[1mm]=\min_{p\in\R^N}\left\{\frac{1}{2}(\theta\alpha+(1-\theta)\beta)|\xi-p|^2+{(\theta\lambda\alpha+(1-\theta) \lambda\beta)}|p|\right\}.
\end{multline*}
In other words, defining $W_{max}$ to be the one-field energy associated with the elasto-plastic material characterized by
\bel{eq:wmax}
\C=\left({\theta}\alpha+{(1-\theta)}\beta\right)\II,\;K={\overline B}(0, {\theta\lambda \alpha+(1-\theta)\lambda\beta)},
\ee
 we get
 $$
 W^{hom}(\xi)\le W_{max}(\xi).
 $$
 We have thus proved the following
 \begin{proposition}\label{prop:bds-Whom} Assuming that \eqref{eq:ci=i}, \eqref{eq:ki=ball} hold true,
 the homogenized energy $W^{hom}$ satisfies
 $$
 W_{min}(\xi)\le W^{hom}(\xi)\le W_{max}(\xi)
 $$
 where $W_{min}, W_{max}$ defined through  \eqref{eq:wmin}, \eqref{eq:wmax} are associated with an elasto-plastic material.
 \end{proposition}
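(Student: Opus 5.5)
The plan is to work entirely from the dual representation \eqref{eq:compwhom}, which expresses $W^{hom}(\xi)$ as a supremum over $T\in\R^N$ of $T\cdot\xi-\frac12 G(T)$. Here
$$
G(T):=\inf\Big\{\int_\ys\Big(\tfrac{\chi_1}{\alpha}+\tfrac{1-\chi_1}{\beta}\Big)|\tau|^2dy:\ {\rm div}\,\tau=0,\ |\tau|\le\lambda(\chi_1\alpha+(1-\chi_1)\beta),\ \textstyle\int_\ys\tau=T\Big\},
$$
with $G(T)=+\infty$ when the constraint set is empty. Both bounds then come from bounding $G$ from above and from below by explicit functions of $T$, because $G\mapsto \sup_T\{T\cdot\xi-\frac12 G(T)\}$ is order-reversing.

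\emph{Lower bound.} I will test with constant fields $\tau\equiv T$. Such a field is divergence free and has mean $T$. It satisfies the pointwise constraint exactly when $|T|\le\lambda\min(\alpha,\beta)$, and then $G(T)\le(\theta/\alpha+(1-\theta)/\beta)|T|^2$. This gives $W^{hom}(\xi)\ge\sup_{|T|\le\lambda\min(\alpha,\beta)}\{T\cdot\xi-\frac12 A|T|^2\}$ with $A=\theta/\alpha+(1-\theta)/\beta$. The remaining step is to recognize this as the Legendre transform of $\frac12 A|\cdot|^2+\mathbf I_{\overline B(0,\lambda\min(\alpha,\beta))}$. By the standard inf-convolution duality already used in \eqref{eq:eq-infs}, it equals $\min_p\{\frac12A^{-1}|\xi-p|^2+\lambda\min(\alpha,\beta)|p|\}$, which is $W_{min}(\xi)$.

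\emph{Upper bound.} For any admissible $\tau$ I will bound $G$ from below. Jensen's inequality for the jointly convex, positively $1$-homogeneous map $(t,v)\mapsto|v|^2/t$ gives
$$
\int_\ys\frac{|\tau|^2}{c(y)}\,dy\ge\frac{|T|^2}{\int_\ys c}, \qquad c(y)=\chi_1\alpha+(1-\chi_1)\beta .
$$
This can also be done by Cauchy--Schwarz, writing $\tau=(\tau/\sqrt c)\sqrt c$. Integrating the pointwise bound yields $|T|\le\lambda(\theta\alpha+(1-\theta)\beta)$, so $G(T)=+\infty$ outside that ball. Hence $W^{hom}(\xi)\le\sup_{|T|\le\lambda\bar c}\{T\cdot\xi-\frac12|T|^2/\bar c\}$ with $\bar c=\theta\alpha+(1-\theta)\beta$. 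The same Legendre duality turns this into $W_{max}(\xi)$.

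\emph{Main obstacle.} The delicate step is not these estimates but the validity of \eqref{eq:compwhom} itself. That needs the exchange of $\inf$ and $\sup$ via Fan's theorem, which I take from the text as established, and it needs the inner infimum over $\ph$ to force ${\rm div}\,\tau=0$. I would also check the convex-duality identity in the form $\sup_{|T|\le r}\{T\cdot\xi-\frac{1}{2a}|T|^2\}=\min_p\{\frac a2|\xi-p|^2+r|p|\}$. This is elementary: take $p=0$ if $|\xi|\le r/a$, and otherwise $p=\xi(1-r/(a|\xi|))$. Finally, both $W_{min}$ and $W_{max}$ have exactly the form \eqref{eq:eq-infs} with isotropic elasticity and a ball as yield set, so they are elasto-plastic, as the statement claims.
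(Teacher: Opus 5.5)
Your proposal is correct and follows the paper's argument essentially step for step. You start from \eqref{eq:compwhom}, test with constant fields $\tau\equiv T$ for the lower bound, and use Jensen's inequality for $(t,v)\mapsto|v|^2/t$ together with the averaged pointwise constraint $|T|\le\lambda(\theta\alpha+(1-\theta)\beta)$ for the upper bound, then apply the same Legendre/inf-convolution identification with $W_{min}$ and $W_{max}$.
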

 
 \begin{remark} Unfortunately, the above proposition does not prove or disprove the conjecture that $W^{hom}$ is always -- at least in the case of a two-phase microstructure  further satisfying \eqref{eq:ci=i}, \eqref{eq:ki=ball} -- associated with an elasto-plastic behaviour. Summing up, we contend that the extent to which elasto-plastic behaviour is preserved by homogenization is still unknown. 
 
An example where this is so in the vectorial setting has been put forth in the Appendix of \cite{DQ}. In that example,    the associated energies lack  coercivity bounds of the type implied by \eqref{eq:Cinc} (the associated  $x$-independent elasticity is degenerate; see (A1) in that paper), so that the $\Gamma$-convergence result does not provide any information about the asymptotic behavior of possibly non-existing minimizers. Further, the choice of the yield conditions (the equivalent of the choice of the sets $K_i, i=1,2$) is ad hoc and does not fit  any standard yield criterion.
 \hfill\P\end{remark}

\section*{Appendix: distributions and measures on the periodic torus}\label{sec:appendix}

\setcounter{theorem}{0}
\setcounter{equation}{0}
\setcounter{section}{1}

\renewcommand{\thesection}{\Alph{section}}

In this Appendix we  offer a self-contained approach to the study of distributions, and in particular of measures, on the periodic torus $\ys:=\R^N/\Z^N$ ($\pi_\ys$ being the associated projection). The approach is based on the natural identification between functions on $\ys$ and $\Z^N$-periodic functions on $\R^N$. Recall that, for $\varphi$ function on $\ys$, we denote with $\varphi_{\R^N}$ the associated periodic function on $\R^N$ and conversely, given a periodic function $\psi$ on $\R^N$, we write $\psi_\ys$ for the associated function on $\ys$.
 \par
For $\psi\in \ds(\R^N)$ let us set
\begin{equation}
\label{def:psihat}
\hat \psi(x):=\sum_{j\in\Z^N}\psi(x+j).
\end{equation}
Notice that $\hat\psi$ is a smooth periodic function on $\R^N$, so that $\hat \psi_{\ys}\in \ds(\ys)$. For every $\Phi\in \ds'(\ys)$ ($\ds(\ys)$ is endowed with the usual Schwartz topology) we set
$$
\pscal{\Phi_{\R^N}}{\psi}:=\pscal{\Phi}{\hat \psi_{\ys}}.
$$

\begin{proposition}
\label{prop:D1}
The following items hold true.
\begin{itemize}
\item[(a)] $\Phi_{\R^N} \in \ds'(\R^N)$.
\item[(b)] $\Phi_{\R^N}$ is periodic, i.e., for every $j\in\Z^N$
$$
\pscal{\Phi_{\R^N}}{\tau_j\psi}=\pscal{\Phi_{\R^N}}{\psi},
$$
where $\tau_j\psi(x):=\psi(x-j)$.
\item[(c)] $D_y \Phi_{\R^N}=(D_y\Phi)_{\R^N}$.
\end{itemize}
\end{proposition}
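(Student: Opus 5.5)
The three items follow directly from the definition $\pscal{\Phi_{\R^N}}{\psi}:=\pscal{\Phi}{\hat\psi_\ys}$, provided we first check that the periodization map $\psi\mapsto\hat\psi_\ys$ behaves well. The plan has one preliminary step and then treats (a), (b), (c) in order.

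\emph{Preliminary: continuity of the periodization map.} We show that $\psi\mapsto\hat\psi_\ys$ is linear and continuous from $\ds(\R^N)$ to $\ds(\ys)$. Fix a compact $K\subset\R^N$ and take $\psi$ supported in $K$. Only finitely many translates $\psi(\cdot+j)$, say $M_K$ of them, meet any given unit cell. Hence for every multi-index $\alpha$,
$$
\sup_{\ys}|D^\alpha\hat\psi_\ys|\le M_K\sup_{\R^N}|D^\alpha\psi|,
$$
since the series defining $\hat\psi$ is locally finite and can be differentiated term by term. This bound controls every seminorm of $\hat\psi_\ys$ by the corresponding seminorm of $\psi$ on $\ds_K$, which gives the continuity.

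\emph{Item (a).} Compose $\Phi$ with the continuous linear map above. Linearity of $\Phi_{\R^N}$ is immediate. For continuity, a sequence $\psi_n\to0$ in $\ds(\R^N)$ has supports in a fixed compact set and converges to $0$ in every $C^k$ norm. By the preliminary bound, $\hat\psi_{n,\ys}\to0$ in $\ds(\ys)$, so $\pscal{\Phi}{\hat\psi_{n,\ys}}\to0$.

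\emph{Item (b).} Reindex the lattice sum. For $j\in\Z^N$,
$$
\widehat{\tau_j\psi}(x)=\sum_k\psi(x+k-j)=\hat\psi(x),
$$
because $k\mapsto k-j$ is a bijection of $\Z^N$. Therefore $\pscal{\Phi_{\R^N}}{\tau_j\psi}=\pscal{\Phi}{\hat\psi_\ys}=\pscal{\Phi_{\R^N}}{\psi}$.

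\emph{Item (c).} Periodization commutes with differentiation: term-by-term differentiation of the locally finite sum gives $\widehat{\partial_i\psi}=\partial_i\hat\psi$. Combined with the paper's convention $D_y\varphi=(D_x\varphi_{\R^N})_\ys$, this yields $(\widehat{\partial_i\psi})_\ys=\partial_{y_i}\hat\psi_\ys$. Then
$$
\pscal{\partial_i\Phi_{\R^N}}{\psi}=-\pscal{\Phi_{\R^N}}{\partial_i\psi}=-\pscal{\Phi}{\partial_{y_i}\hat\psi_\ys}=\pscal{\partial_{y_i}\Phi}{\hat\psi_\ys}=\pscal{(\partial_{y_i}\Phi)_{\R^N}}{\psi}.
$$
The third equality uses the definition of the distributional derivative on $\ys$.

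The only step requiring real care is the preliminary one: checking that the locally finite sum can be differentiated term by term, with a count $M_K$ of overlapping translates that is uniform on the compact set $K$. Everything else is algebraic manipulation of the definition.
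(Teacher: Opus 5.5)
Your proof is correct and follows essentially the same route as the paper. In both, a uniform count of overlapping translates on a compact support bounds the seminorms of $\hat\psi_{\ys}$ and gives (a); the paper phrases this through the finite order $m$ of $\Phi$ rather than sequential continuity, which is equivalent here. Item (b) is the reindexing $\widehat{\tau_j\psi}=\hat\psi$, and item (c) is the commutation $\widehat{D\psi}=D\hat\psi$ inserted into the definition of the distributional derivative.
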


\begin{proof}
If $K\subset\R^N$ is compact  and the support of $\psi$ is in $K$, then for every $x\in\R^N$ only a finite number $N_K$ of indices $j$ depending on $K$ are involved in the sum \eqref{def:psihat} defining $\hat\psi$ at $x$. 
If $\Phi$ has order $m$ 
$$
|\pscal{\Phi_{\R^N}}{\psi}|=|\pscal{\Phi}{\hat\psi_{\ys}}|\le C\max_{|\alpha|\le m}\|D^\alpha \hat\psi_{\ys}\|_\infty
=C\max_{|\alpha|\le m}\|D^\alpha \hat\psi\|_\infty \le CN_K \max_{|\alpha|\le m}\|D^\alpha\psi\|_\infty,
$$
that is $\Phi_{\R^N}$ is a distribution of order $m$ on $\R^N$. Point (a) is proved.
\par
 The periodicity of $\Phi_{\R^N}$ follows since 
$$
\pscal{\Phi_{\R^N}}{\tau_j\psi}=\pscal{\Phi_{\R^N}}{\widehat{\tau_j\psi}_\ys}=\pscal{\Phi_{\R^N}}{\widehat{\psi}_\ys}=\pscal{\Phi_{\R^N}}{\psi}
$$
as $\widehat{\tau_j\psi}=\hat \psi$.
\par
Concerning point (c) we can write
$$
\pscal{D_y\Phi_{\R^N}}{\psi}=-\pscal{\Phi_{\R^N}}{D_y\psi}=-\pscal{\Phi}{\widehat{D_y\psi}_\ys}=-\pscal{\Phi}{D_y\widehat{\psi}_\ys}=\pscal{D_y\Phi}{\widehat{\psi}_\ys}=\pscal{(D_y\Phi)_{\R^N}}{\psi},
$$
and the result follows.
\end{proof}

\begin{remark}
\label{rem:cube1}
If $\varphi\in \ds(\ys)$ is supported in $\pi_\ys(Q)$ with $Q$ an open cube of side less than $1$, then
$$
\pscal{\Phi_{\R^N}}{(\varphi_{\R^N})\lfloor{Q}}=\pscal{\Phi}{\varphi}
$$
where $(\varphi_{\R^N})\lfloor{Q}$ can be seen as an element of $\ds(Q)$. Indeed $\widehat{(\varphi_{\R^N})\lfloor{Q}}=\varphi_{\R^N}$ as
 the supports of $\tau_{j_1}\varphi$ and $\tau_{j_2}\varphi$ do not intersect   when $j_1\ne j_2$.
\hfill\P
\end{remark}

Regularization by convolution of distributions on $\ys$ can be naturally defined by considering the associated periodic distributions on $\R^N$ and performing the standard regularization. Specifically, given $\Phi\in \ds'(\ys)$ and a family of regularizing symmetric kernels $\{\rho_\e\}_{0<\e<1}$ on $\R^N$, we set
$$
\Phi*\rho_\e^y:= (\Phi_{\R^N}*\rho_\e)_{\ys}\in \ds(\ys).
$$
The definition is meaningful since $\Phi_{\R^N}*\rho_\e$ is a periodic smooth function on $\R^N$ because $\Phi_{\R^N}$ is periodic and
$$
(\Phi_{\R^N}*\rho_\e)(x)=\pscal{\Phi_{\R^N}}{\rho_\e(x-\cdot)}
$$

\begin{proposition}
\label{prop:conv-ys}
The following items hold true.
\begin{itemize}
\item[(a)] $D_y(\Phi*\rho_\e^y)=D_y\Phi*\rho_\e^y$.
\vskip5pt
\item[(b)] $\Phi*\rho_\e^y\weakst \Phi$ weakly* in $\ds'(\ys)$ as $\e\to 0^+$.
\end{itemize}
\end{proposition}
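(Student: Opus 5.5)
The argument is a transfer to $\R^N$ through the periodic extension $\Phi_{\R^N}$, using Proposition \ref{prop:D1} and the standard Euclidean facts on convolution of distributions. The key identity is that for $\varphi\in\ds(\ys)$ and $\psi\in\ds(\R^N)$ with $\hat\psi_\ys=\varphi$ (such a $\psi$ exists, e.g. $\psi=\zeta\varphi_{\R^N}$ with $\zeta\in\ds(\R^N)$ a partition-of-unity cutoff satisfying $\hat\zeta\equiv 1$), one has
$$
\pscal{\Phi*\rho^y_\e}{\varphi}=\int_{\ys}(\Phi_{\R^N}*\rho_\e)_\ys\,\varphi\,dy=\int_{\R^N}(\Phi_{\R^N}*\rho_\e)\,\psi\,dx .
$$
This holds because for a periodic smooth function $f$, $\int_{\R^N}f\psi\,dx=\int_{[0,1)^N}f\hat\psi\,dx=\int_\ys f_\ys\,\hat\psi_\ys\,dy$, obtained by summing over the translates $j+[0,1)^N$.

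For item (a), I will use the classical fact that on $\R^N$ the smooth function $\Phi_{\R^N}*\rho_\e$ satisfies $D(\Phi_{\R^N}*\rho_\e)=(D\Phi_{\R^N})*\rho_\e$. By Proposition \ref{prop:D1}(c), $D\Phi_{\R^N}=(D_y\Phi)_{\R^N}$. Since $D_y$ of the projection to $\ys$ of a smooth periodic function is the projection of its Euclidean gradient (by the definition of $D_y$), I get
$$
D_y(\Phi*\rho^y_\e)=\big(D(\Phi_{\R^N}*\rho_\e)\big)_\ys=\big((D_y\Phi)_{\R^N}*\rho_\e\big)_\ys=D_y\Phi*\rho^y_\e .
$$

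For item (b), I fix $\varphi\in\ds(\ys)$ and choose $\psi$ as above. By the identity, the symmetry of $\rho_\e$ and the definition of convolution of a distribution,
$$
\pscal{\Phi*\rho^y_\e}{\varphi}=\pscal{\Phi_{\R^N}}{\psi*\rho_\e}.
$$
Next, $\widehat{\psi*\rho_\e}=\hat\psi*\rho_\e=\varphi_{\R^N}*\rho_\e$, so this equals $\pscal{\Phi}{(\varphi_{\R^N}*\rho_\e)_\ys}$. The convergence $\varphi_{\R^N}*\rho_\e\to\varphi_{\R^N}$ holds in $C^\infty$ uniformly on $\R^N$ (by periodicity, uniform convergence on a compact fundamental domain suffices), so $(\varphi_{\R^N}*\rho_\e)_\ys\to\varphi$ in $\ds(\ys)$. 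Continuity of $\Phi$ then gives the limit $\pscal{\Phi}{\varphi}$. Alternatively, one can use $\psi*\rho_\e\to\psi$ in $\ds(\R^N)$ with supports in a fixed compact set, and Proposition \ref{prop:D1}(a).

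The only delicate point is justifying the pairing identity, namely that the convolution $\Phi_{\R^N}*\rho_\e$, tested against $\psi$, coincides with the torus pairing. This requires the Fubini-type exchange $\pscal{\Phi_{\R^N}*\rho_\e}{\psi}=\pscal{\Phi_{\R^N}}{\rho_\e*\psi}$, which is standard for distributions convolved with test functions, together with the commutation $\widehat{\psi*\rho_\e}=\hat\psi*\rho_\e$. The latter follows from the absolutely (locally finitely) convergent sum defining $\hat\psi$. I expect no genuine obstacle beyond this bookkeeping.
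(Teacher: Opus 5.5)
Your argument is correct, but it is organized differently from the paper's.

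The paper works chart by chart. It tests only against $\varphi$ supported in $\pi_\ys(Q)$ for a cube $Q$ of side less than $1-\e$, and uses Remark \ref{rem:cube1} to turn the torus pairing into a Euclidean pairing against $(\varphi_{\R^N})\lfloor Q$. It then proves both (a) and (b) by moving derivatives and the convolution onto such localized test functions in $\R^N$, and recovers a general $\varphi$ through a partition of unity.

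You instead lift an arbitrary $\varphi$ globally to $\psi=\zeta\varphi_{\R^N}$ with $\hat\zeta\equiv1$. Such a $\zeta$ exists: take $\zeta=\zeta_0/\hat\zeta_0$ with $\zeta_0\in\ds(\R^N)$ nonnegative and positive on $[0,1]^N$. This lift yields the exact transposition formula $\pscal{\Phi*\rho^y_\e}{\varphi}=\pscal{\Phi}{(\varphi_{\R^N}*\rho_\e)_\ys}$ for every $\e$ and every $\varphi\in\ds(\ys)$. Item (b) then reduces to $(\varphi_{\R^N}*\rho_\e)_\ys\to\varphi$ in $\ds(\ys)$. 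For (a) you avoid test functions altogether: you differentiate the smooth periodic function $\Phi_{\R^N}*\rho_\e$ on $\R^N$ and invoke Proposition \ref{prop:D1}(c). The only implicit fact is that on $\ys$ the distributional gradient of a smooth function is its classical one, which is integration by parts on the torus.

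Your route buys two things. It removes the $\e$-dependent restriction on the cube size and the reassembly step. It also pays off in Proposition \ref{prop:meas-reg}: for $\mu\in\ms_b(\ys)$ and $\rho_\e\ge0$ of unit mass, your formula gives at once $|\mu*\rho^y_\e|(\ys)\le|\mu|(\ys)$. The paper's localized argument must instead estimate the commutator between the partition of unity and the convolution, and only reaches this bound up to an $O(\e)$ error.

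The paper's approach buys something in exchange. Every computation stays inside a chart where $\ys$ is literally a Euclidean cube, so only the standard Euclidean facts packaged in Remark \ref{rem:cube1} are ever needed.
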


\begin{proof}
Let $f_\e:=\Phi_{\R^N}*\rho_\e$ and let $\varphi$ be supported in $\pi_\ys(Q)$ with $Q\subset \R^N$ an open cube with side less than $1-\e$. Then by definition of the distribution associated to a function, denoting with $Y_Q$ a cube with the same center as $Q$ and side 1, 
\begin{equation}
\label{eq:1conv}
\pscal{\Phi*\rho_\e^y}{\varphi}=\int_Y f_\e \varphi_{\R^N}\,dx=\int_{Y_Q}f_\e \varphi_{\R^N}\,dx=\pscal{\Phi_{\R^N}}{(\varphi_{\R^N}*\rho_\e)\lfloor{Q}}.
\end{equation}
As a consequence, taking into account Remark \ref{rem:cube1}, we have 
\begin{multline*}
\pscal{D_y(\Phi*\rho_\e^y)}{\varphi}=-\pscal{\Phi_{\R^N}}{((D_y\varphi)_{\R^N}*\rho_\e)\lfloor{Q}}
=-\pscal{\Phi_{\R^N}}{D_y((\varphi_{\R^N})\lfloor{Q}*\rho_\e)}\\
=\pscal{D_y\Phi_{\R^N}}{(\varphi_{\R^N})\lfloor{Q}*\rho_\e}=\pscal{(D_y\Phi)_{\R^N}}{(\varphi_{\R^N})\lfloor{Q}*\rho_\e}\\=\pscal{(D_y\Phi)_{\R^N}*\rho_\e}{(\varphi_{\R^N})\lfloor{Q}}=\pscal{D_y\Phi*\rho_\e^y}{\varphi}.
\end{multline*}
Since every function $\varphi\in \ds(\ys)$ can be seen as a sum of functions of the type above through a partition of unity, point (a) follows.
\par
Let us come to point (b). In view of Remark \ref{rem:cube1} we deduce from \eqref{eq:1conv} that
\begin{equation}
\label{eq:lim-weak}
\lim_{\e\to 0^+}\pscal{\Phi*\rho_\e^y}{\varphi}=\pscal{\Phi_{\R^N}}{(\varphi_{\R^N})\lfloor{Q}}=\pscal{\Phi}{\varphi}.
\end{equation}
Once again, since every function $\varphi\in \ds(\ys)$ can be seen as a sum of functions of the type above, the desired weak* convergence follows. 
\end{proof}

In this paper we are mostly interested in measures on $\ys$ which can be viewed as distributions of order zero thanks to Riesz representation theorem. If $\mu\in \ms_b(\ys)$, then the associated periodic order-zero distribution $\mu_{\R^N}$ will be a periodic Radon measure (unbounded if not zero) on $\R^N$. Indeed,

\begin{proposition}[\bf Convolutions of measures]
\label{prop:meas-reg}
If $\mu\in \ms_b(\ys)$ then
$$
\mu*\rho_\e^y \weakst \mu \qquad\text{weakly* in $\ms_b(\ys)$}
$$
with 
\begin{equation}
\label{eq:strict}
\lim_{\e\to 0^+}|\mu*\rho_\e^y|(\ys)=|\mu|(\ys).
\end{equation}
\end{proposition}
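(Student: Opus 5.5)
The plan is to pass through the periodic extension $\mu_{\R^N}$ and use the standard Euclidean facts about mollification of Radon measures, transferring them back to $\ys$ by means of Proposition \ref{prop:D1} and Remark \ref{rem:cube1}. By the Riesz representation theorem, $\mu_{\R^N}$ is a periodic, locally finite Radon measure on $\R^N$. For every Borel set $B$ contained in a unit cube we have $|\mu_{\R^N}|(B)=|\mu|(\pi_\ys(B))$, which can be checked on test functions supported in cubes of side less than $1$ together with a partition of unity. Consequently $f_\e:=\mu_{\R^N}*\rho_\e$ is a smooth periodic function, and $\mu*\rho^y_\e$ is the measure $(f_\e)_\ys\,\leb^N_y$ on $\ys$.

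\textbf{Weak* convergence.} Proposition \ref{prop:conv-ys}(b) already gives $\pscal{\mu*\rho_\e^y}{\varphi}\to\pscal{\mu}{\varphi}$ for every $\varphi\in\ds(\ys)$. To upgrade this to weak* convergence in $\ms_b(\ys)$, i.e. against $C^0(\ys)$, it suffices to show the uniform bound $|\mu*\rho_\e^y|(\ys)\le|\mu|(\ys)$ and then use the density of $\ds(\ys)$ in $C^0(\ys)$, since $\ys$ is compact.

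\textbf{Mass bound.} For the bound, write $Y=[0,1)^N$. Pointwise we have $|f_\e(x)|\le(|\mu_{\R^N}|*\rho_\e)(x)$. Integrating over $Y$ and using Fubini,
$$
\int_Y|f_\e|\,dx\le\int_{\R^N}\rho_\e(z)\,|\mu_{\R^N}|(Y-z)\,dz=|\mu|(\ys),
$$
because each translate $Y-z$ is a fundamental domain. By periodicity of $|\mu_{\R^N}|$, its mass on any fundamental domain equals $|\mu|(\ys)$. This fundamental-domain computation is the only genuinely torus-specific point, and the one requiring care: one must verify that $|\mu_{\R^N}|$ is itself periodic with $|\mu_{\R^N}|(Y+z)=|\mu|(\ys)$ for all $z$. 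I would obtain this by noting that $|\mu_{\R^N}|$ is the periodic extension of $|\mu|$; the total variation commutes with the local identification given by Remark \ref{rem:cube1}, and periodicity follows from Proposition \ref{prop:D1}(b).

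\textbf{Strict convergence.} Finally, \eqref{eq:strict} follows by combining the upper bound $\limsup_{\e\to0^+}|\mu*\rho_\e^y|(\ys)\le|\mu|(\ys)$ with the lower semicontinuity of the total variation under weak* convergence, which gives $|\mu|(\ys)\le\liminf_{\e\to0^+}|\mu*\rho_\e^y|(\ys)$.
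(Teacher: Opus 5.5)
Your argument is correct, but you obtain the key upper bound by a different route from the paper. The paper never computes $|\mu_{\R^N}|$. It tests $\mu*\rho^y_\e$ against $\varphi\in\ds(\ys)$ with $\|\varphi\|_\infty\le 1$ and splits $\varphi$ with a partition of unity $\{\zeta_i\}$ subordinated to small cubes $\pi_\ys(Q_i)$. The commutator estimate $\|(\zeta_i\varphi)_{\R^N}*\rho_\e-(\zeta_i)_{\R^N}(\varphi_{\R^N}*\rho_\e)\|_\infty\le C_i\e\|\varphi\|_\infty$, together with Remark \ref{rem:cube1}, then gives $\pscal{\mu*\rho^y_\e}{\varphi}\le\pscal{\mu}{(\varphi_{\R^N}*\rho_\e)_\ys}+C\e$, hence $|\mu*\rho^y_\e|(\ys)\le|\mu|(\ys)+C\e$. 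Only duality on $\ys$ and local finiteness of $\mu_{\R^N}$ are used. You instead transplant the Euclidean proof of \cite[Theorem 2.2]{ambrosio.fusco.pallara}: pointwise domination by $|\mu_{\R^N}|*\rho_\e$ (which needs $\rho_\e\ge 0$), followed by Tonelli over a fundamental domain. The price is the extra fact that $|\mu_{\R^N}|$ is the periodic extension of $|\mu|$. Your sketch proves it: locality of the total variation on small open cubes via Remark \ref{rem:cube1}, then a disjoint decomposition of $[0,1)^N+z$ into Borel pieces lying in such cubes. However, you should state the identity $|\mu_{\R^N}|(B)=|\mu|(\pi_\ys(B))$ only for Borel sets $B$ on which $\pi_\ys$ is injective, such as subsets of a half-open cube. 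It fails for closed cubes: for $\mu=\delta_{\pi_\ys(0)}$ and $B=[0,1]^N$ the left side is $2^N$ and the right side is $1$. What you gain is the sharp, non-asymptotic bound $|\mu*\rho^y_\e|(\ys)\le|\mu|(\ys)$ for every $\e$. This is exactly the form of estimate invoked via \cite[Theorem 2.2(b)]{ambrosio.fusco.pallara} in the proof of Lemma \ref{lem:trans3}, whereas the paper's argument only delivers it up to an $O(\e)$ error.
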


\begin{proof}
Weak* convergence in $\ds'(\ys)$ has been proved in Proposition \ref{prop:conv-ys}. If we check \eqref{eq:strict}, then by classical arguments the convergence is also in the weak* topology of $\ms_b(\ys)$ since continuous functions can be approximated uniformly by smooth functions on $\ys$. It suffices to check that
$$
\limsup_{\e\to 0^+}|\mu*\rho_\e^y|(\ys)\le |\mu|(\ys),
$$
the inequality $|\mu|(\ys)\le \liminf_{\e\to0^+}|\mu*\rho_\e^y|(\ys)$ being standard. If $\varphi\in \ds(\ys)$ and if $\{\zeta_i\}_{i=1,\dots,k}$ is a partition of unity on $\ys$ subordinated to an open cover of the type $\{\pi_\ys(Q_i)\}_{i=1,\dots,k}$ with $Q_i$ open cube in $\R^N$ with side less than 1, we have for $\e$ small enough
$$
\pscal{\mu*\rho_\e^y}{\varphi}=
\sum_{i=1}^k\int_{Q_i}(\zeta_i\varphi)_{\R^N}*\rho_\e\,d\mu_{\R^N}.
$$
Now for $x\in Q_i$ we have
\begin{multline*}
[(\zeta_i\varphi)_{\R^N}*\rho_\e](x)-[(\zeta_i)_{\R^N} (\varphi_{\R^N}*\rho_\e)](x)\\
=\int_{Q_i}(\zeta_i)_{\R^N}(z) \varphi_{\R^N}(z) \rho_\e(x-z)\,dz-\int_{Q_i}(\zeta_i)_{\R^N}(x)\varphi_{\R^N}(z) \rho_\e(x-z)\,dz\\
=\int_{Q_i}[(\zeta_i)_{\R^N}(z)-(\zeta_i)_{\R^N}(x)] \varphi_{\R^N}(z)\rho_\e(x-z)\,dz
\end{multline*}
so that, for $\e$ small,
$$
\|(\zeta_i\varphi)_{\R^N}*\rho_\e-(\zeta_i)_{\R^N} (\varphi_{\R^N}*\rho_\e)\|_\infty\le C_i \e\|\varphi_{\R^N}\|_\infty,
$$
where $C_i$ is the Lipschitz constant of $(\zeta_i)_{\R^N}$.
We deduce that for some $C$ independent of $\e$
$$
\pscal{\mu*\rho_\e^y}{\varphi}\le \sum_{i=1}^k \int_{Q_i} (\zeta_i)_{\R^N} (\varphi_{\R^N}*\rho_\e) \,d\mu_{\R^N}+C\e \|\varphi\|_\infty
=\pscal{\mu}{(\varphi_{\R^N}*\rho_\e)_\ys}+C\e\|\varphi\|_\infty
$$
so that passing to the sup in $\varphi$ with $\|\varphi\|_\infty\le 1$ we get
$$
|\mu*\rho_\e^y|(\ys)\le |\mu|(\ys)+C\e.
$$
and the result follows by letting $\e\to0^+$.
\end{proof}

\section*{Statement and declarations:}

A.G. is a member of the Gruppo Nazionale per l'Analisi Matematica, la Probabilit\`a e le loro Applicazioni (GNAMPA) of the Istituto Nazionale di Alta Matematica (INdAM). 

\par  Data sharing not applicable to this article as no datasets were generated or analyzed during the current study.

\par We thank both referees for their insightful comments which contributed to a better version of this work.

\bibliographystyle{plain}

\end{document}